\numberwithin{equation}{section}
\theoremstyle{plain}
\newtheorem{theorem}{Theorem}[section]
\newtheorem*{theorem*}{Theorem}
\newtheorem{lemma}{Lemma}[section]
\newtheorem{claim}{Claim}[section]
\newtheorem{conjecture}{Conjecture}[section]
\theoremstyle{definition}
\newtheorem*{definition*}{Definition}
\newtheorem{remark}{Remark}[section]
\begin{document}

\title{Estimates for the number of zeros of shifted combinations of completed Dirichlet series}

\author{{Pedro Ribeiro}} 
\thanks{
{\textit{ Keywords}} :  {Hardy's Theorem; Bessel functions; Jacobi theta function; confluent hypergeometric function}

{\textit{2020 Mathematics Subject Classification} }: {Primary: 11E45, 11M41, 33C10, 33C15; Secondary: 30B50, 44A15}

Department of Mathematics, Faculty of Sciences of University of Porto, Rua do Campo Alegre,  687; 4169-007 Porto (Portugal). 

\,\,\,\,\,E-mail: pedromanelribeiro1812@gmail.com}
\date{}
 
\maketitle

\begin{abstract} In a previous paper \cite{RYCE}, Yakubovich and the author of this article proved that certain shifted combinations of completed Dirichlet series have infinitely many zeros on the critical line. Here we provide some lower bounds for the number of critical zeros of a subclass of shifted combinations.
\end{abstract}

\tableofcontents 

\pagenumbering{arabic}
\newpage

\section{Introduction and main results}

Let $\eta(s):=\pi^{-s/2}\Gamma(s/2)\,\zeta(s)$. A. Dixit, N. Robles,
A. Roy and A. Zaharescu \cite{DRRZ} proved the following theorem.

\paragraph*{Theorem A:}
\textit{Let $(c_{j})_{j\in\mathbb{N}}$ be a sequence of non-zeros real numbers
such that $\sum_{j=1}^{\infty}\,|c_{j}|<\infty$. Also, let $\left(\lambda_{j}\right)_{j\in\mathbb{N}}$
be a bounded sequence of distinct real numbers that attains its bounds.
Then the function
\[
F(s)=\sum_{j=1}^{\infty}c_{j}\,\eta\left(s+i\lambda_{j}\right):=\sum_{j=1}^{\infty}c_{j}\,\pi^{-\frac{s+i\lambda_{j}}{2}}\Gamma\left(\frac{s+i\lambda_{j}}{2}\right)\,\zeta\left(s+i\lambda_{j}\right)
\]
has infinitely many zeros on the critical line $\text{Re}(s)=\frac{1}{2}$.}
\\

See also the introduction of \cite{DRRZ} for an excellent survey on
results about zeros of certain shifts of the Riemann zeta function. Based on an integral representation of Jacobi's transformation formula
due to Dixit {[}\cite{Dixit_theta}, p. 374, eq. (1.13){]},
A. Dixit, R. Kumar, B. Maji and A. Zaharescu \cite{DKMZ} later generalized the
aforementioned result and proved the more general theorem.

\paragraph*{Theorem B:}
\textit{Let $(c_{j})_{j\in\mathbb{N}}$ and $(\lambda_{j})_{j\in\mathbb{N}}$
be as in Theorem A. Also, let $\mathscr{R}$ denote the region of
the complex plane defined by $\mathscr{R}:=\left\{ z\in\mathbb{C}\,:\,|\text{Re}(z)|<\sqrt{\frac{\pi}{2}},\,|\text{Im}(z)|<\sqrt{\frac{\pi}{2}}\right\} $.
Then, for any $z\in\mathscr{R}$, the function
\begin{equation}
F_{z}(s)=\sum_{j=1}^{\infty}c_{j}\,\eta\left(s+i\lambda_{j}\right)\,\left\{ _{1}F_{1}\left(\frac{1-(s+i\lambda_{j})}{2};\,\frac{1}{2};\,\frac{z^{2}}{4}\right)\,+\,_{1}F_{1}\left(\frac{1-(\overline{s}-i\lambda_{j})}{2};\,\frac{1}{2};\,\frac{\overline{z}^{2}}{4}\right)\right\} \label{def function theroem B intro}
\end{equation}
has infinitely many zeros on the critical line $\text{Re}(s)=\frac{1}{2}$.}

\bigskip{}

The proof of Theorem B employed a variant of Hardy's method of studying
the moments of the real function $\eta\left(\frac{1}{2}+it\right)$
{[}\cite{hardy_note}, \cite{titchmarsh_zetafunction}, Chapter X{]}, as well as the elegant
transformation formula  
\begin{align}
2x^{1/4}\psi(x,z)-x^{-1/4}e^{-z^{2}/4} & =2e^{-z^{2}/4}x^{-1/4}\,\psi\left(\frac{1}{x},\,iz\right)-x^{1/4}\nonumber \\
=\frac{1}{2\pi}\,\intop_{-\infty}^{\infty}\pi^{-\frac{1}{4}-\frac{it}{2}}\Gamma\left(\frac{1}{4}+\frac{it}{2}\right)\,\zeta\left(\frac{1}{2}+it\right) & \,_{1}F_{1}\left(\frac{1}{4}+\frac{it}{2};\,\frac{1}{2};\,-\frac{z^{2}}{4}\right)\,x^{-\frac{it}{2}}\,dt,\label{Dixit integral formula jacobi psi}
\end{align}
where
\begin{equation}
\psi(x,z):=\sum_{n=1}^{\infty}e^{-\pi n^{2}x}\cos\left(\sqrt{\pi x}\,n\,z\right),\,\,\,\,\text{Re}(x)>0,\,\,\,z\in\mathbb{C}.\label{generalization of Jacobi formula with z intro}
\end{equation}
The first equality in (\ref{Dixit integral formula jacobi psi}) is,
of course, due to Jacobi but the integral representation appears for
the first time in {[}\cite{Dixit_theta}, p. 374, eq. (1.13){]}.

Together with Yakubovich \cite{RYCE}, the author of this paper extended
Theorem B to a class of Dirichlet series satisfying Hecke's functional
equation. A direct generalization of Theorem B concerns Dirichlet
series attached to positive powers of the $\theta-$function,
\begin{equation}
\theta(x):=\sum_{n=-\infty}^{\infty}e^{-\pi n^{2}x},\,\,\,\,\text{Re}(x)>0,\label{definition theta function in introduction}
\end{equation}
defined as follows: for any $\alpha>0$, $r_{\alpha}(n)$ is the arithmetical
function \cite{lagarias_reins} described by the expansion
\begin{equation}
\theta^{\alpha}(x)-1:=\sum_{n=1}^{\infty}r_{\alpha}(n)\,e^{-\pi nx},\,\,\,\,\,\text{Re}(x)>0.\label{r alpha definition iiiintro}
\end{equation}
For $\text{Re}(s)$ sufficiently large, one may consider the Dirichlet
series
\begin{equation}
\zeta_{\alpha}(s):=\sum_{n=1}^{\infty}\frac{r_{\alpha}(n)}{n^{s}},\label{zeta alpha definition intro}
\end{equation}
and motivate its study through the transformation formula for $\theta(x)$.  When $\alpha=k\in\mathbb{N}$,
it is effortless to see that $r_{\alpha}(n)$ reduces to the arithmetical
function counting the number of representations of $n$ as a sum of
$k$ squares. Also, when $\alpha=1$, $\zeta_{1}(s)$ reduces to $2\zeta(2s)$.
Like $\zeta_{k}(s)$ and $\zeta(2s)$, $\zeta_{\alpha}(s)$ satisfies
Hecke's functional equation
\begin{equation}
\eta_{\alpha}(s):=\pi^{-s}\Gamma(s)\,\zeta_{\alpha}(s)=\pi^{-\left(\frac{\alpha}{2}-s\right)}\Gamma\left(\frac{\alpha}{2}-s\right)\,\zeta_{\alpha}\left(\frac{\alpha}{2}-s\right):=\eta_{\alpha}\left(\frac{\alpha}{2}-s\right),\label{completed Dirichlet series definition}
\end{equation}
from which it is possible to conclude that $\eta_{\alpha}(s)$ is
real on the critical line $\text{Re}(s)=\frac{\alpha}{4}$. The extension
of Theorem B to the class of zeta functions $\zeta_{\alpha}(s)$ is
described by the following result (cf. {[}\cite{RYCE}, p. 6, Theorem
1.1{]}).

\paragraph*{Theorem C:}
\textit{Let $(c_{j})_{j\in\mathbb{N}}$ be a sequence of real numbers such
that $\sum_{j}|c_{j}|<\infty$ and $\left(\lambda_{j}\right)_{j\in\mathbb{N}}$
be a bounded sequence of real numbers attaining its bounds. Then,
for any $z$ satisfying the condition
\begin{equation}
z\in\mathscr{D}_{\alpha}:=\left\{ z\in\mathbb{C}\,:\,|\text{Re}(z)|<\sqrt{\frac{\pi\alpha}{2}},\,|\text{Im}(z)|<\sqrt{\frac{\pi\alpha}{2}}\right\} ,\label{condition rectangle z zeta alpha}
\end{equation}
the function
\begin{equation}
F_{z,\alpha}(s):=\sum_{j=1}^{\infty}c_{j}\,\pi^{-(s+i\lambda_{j})}\Gamma\left(s+i\lambda_{j}\right)\,\zeta_{\alpha}(s+i\lambda_{j})\,\left\{ _{1}F_{1}\left(\frac{\alpha}{2}-s-i\lambda_{j};\,\frac{\alpha}{2};\,\frac{z^{2}}{4}\right)+\,_{1}F_{1}\left(\frac{\alpha}{2}-\overline{s}+i\lambda_{j};\,\frac{\alpha}{2};\,\frac{\overline{z}^{2}}{4}\right)\right\} \label{function defining coooombinations-1}
\end{equation}
has infinitely many zeros on the critical line $\text{Re}(s)=\frac{\alpha}{4}$.}
\\

Note that Theorem B is a particular case of Theorem C (obtained when $\alpha=1$). One of the main ingredients in the proof of Theorem C is a generalization
of Dixit's integral formula (\ref{Dixit integral formula jacobi psi})
obtained in {[}\cite{RYCE}, p. 30, eq. (2.60){]}. This identity takes the form
\begin{align}
x^{\frac{\alpha}{4}}\psi_{\alpha}(x,z)-x^{-\alpha/4}\,e^{-\frac{z^{2}}{4}} & =e^{-\frac{z^{2}}{4}}x^{-\alpha/4}\,\psi_{\alpha}\left(\frac{1}{x},\,iz\right)-x^{\alpha/4}\nonumber \\
=\frac{1}{2\pi}\,\intop_{-\infty}^{\infty}\eta_{\alpha}\left(\frac{\alpha}{4}+it\right) & \,_{1}F_{1}\left(\frac{\alpha}{4}+it;\,\frac{\alpha}{2};\,-\frac{z^{2}}{4}\right)\,x^{-it}\,dt,\label{equation in the setting of zeta alpha}
\end{align}
where $\eta_{\alpha}(s)$ is the completed Dirichlet series (\ref{completed Dirichlet series definition})
and $\psi_{\alpha}(x,z)$ is the generalization of Jacobi's $\psi-$function,
\begin{equation}
\psi_{\alpha}(x,z)=2^{\frac{\alpha}{2}-1}\,\Gamma\left(\frac{\alpha}{2}\right)\,\left(\sqrt{\pi x}\,z\right)^{1-\frac{\alpha}{2}}\,\sum_{n=1}^{\infty}r_{\alpha}(n)\,n^{\frac{1}{2}-\frac{\alpha}{4}}\,e^{-\pi n\,x}\,J_{\frac{\alpha}{2}-1}(\sqrt{\pi\,n\,x}\,z),\,\,\,\,\,\text{Re}(x)>0,\,\,z\in\mathbb{C}.\label{Definition Jacobi final version}
\end{equation}

The transformation formula (\ref{equation in the setting of zeta alpha})
can be also taken in a general setting, with $\zeta_{\alpha}(s)$
being replaced by any Dirichlet series satisfying Hecke's functional equation.
For example, let $f(\tau)$ be a holomorphic cusp form with weight
$k\geq12$ for the full modular group whose Fourier expansion is given
by
\begin{equation}
f(\tau)=\sum_{n=1}^{\infty}a_{f}(n)\,e^{2\pi in\tau},\,\,\,\,\,\text{Im}(\tau)>0.\label{Fourier expansion cusp fooorms-1}
\end{equation}
If we construct the Dirichlet series associated to $f(\tau)$,
\begin{equation}
L(s,f)=\sum_{n=1}^{\infty}\frac{a_{f}(n)}{n^{s}},\,\,\,\,\,\,\text{Re}(s)>\frac{k+1}{2},\label{cusp form holomorphic def}
\end{equation}
we know that $L(s,f)$ can be analytically continued to an entire
function obeying Hecke's functional equation
\begin{equation}
\eta_{f}(s):=(2\pi)^{-s}\Gamma(s)\,L(s,f)=(-1)^{k/2}\,\left(2\pi\right)^{-(k-s)}\,\Gamma(k-s)\,L(k-s,f):=(-1)^{k/2}\eta_{f}\left(k-s\right).\label{functional equation cusp form}
\end{equation}

Analogously to $\psi_{\alpha}(x,z)$, which generalizes Jacobi's $\psi-$function,
we can construct a generalization of the cusp form $f(\tau)$ in the
form\footnote{Note that, when $z=0$, $\psi_{f}(x,0)=f(ix)$}
\begin{equation}
\psi_{f}(x,z):=(k-1)!\,\left(\sqrt{\frac{\pi x}{2}}\,z\right)^{1-k}\sum_{n=1}^{\infty}a_{f}(n)n^{\frac{1-k}{2}}\,e^{-2\pi n\,x}\,J_{k-1}\left(\sqrt{2\pi n\,x}\,z\right),\,\,\text{Re}(x)>0,\,\,z\in\mathbb{C}.\label{Jacobi theta function cusp forms}
\end{equation}
Just like $\psi_{\alpha}(x,z)$, $\psi_{f}(x,z)$ obeys to the following
transformation
\begin{align}
x^{\frac{k}{2}}\psi_{f}(x,z) & =(-1)^{k/2}\,e^{-\frac{z^{2}}{4}}x^{-k/2}\,\psi_{f}\left(\frac{1}{x},\,iz\right)\nonumber \\
=\frac{1}{2\pi}\,\intop_{-\infty}^{\infty}\left(2\pi\right)^{-\frac{k}{2}-it}\Gamma\left(\frac{k}{2}+it\right)\,L\left(\frac{k}{2}+it,f\right) & \,_{1}F_{1}\left(\frac{k}{2}+it;\,k;\,-\frac{z^{2}}{4}\right)\,x^{-it}\,dt\nonumber \\
:=\frac{1}{2\pi}\,\intop_{-\infty}^{\infty}\eta_{f}\left(\frac{k}{2}+it\right)\,_{1}F_{1} & \left(\frac{k}{2}+it;\,k;\,-\frac{z^{2}}{4}\right)\,x^{-it}dt,\label{Reflection formula for cusp forms}
\end{align}
where $\eta_{f}(s)$ is the completed Dirichlet series (\ref{functional equation cusp form}).
Using (\ref{Reflection formula for cusp forms}), we have been able
to establish the following Theorem {[}\cite{RYCE}, Theorem 1.4{]}.

\paragraph*{Theorem D:}
\textit{Let $f(\tau)$ be a cusp form of weight $k$ for the full modular
group with real Fourier coefficients $a_{f}(n)$. Consider the Dirichlet
series,
\begin{equation}
L(s,f)=\sum_{n=1}^{\infty}\frac{a_{f}(n)}{n^{s}},\,\,\,\,\,\text{Re}(s)>\frac{k+1}{2}.\label{L series cusp form p q-2}
\end{equation}
If $(c_{j})_{j\in\mathbb{N}}$ is a sequence of non-zero real numbers
such that $\sum_{j=1}^{\infty}\,|c_{j}|<\infty$, $\left(\lambda_{j}\right)_{j\in\mathbb{N}}$
is a bounded sequence of distinct real numbers\footnote{In the statement of Theorem 1.4. of \cite{RYCE} we require that the
sequence $\left(\lambda_{j}\right)_{j\in\mathbb{N}}$ attains its
bounds. However, in view of Remark 5.4 of \cite{RYCE}, this condition
is not necessary when we are working with combinations involving entire
Dirichlet series.} and $z$ satisfies the condition
\begin{equation}
z\in\mathscr{D}:=\left\{ z\in\mathbb{C}\,:\,|\text{Re}(z)|<2\sqrt{\pi},\,\,|\text{Im}(z)|<2\sqrt{\pi}\right\} ,\label{condition cusp case-1}
\end{equation}
then the function
\begin{equation}
G_{z,f}\left(s\right):=\sum_{j=1}^{\infty}c_{j}\,\left(2\pi\right)^{-s-i\lambda_{j}}\Gamma(s+i\lambda_{j})\,L\left(s+i\lambda_{j},f\right)\,\left\{ _{1}F_{1}\left(k-s-i\lambda_{j};\,k;\,\frac{z^{2}}{4}\right)+\,_{1}F_{1}\left(k-\overline{s}+i\lambda_{j};\,k;\,\frac{\overline{z}^{2}}{4}\right)\right\} \label{function cusp case-2}
\end{equation}
has infinitely many zeros at the critical line $\text{Re}(s)=\frac{k}{2}$.}
\\

Our main goal in this paper is to prove quantitative analogues of
Theorems C and D above for a subclass of shifted combinations. This is, when we impose some further restrictions
on $(\lambda_{j})_{j\in\mathbb{N}}$, we aim to find lower bounds
for the number of critical zeros of the functions $F_{z,\alpha}(s)$
and $G_{z,f}(s).$ We need to introduce the following assumptions:

\begin{enumerate}

\item{\textbf{Assumpion 1:}\label{assumption 1} If the shift $s\rightarrow s+i\lambda_{j}$ is introduced, then the
symmetric shift $s\rightarrow s-i\lambda_{j}$ needs to be introduced
as well and having the same weight $c_{j}$. Concerning Theorem C,
for example, this means that if the term
\[
c_{j}\,\eta_{\alpha}\left(s+i\lambda_{j}\right)\text{Re}\left\{ _{1}F_{1}\left(\frac{\alpha}{2}-s-i\lambda_{j};\,\frac{\alpha}{2};\,\frac{z^{2}}{4}\right)\right\} 
\]
belongs to the combination (\ref{function defining coooombinations-1}),
then the term
\[
c_{j}\,\eta_{\alpha}\left(s-i\lambda_{j}\right)\text{Re}\left\{ _{1}F_{1}\left(\frac{\alpha}{2}-s+i\lambda_{j};\,\frac{\alpha}{2};\,\frac{z^{2}}{4}\right)\right\} 
\]
also belongs to it. This assumption essentially says that we are enlarging
the sequences $\left(\lambda_{j}\right)_{j\in\mathbb{N}}$ in (\ref{function defining coooombinations-1})
and (\ref{function cusp case-2}) in such a way that each element
has a symmetric pair contained in it. Therefore, we can write it as a sequence over $\mathbb{Z}\setminus\{0\}$ in the form
$\left(\lambda_{j}\right)_{j\in\mathbb{Z}\setminus\{0\}}=\left(\lambda_{j}\right)_{j\in\mathbb{N}}\cup\left(-\lambda_{j}\right)_{j\in\mathbb{N}}$,
once we take the convention that $\lambda_{-j}:=-\lambda_{j}$. The same can be done to $(c_{j})_{j\in \mathbb{N}}$ under the convention $c_{-j}:=c_{j}$.
}

\item{\textbf{Assumption 2:}\label{assumption 2} In each one of the cases (\ref{function defining coooombinations-1}) and (\ref{function cusp case-2}), the parameter $z$ in the hypergeometric functions will
be a real number belonging to an interval contained in the regions
(\ref{condition rectangle z zeta alpha}) and (\ref{condition cusp case-1}).}

\end{enumerate}

Under these assumptions, we can write the shifted combinations (\ref{function defining coooombinations-1})
and (\ref{function cusp case-2}) in the symmetric forms
\begin{equation}
\tilde{F}_{z,\alpha}(s):=\sum_{j\neq0}c_{j}\,\pi^{-(s+i\lambda_{j})}\Gamma\left(s+i\lambda_{j}\right)\,\zeta_{\alpha}\left(s+i\lambda_{j}\right)\,\left\{ _{1}F_{1}\left(\frac{\alpha}{2}-s-i\lambda_{j};\,\frac{\alpha}{2};\,\frac{z^{2}}{4}\right)+\,_{1}F_{1}\left(\frac{\alpha}{2}-\overline{s}+i\lambda_{j};\,\frac{\alpha}{2};\,\frac{z^{2}}{4}\right)\right\} \label{symmetric function F}
\end{equation}
and
\begin{equation}
\tilde{G}_{z,f}(s):=\sum_{j\neq0}c_{j}\,\left(2\pi\right)^{-s-i\lambda_{j}}\Gamma(s+i\lambda_{j})\,L\left(s+i\lambda_{j},f\right)\,\left\{ _{1}F_{1}\left(k-s-i\lambda_{j};\,k;\,\frac{z^{2}}{4}\right)+\,_{1}F_{1}\left(k-\overline{s}+i\lambda_{j};\,k;\,\frac{z^{2}}{4}\right)\right\} .\label{symmetric function G}
\end{equation}
In both cases, the sum is taken over $\mathbb{Z}\setminus\{0\}$ and
we are under the conventions (see Assumption \ref{assumption 1}) $c_{-j}:=c_{j}$ and
$\lambda_{-j}:=-\lambda_{j}$.

Note also that $\tilde{F}_{z,\alpha}\left(\frac{\alpha}{4}+it\right)$
is a real-valued and even function of $t\in\mathbb{R}$. These properties
come immediately from the functional equation (\ref{completed Dirichlet series definition})
and assumptions \ref{assumption 1} and \ref{assumption 2}. In the same lines one can check that, when
the coefficients $a_{f}(n)$ are real, $i^{-\frac{k}{2}}\tilde{G}_{z,f}\left(\frac{k}{2}+it\right)$
is a real function of $t$. Furthermore, it is an even function if $k\equiv 0 \mod4$ and it is odd if $k\equiv 2 \mod 4$. 
Although the additional assumptions given above restrict some of the
general aspects of Theorems C and D, they prove to be very helpful
in deriving some estimates for the number of zeros of the functions
(\ref{symmetric function F}) and (\ref{symmetric function G}). We
begin with a theorem about the number of zeros of $\tilde{F}_{z,\alpha}\left(\frac{\alpha}{4}+it\right)$.
\\

\begin{theorem}\label{theorem 1.1}
Let $(c_{j})_{j\in\mathbb{N}}$ be a sequence of real numbers such
that $\sum_{j=1}^{\infty}|c_{j}|<\infty$ and $(\lambda_{j})_{j\in\mathbb{N}}$
be a bounded sequence of real numbers attaining its bounds. Suppose that $(c_{j})_{j\in\mathbb{N}}$ and $(\lambda_{j})_{j\in\mathbb{N}}$ can be extended to $\mathbb{Z}\setminus\{0\}$ in the form $c_{-j}=c_{j}$ and $\lambda_{-j}=-\lambda_{j}$. Assume
also that $z\in\mathbb{R}$ satisfies the condition
\begin{equation}
z\in\left[-\frac{1}{6}\sqrt{\frac{\pi\alpha}{2}},\frac{1}{6}\sqrt{\frac{\pi\alpha}{2}}\right].\label{condition first rectangle}
\end{equation}
Moreover, let $N_{\alpha,z}(T)$ be the number of zeros written in the form $s=\frac{\alpha}{4}+it,\,\,\,0\leq t\leq T$,
of the function
\begin{equation}
\tilde{F}_{z,\alpha}(s):=\sum_{j\neq0}c_{j}\,\pi^{-(s+i\lambda_{j})}\Gamma\left(s+i\lambda_{j}\right)\,\zeta_{\alpha}\left(s+i\lambda_{j}\right)\,\left\{ _{1}F_{1}\left(\frac{\alpha}{2}-s-i\lambda_{j};\,\frac{\alpha}{2};\,\frac{z^{2}}{4}\right)+\,_{1}F_{1}\left(\frac{\alpha}{2}-\overline{s}+i\lambda_{j};\,\frac{\alpha}{2};\,\frac{z^{2}}{4}\right)\right\} .\label{function defining coooombinations}
\end{equation}

Then there exists some $c>0$ such that
\begin{equation}
\liminf_{T\rightarrow\infty}\,\frac{N_{\alpha,z}(T)}{\sqrt{T}/\log(T)}\geq c.\label{result proved for zeta alpha}
\end{equation}
\end{theorem}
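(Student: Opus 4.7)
\medskip
\noindent\textbf{Proof plan.} Set $Z(t):=\tilde{F}_{z,\alpha}\!\bigl(\tfrac{\alpha}{4}+it\bigr)$; by the functional equation (\ref{completed Dirichlet series definition}) together with Assumptions \ref{assumption 1} and \ref{assumption 2}, $Z$ is a real-valued even function of $t$. The plan is to carry out a quantitative refinement of Hardy's moment method: produce a lower bound for a Mellin-type integral of $Z$ from the transformation formula (\ref{equation in the setting of zeta alpha}), pair it with an $L^{\infty}$ estimate $|Z(t)|\ll\log|t|$ on the critical line, and convert the two via a sign-change argument into the bound (\ref{result proved for zeta alpha}).

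The first step is to lift (\ref{equation in the setting of zeta alpha}) to the shifted setting. For each $j\in\mathbb{Z}\setminus\{0\}$, I would replace $x$ by $xe^{-\lambda_j}$ in (\ref{equation in the setting of zeta alpha}), multiply by $c_j$ and sum; the symmetries $c_{-j}=c_j$, $\lambda_{-j}=-\lambda_j$ collapse the two hypergeometric factors inside (\ref{function defining coooombinations}) into a single real integrand, so that Mellin inversion yields an identity of the form
\begin{equation*}
\frac{1}{2\pi}\int_{-\infty}^{\infty} Z(t)\,x^{-it}\,dt \;=\; \Psi_{z,\alpha}(x),
\end{equation*}
where $\Psi_{z,\alpha}(x)$ is an explicit combination of $\psi_\alpha(xe^{-\lambda_j},z)$, $\psi_\alpha(x^{-1}e^{\lambda_j},iz)$ and elementary $x^{\pm\alpha/4}$ corrections. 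The hypothesis (\ref{condition first rectangle}) ensures absolute convergence of every Bessel-type series defining $\psi_\alpha$ with a comfortable margin: using $|J_{\alpha/2-1}(w)|\ll e^{|\mathrm{Im}\,w|}$ and $|r_\alpha(n)|\ll n^{\alpha/2-1+\varepsilon}$ in (\ref{Definition Jacobi final version}), the factor of $\tfrac16$ in (\ref{condition first rectangle}) leaves enough room to isolate the leading $n=1$ term of $\Psi_{z,\alpha}(x)$ for $x$ in a short window around a suitable reference point and show that this main term does not vanish. In parallel, Stirling's formula together with the convexity bound for $\zeta_\alpha$ on the critical line, and the cancellation between $\Gamma$ and ${}_1F_1$ for $z$ satisfying (\ref{condition first rectangle}), yields $|Z(t)| \ll \log|t|$ for $|t|\ge 1$ after summing over $j$ using $\sum_j|c_j|<\infty$.

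With these two ingredients in hand, the proof is completed by a Hardy--Littlewood-type sign-change lemma: if $Z$ admits at most $N$ sign changes in $[-T,T]$ and $|Z|\le M$ there, then a smoothed version of $\int_{-T}^{T} Z(t)\,x^{-it}\,dt$ can be bounded from above by $CNM$ times a factor depending on the support of the smoothing kernel. Inserting the Mellin identity above, the left-hand side is $\gg\sqrt{T}$ for the chosen $x$ (once the truncation error from replacing $\mathbb{R}$ by $[-T,T]$ is absorbed), whilst the right-hand side is $\ll N\log T$; this rearranges to $N\gg\sqrt{T}/\log T$ and gives (\ref{result proved for zeta alpha}). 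The principal obstacle will be the truncation step: one must control the tail $\int_{|t|>T}Z(t)x^{-it}\,dt$ uniformly in the shifts $\lambda_j$ (a non-trivial matter given that only $\sum|c_j|<\infty$ and $(\lambda_j)$ bounded are at our disposal) and verify that the chosen reference value of $x$ produces a strictly non-zero main contribution to $\Psi_{z,\alpha}(x)$ for every admissible $z$, which is precisely where the narrow range (\ref{condition first rectangle}) becomes essential.
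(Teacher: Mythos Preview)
Your plan contains a fundamental error at the $L^{\infty}$ step. The function $Z(t)=\tilde F_{z,\alpha}(\tfrac{\alpha}{4}+it)$ does \emph{not} satisfy $|Z(t)|\ll\log|t|$: since $\eta_\alpha(s)=\pi^{-s}\Gamma(s)\zeta_\alpha(s)$ carries the full Gamma factor, Stirling's formula gives $|\eta_\alpha(\tfrac{\alpha}{4}+it)|\ll|t|^{A(\alpha)}e^{-\pi|t|/2}$, while by (\ref{bound for confluent hypergeometric useful Rcomb}) the ${}_1F_1$ factor contributes only $e^{|z|\sqrt{|t|}}$, not $e^{\pi|t|/2}$. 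There is no cancellation: $Z(t)$ decays exponentially (see (\ref{estimate simple-1})). Consequently $\int_{\mathbb R}Z(t)x^{-it}\,dt$ is absolutely convergent and bounded, so it cannot be $\gg\sqrt T$, and the comparison ``$\sqrt T\ll N\log T$'' collapses. In addition, the ``sign-change lemma'' you quote (at most $N$ sign changes and $|Z|\le M$ implies the integral is $\ll CNM$) is not a valid statement in this form; the Hardy--Littlewood mechanism compares $\int|Z|$ to $|\int Z|$ on intervals without sign changes, which is a different inequality and requires $Z$ to have polynomial, not exponential, decay to yield anything.

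The paper's argument is structurally different. It uses Fekete's variant of Fej\'er's lemma (Lemma~\ref{Fejer lemma}): the number of sign changes of a continuous function on $[0,a]$ is at least the number of sign changes of any sequence of its moments $\int_0^a f(t)t^{p_n-1}\,dt$. One therefore computes the \emph{moment sequence}
\[
\mathcal I_{2p}(u)=\int_0^\infty t^{2p}Z(t)\cosh\Bigl(\bigl(\tfrac{\pi}{2}-2u\bigr)t\Bigr)\,dt
\]
by taking $x=e^{2i\omega}$ in (\ref{equation in the setting of zeta alpha}) and differentiating $2p$ times in $\omega$ (Lemma~\ref{lemma 2.4}). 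The key analytic input is a uniform bound on the high derivatives of $1+\psi_\alpha(ie^{-2iu},z)$ (Lemma~\ref{lemma 2.2}), obtained via Cauchy's formula on a small circle and the summation formula (\ref{final formula for 1f1 theorem-1}); this is precisely where the factor $1/6$ in (\ref{condition first rectangle}) is needed. After this, the dominant contribution to $\mathcal I_{2p}(u)$ comes from the extremal shift $\lambda_M$ and equals a nonzero constant times an explicit oscillatory sequence $\mathcal G_p(\theta_M)$. One then constructs an integer sequence $(q_n)$ along which $\mathcal G_{q_n}(\theta_M)$ alternates sign (via equidistribution if $\theta_M/\pi\notin\mathbb Q$, explicitly otherwise). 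Fej\'er's lemma converts these sign changes of moments into sign changes of $Z$; the $\sqrt T/\log T$ arises because one must truncate the integral at height $\asymp q_N^2\log^2 q_N$ to control the tail, and $q_N\asymp N$.
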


\bigskip{}

The quantitative estimate (\ref{result proved for zeta alpha}) is
somewhat general and it seems very difficult to improve on the power
of $T$ using the same method. In the final section of this paper,
we make some conjectures regarding a possible improvement. When the shifted
combination is trivial and reduces to the term $\lambda_{1}=0$, our
estimate (\ref{result proved for zeta alpha}) extends, for all $\alpha>0$,
a result of the author and Yakubovich {[}\cite{rysc_I}, Corollary 3.5{]}
(cf. Remark \ref{remark 1.1} below).

Our proof of Theorem \ref{theorem 1.1} employs a method developed by Fekete \cite{fekete_zeros},
which was essentially inspired by a lemma of Fej\'er \cite{fejer}. See
{[}\cite{titchmarsh_zetafunction}, p. 259{]} for a clear explanation of Fekete's
method, as well as an interesting paper by Berlowitz which uses the same idea {[}\cite{berlowitz}, p. 206, Lemma 2{]}. However, in order
to apply Fej\'er's lemma, it will be crucial in our argument to use the fact
that $\zeta_{\alpha}(s)$ has a simple pole located at $s=\frac{\alpha}{2}$.

Since $L(s,f)$ is an entire function of $s$, a quantitative analogue
of Theorem D will have to use a different idea. In any case, we have
been able to establish the following result.

\begin{theorem} \label{theorem 1.2}

Let $f(\tau)$ be a cusp form of weight $k$ for the full modular
group with real Fourier coefficients $a_{f}(n)$ and consider the
Dirichlet series, $L(s,f)$, attached to it.  Let $(c_{j})_{j\in\mathbb{N}}$
be a sequence of non-zero real numbers such that $\sum_{j=1}^{\infty}\,|c_{j}|<\infty$
and let $(\lambda_{j})_{j\in\mathbb{N}}$ be a bounded sequence of
distinct real numbers. Suppose that $(c_{j})_{j\in\mathbb{N}}$ and $(\lambda_{j})_{j\in\mathbb{N}}$ can be extended to $\mathbb{Z}\setminus\{0\}$ in the form $c_{-j}=c_{j}$ and $\lambda_{-j}=-\lambda_{j}$.  Assume also that $z$ is a real number satisfying
the condition
\begin{equation}
z\in\left[-\frac{\sqrt{\pi}}{3},\frac{\sqrt{\pi}}{3}\right].\label{interval z in cusp form case}
\end{equation}
If $N_{f,z}(T)$ denotes the number of zeros written in the form $s=\frac{k}{2}+it,\,\,0\leq t\leq T$,
of the function
\begin{equation}
\tilde{G}_{z,f}(s):=\sum_{j\neq0}c_{j}\,\left(2\pi\right)^{-s-i\lambda_{j}}\Gamma(s+i\lambda_{j})\,L\left(s+i\lambda_{j},f\right)\,\left\{ _{1}F_{1}\left(k-s-i\lambda_{j};\,k;\,\frac{z^{2}}{4}\right)+\,_{1}F_{1}\left(k-\overline{s}+i\lambda_{j};\,k;\,\frac{z^{2}}{4}\right)\right\} ,\label{function cusp case}
\end{equation}
then there exists some $d>0$ such that
\begin{equation}
\limsup_{T\rightarrow\infty}\frac{N_{f,z}(T)}{\sqrt{T}}\geq d.\label{lim sup estimate}
\end{equation}
\end{theorem}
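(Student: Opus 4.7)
Since $L(s,f)$ is entire, the Fekete--Fej\'er approach of Theorem \ref{theorem 1.1}---which relied on the simple pole of $\zeta_\alpha$ at $s=\alpha/2$---cannot be used directly here. The plan is to replace it with a Fej\'er-type comparison based on the Mellin-inversion structure underlying the transformation formula (\ref{Reflection formula for cusp forms}), yielding the desired lower bound only along a suitable subsequence of heights $T_{n}\to\infty$ (which is precisely why the theorem asserts a $\limsup$ rather than a $\liminf$).

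First, set $Z(t):=i^{-k/2}\tilde{G}_{z,f}(k/2+it)$, which is real-valued by Assumptions 1--2 and the functional equation (\ref{functional equation cusp form}). Replacing $z$ by $iz$ in (\ref{Reflection formula for cusp forms}) to match the sign of $z^{2}/4$ appearing in the definition of $\tilde{G}_{z,f}$, summing over the shifts $\lambda_{j}$, and invoking the conventions $c_{-j}=c_{j}$, $\lambda_{-j}=-\lambda_{j}$, I would derive the integral representation
\begin{equation*}
\frac{1}{2\pi}\int_{-\infty}^{\infty}\tilde{G}_{z,f}(k/2+it)\,x^{-it}\,dt = x^{k/2}\Bigl[\,e^{z^{2}/4}\,C(-\log x)\,\psi_{f}(x,z)+C(\log x)\,\psi_{f}(x,iz)\,\Bigr],
\end{equation*}
where $C(y):=\sum_{j\neq 0}c_{j}\,e^{i\lambda_{j}y}$. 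Thanks to the exponential decay of $\psi_{f}(e^{y},\cdot)$ as $y\to+\infty$ and the transformation (\ref{Reflection formula for cusp forms}) as $y\to-\infty$, the right-hand side is bounded and continuous on $\mathbb{R}$; the restriction $z\in[-\sqrt{\pi}/3,\sqrt{\pi}/3]$ from (\ref{interval z in cusp form case}) should be what guarantees that it remains bounded away from zero at some $y_{0}\in\mathbb{R}$.

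The Fej\'er-type comparison then proceeds along the lines of Berlowitz {[}\cite{berlowitz}, p.~206, Lemma 2{]}. I would pair $Z$ against a truncated oscillatory test function $\phi_{T}$ localized in $[-T,T]$ whose Fourier transform concentrates near $y_{0}$; the integral representation above gives the uniform upper bound $\left|\int_{-T}^{T}Z(t)\phi_{T}(t)\,dt\right|\ll 1$, independent of $T$. Along an appropriately chosen subsequence $T_{n}\to\infty$, a Plancherel-type mean-square computation using the same representation should supply the matching lower bound $\int_{-T_{n}}^{T_{n}}|Z(t)\phi_{T_{n}}(t)|\,dt\gg\sqrt{T_{n}}$. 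If $\tilde{G}_{z,f}$ had fewer than $d\sqrt{T_{n}}$ sign changes on $[0,T_{n}]$, splitting $[-T_{n},T_{n}]$ into intervals of constant sign would bound the absolute value of the pairing from below by a constant multiple of $\sqrt{T_{n}}$, contradicting the uniform upper bound for $d$ sufficiently small.

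The hardest step will be the lower $L^{1}$-mean bound for $Z$ on $[-T_{n},T_{n}]$: in Theorem \ref{theorem 1.1} this fell out painlessly from the pole of $\zeta_\alpha$ at $s=\alpha/2$, whereas here $L(s,f)$ is entire and the main-term contribution must be extracted instead from the $\psi_{f}$-side of the representation above. The hypothesis $z\in[-\sqrt{\pi}/3,\sqrt{\pi}/3]$ is what prevents the $\psi_{f}$-factors from producing unwanted cancellations around $y_{0}$, and the necessity of passing to a subsequence reflects that, in the absence of a pole, the main term can only be isolated at appropriately chosen scales rather than at every height $T$.
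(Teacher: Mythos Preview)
Your sketch diverges substantially from the paper's proof and, as written, has a genuine gap at the sign-change counting step. The paper does \emph{not} use a Fej\'er/Berlowitz comparison with oscillatory test functions at all; it follows de la Vall\'ee Poussin's method \cite{Poussin_zeros, RHALF}. One assumes, for contradiction, that the odd-order critical zeros $\tfrac{k}{2}+i\tau_{n}$ satisfy $\tau_{n}\geq hn^{2}$ for all large $n$ and every $h>0$. One then forms the canonical product $\varphi_{f,z}(y)=\prod_{\ell}(1-y^{2}/\tau_{\ell}^{2})=\sum_{\ell}(-1)^{\ell}a_{2\ell}y^{2\ell}$, so that $i^{-k/2}\tilde G_{z,f}(\tfrac{k}{2}+it)\,\varphi_{f,z}(t)$ has constant sign and hence $|Q_{f}(u)|:=\bigl|\int_{0}^{\infty} i^{-k/2}\tilde G_{z,f}(\tfrac{k}{2}+it)\varphi_{f,z}(t)\cosh((\tfrac{\pi}{2}-2u)t)\,dt\bigr|$ is positive decreasing in $u$. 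The contradiction hypothesis forces the coefficient bound $a_{2\ell}\ll h^{-2\ell}\pi^{4\ell}/(4\ell+2)!$; expanding $\varphi_{f,z}$ in powers of $t$, applying the moment formula of Lemma~\ref{lemma 4.4} term-by-term, and invoking the uniform derivative bound of Lemmas~\ref{lemma 4.2}--\ref{lemma 4.3} (this is exactly where the restriction $|z|\leq\sqrt{\pi}/3$ enters, via the estimate $|\psi_{f}(ie^{-2iw},z)|\ll u^{-k}e^{-A/u}$), one obtains $|Q_{f}(u)|\ll u^{-k}\exp\bigl(-(A-32\pi^{2}/h)/u\bigr)\to 0$ as $u\to 0^{+}$ once $h>32\pi^{2}/A$, contradicting monotonicity.

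The specific gap in your proposal is the final implication. Having a \emph{single} test function $\phi_{T}$ with $\bigl|\int Z\phi_{T}\bigr|\ll 1$ and $\int|Z\phi_{T}|\gg\sqrt{T}$ does not by itself produce $\gg\sqrt{T}$ sign changes of $Z$: splitting $[-T,T]$ into the intervals where $Z$ keeps a fixed sign only gives $\int|Z\phi_{T}|\leq\sum_{i}\bigl|\int_{I_{i}}Z\phi_{T}\bigr|$, and you have no uniform bound on the individual pieces $\bigl|\int_{I_{i}}Z\phi_{T}\bigr|$ (your $O(1)$ bound is only for the full integral). To convert an $L^{1}$ vs.\ signed-integral gap into a sign-change count one needs either control over \emph{all} subinterval integrals, or---as in Fej\'er's lemma---a family of moments whose signs one can track. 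The paper's device circumvents this entirely: by multiplying by the Weierstrass product $\varphi_{f,z}$ one \emph{forces} constant sign, so the integral itself equals its $L^{1}$ norm, and the contradiction comes from showing this quantity is simultaneously positive decreasing and tends to $0$. Note also that your role for the hypothesis $|z|\leq\sqrt{\pi}/3$ (``bounded away from zero at some $y_{0}$'') is not how it is used; it is purely an input to the exponential-decay estimate on $\psi_{f}(ie^{-2iu},z)$ in Lemma~\ref{lemma 4.2}.
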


\bigskip{}

Our proof of Theorem \ref{theorem 1.2} uses a variant of a method due to de la Vall\'ee Poussin \cite{Poussin_zeros}. The author of this paper has recently used this method to establish a quantitative estimate for the number of
critical zeros of $L-$functions attached to half-integral weight cusp forms \cite{RHALF}.

This paper is organized as follows. In the next section we give the
necessary technical lemmas to establish Theorem \ref{theorem 1.1}. The most important
of these, Lemma \ref{lemma 2.2}, is obtained via a generalization of the theta
transformation formula obtained in \cite{RYCE}. Section \ref{proof theorem 1.1 section} is devoted
to a proof of Theorem \ref{theorem 1.1}. Next, we follow a similar structure in
sections \ref{lemmas cusp forms section} and \ref{proof of theorem 1.2 section}. Finally, we end this paper with some conjectures
concerning further extensions of the main results here presented.
Before moving on, we introduce a couple of remarks which describe some interesting particular cases of our Theorems \ref{theorem 1.1} and \ref{theorem 1.2}, as well as other theorems that can be proved via the same methods.

\begin{remark}\label{remark 1.1}
The intervals (\ref{condition first rectangle}) and (\ref{interval z in cusp form case})
are respectively contained in the regions (\ref{condition rectangle z zeta alpha})
and (\ref{condition cusp case-1}) of Theorems C and D. As we shall
see below, it is possible to enlarge the length of the interval (\ref{condition first rectangle})
by a more careful choice of the parameter $\lambda$ in the proof
of Lemma \ref{lemma 2.2} and a more precise inequality for $|I_{\nu}(x)|$. See
Remark \ref{remark 2.1} below.
\end{remark}

\begin{remark}\label{remark 1.2}
When $z=0$, we deduce from Theorem \ref{theorem 1.1} that the arbitrary shifted
combination given by
\[
\sum_{j\neq0}c_{j}\,\pi^{-(s+i\lambda_{j})}\Gamma\left(s+i\lambda_{j}\right)\,\zeta_{\alpha}\left(s+i\lambda_{j}\right)
\]
has $\gg T^{1/2}/\log(T)$ zeros of the form $s=\frac{\alpha}{4}+it,$
$0\leq t\leq T$. Since $\zeta_{1}(s):=2\zeta(2s)$, this shows that
the function
\begin{equation}
F(s):=\sum_{j\neq0}c_{j}\,\pi^{-\frac{1}{2}(s+i\lambda_{j})}\Gamma\left(\frac{s+i\lambda_{j}}{2}\right)\,\zeta\left(s+i\lambda_{j}\right)\label{zeta case F}
\end{equation}
has $\gg T^{1/2}/\log(T)$ on the critical line $\text{Re}(s)=\frac{1}{2}$. Also, when $z\neq0$
and $\alpha=1$, Theorem \ref{theorem 1.1} yields an extension of Theorem B above.
The generality of this result, with respect to the shifted combination
and the explicit estimate for the number of critical zeros, seems
to be unnoticed in the literature. We should remark that Selberg's
outstanding result about the positive proportion of combinations of
$L-$functions with degree one \cite{selberg_class} does not seem to cover the case
(\ref{zeta case F}), because we are considering shifted combinations
of $\eta(s)$.
\end{remark}

\begin{remark}\label{remark 1.3}
Using an inductive method involving generalized Epstein zeta functions
\cite{rysc_I}, Yakubovich and the author of this paper proved that, when
$\alpha>4$, there always exist $T_{0}(\alpha)$ and $c:=c(\alpha)>0$
such that, for any $T\geq T_{0}(\alpha)$, there is a zero $\rho=\frac{\alpha}{4}+i\gamma$
of $\zeta_{\alpha}(s)$ with $\gamma\in[T,T+cT^{1/2}\log(T)]$. See
{[}\cite{rysc_I}, Corollary 3.5{]} for details. In particular, if $N_{\alpha}(T):=\#\left\{ 0\leq t\leq T\,:\,\zeta_{\alpha}\left(\frac{\alpha}{4}+it\right)=0\right\} $,
then 
\begin{equation}
\liminf_{T\rightarrow\infty}\,\frac{N_{\alpha}(T)}{\sqrt{T}/\log(T)}\geq c^{\prime}>0,\,\,\,\,\,\alpha>4.\label{lim inf new range}
\end{equation}
Our Theorem \ref{theorem 1.1} above generalizes the estimate (\ref{lim inf new range})
to shifted combinations of $\zeta_{\alpha}(s)$. Moreover, when $z=0$
and $c_{1}=1$, $\lambda_{1}=0$, $c_{2}=c_{3}=...=0$, it actually
extends (\ref{lim inf new range}) also to the range $0<\alpha\leq4$.
As remarked in \cite{rysc_I}, when $\alpha$ is any integer greater than
$3$, it is known that $N_{\alpha}(0,T)\asymp T$ \cite{Siegel_Contributions}
and so, in this case, (\ref{lim inf new range}) does not say anything
new. However, for non-integral $\alpha$, this result seems to be novel. 

\end{remark}

\begin{remark}\label{remark 1.4}
Analogues of our Theorem \ref{theorem 1.1} can be proved when $\zeta_{\alpha}(s)$ is replaced by any of the meromorphic Dirichlet series mentioned in
\cite{RYCE}. For example, it is still valid if we replace $\zeta_{\alpha}(s)$
by an Epstein zeta function $\zeta(s,Q)$ attached to an integral
quadratic form $Q(m,n)=Am^{2}+Bmn+Cn^{2}$ such that $\sqrt{4AC-B^{2}}\equiv2\mod4$. Of course, the admissible region for $z$ in this case would have to depend on the discriminant of $Q$.

\end{remark}

\begin{remark}
A lower bound for $d$ appearing in (\ref{lim sup estimate}) can
be explicitly calculated. A simple numerical computation (see the
proof of Lemma \ref{lemma 4.2} below) gives the value $d=\frac{1}{36\pi}$ as
admissible for $d$. 

\end{remark}

\begin{remark}
Since the method of proof of Theorem \ref{theorem 1.2} works well for any entire
Dirichlet series, there is an analogue of this result for some Dirichlet
$L-$functions. See [\cite{RYCE}, Theorem 1.3] for details. Furthermore, we can use this method to prove (\ref{lim sup estimate}) for a general combination involving
shifts of $\zeta_{\alpha}(s)$. Since $s\left(s-\frac{\alpha}{2}\right)\eta_{\alpha}(s)$
is an entire function of $s$, then our proof of Theorem \ref{theorem 1.2} can be
applied to study the number of zeros of the function 
\begin{equation}
\sum_{j\neq0}c_{j}\left(s+i\lambda_{j}\right)\left(s+i\lambda_{j}-\frac{\alpha}{2}\right)\,\eta_{\alpha}\left(s+i\lambda_{j}\right)\,\left\{ _{1}F_{1}\left(\frac{\alpha}{2}-s-i\lambda_{j};\,\frac{\alpha}{2};\,\frac{z^{2}}{4}\right)+\,_{1}F_{1}\left(\frac{\alpha}{2}-\overline{s}+i\lambda_{j};\,\frac{\alpha}{2};\,\frac{{z}^{2}}{4}\right)\right\} ,\label{entire combination zeta alpha remark}
\end{equation}
whenever $z$ satisfies (\ref{condition first rectangle}). If $\tilde{N}_{\alpha,z}(T)$ denotes the number of zeros of the above combination that can be written in the form $s=\frac{\alpha}{4}+it,$ $0\leq t\leq T$, then the following estimate takes place
\begin{equation}
\limsup_{T\rightarrow\infty}\,\frac{\tilde{N}_{\alpha,z}(T)}{\sqrt{T}}\geq\tilde{c},\label{lim sup zeta alpha case}
\end{equation}
for some $\tilde{c}>0$. In particular, when we reduce the combination
(\ref{entire combination zeta alpha remark}) to the case where $\lambda_{1}=0$
and $c_{j}=0$ for any $|j|\geq2$, we see that, besides (\ref{lim inf new range}),
we also have $N_{\alpha}(T)=\Omega\left(T^{\frac{1}{2}}\right)$.
\end{remark}

\begin{remark}
Using the properties of the slash operator, it is possible to establish
Theorem \ref{theorem 1.2} when $f(z)$ is a cusp form of weight $k$ on a congruence subgroup $\Gamma_{0}(N)$ with $N$ being a perfect square. We have
to put the additional condition that $\left(f|W_{N}\right)(z)=\pm f(z)$,
where $W_{N}$ is the Fricke involution. The same method also works
for half-integral weight cusp forms in $\Gamma_{0}(4N)$, $N$ being
a perfect square.
\end{remark}

\section{Lemmas for the proof of Theorem \ref{theorem 1.1}}
We start by recalling Stirling's formula for the Gamma function,
\begin{equation}
\Gamma(\sigma+it)=(2\pi)^{\frac{1}{2}}\,t^{\sigma+it-\frac{1}{2}}\,e^{-\frac{\pi t}{2}-it+\frac{i\pi}{2}(\sigma-\frac{1}{2})}\left(1+\frac{1}{12(\sigma+it)}+O\left(\frac{1}{t^{2}}\right)\right),\,\,\,\,t\rightarrow\infty,\label{Stirling exact form on Introduction}
\end{equation}
valid whenever $-\infty<\sigma_{1}\leq\sigma\leq\sigma_{2}<\infty$.
A similar formula can be written for $t<0$ as $t$ tends to $-\infty$
by using the fact that $\Gamma(\overline{s})=\overline{\Gamma(s)}$.
Of course, a direct consequence of this exact version is
\begin{equation}
|\Gamma(\sigma+it)|=(2\pi)^{\frac{1}{2}}\,|t|^{\sigma-\frac{1}{2}}\,e^{-\frac{\pi}{2}|t|}\left(1+O\left(\frac{1}{|t|}\right)\right),\,\,\,\,\,|t|\rightarrow\infty.\label{preliminary stirling}
\end{equation}

Let us now recall some basic facts about the Dirichet series $\zeta_{\alpha}(s)$.
It is well-known that the theta function $\vartheta_{3}(\tau):=\sum_{n\in\mathbb{Z}}e^{\pi in^{2}\tau}$
is a modular form of weight $\frac{1}{2}$ with a multiplier system
with respect to the theta group $\Gamma_{\theta}$ (see \cite{lagarias_reins},
p. 15-16 for details). Therefore, for any $\alpha>0$, $\vartheta_{3}^{\alpha}(\tau)$
is a modular form of weight $\alpha/2$ with a multiplier system on
the same group. By definition, $r_{\alpha}(n)$ are the Fourier coefficients
of the expansion of $\vartheta_{3}(\tau)$ at the cusp $i\infty$,
this is (cf. \cite{RYCE} and \cite{lagarias_reins} for a clearer explanation
of this expansion)
\begin{equation}
\vartheta_{3}^{\alpha}(ix):=\theta^{\alpha}(x)=1+\sum_{n=1}^{\infty}r_{\alpha}(n)\,e^{-\pi n\,x},\,\,\,\,\,x>0.\label{second definition varthet coefficients}
\end{equation}

\bigskip{}

The order of growth of $r_{\alpha}(n)$ as $n\rightarrow\infty$ is
determined by classical estimates due to Petersson and Lehner \cite{lagarias_reins, lehner}. These estimates show that
\begin{equation}
r_{\alpha}(n)\ll_{\alpha}\begin{cases}
n^{\alpha/2-1} & \alpha>4\\
n^{\alpha/2-1}\log(n) & \alpha=4\\
n^{\alpha/4} & 0<\alpha<4.
\end{cases}\label{estimate r alpha (n) useful}
\end{equation}
Thus, we can rigorously define the Dirichlet series (\ref{zeta alpha definition intro})
in the form
\begin{equation}
\zeta_{\alpha}(s):=\sum_{n=1}^{\infty}\frac{r_{\alpha}(n)}{n^{s}},\,\,\,\,\,\,\text{Re}(s)>\sigma_{\alpha}:=\begin{cases}
\frac{\alpha}{2} & \alpha\geq4\\
1+\frac{\alpha}{4} & 0<\alpha<4
\end{cases}.\label{sigma alpha definition in the Series zeta alpha}
\end{equation}
Mimicking Riemann's paper, one can easily show that $\zeta_{\alpha}(s)$
can be analytically continued to the entire complex plane as a meromorphic
function with a simple pole located at $s=\frac{\alpha}{2}$, whose
residue is $\text{Res}_{s=\alpha/2}\zeta_{\alpha}(s)=\frac{\pi^{\alpha/2}}{\Gamma(\alpha/2)}$.
Moreover, it satisfies Hecke's functional equation 
\begin{equation}
\eta_{\alpha}(s):=\pi^{-s}\Gamma(s)\,\zeta_{\alpha}(s)=\pi^{-\left(\frac{\alpha}{2}-s\right)}\Gamma\left(\frac{\alpha}{2}-s\right)\,\zeta_{\alpha}\left(\frac{\alpha}{2}-s\right):=\eta_{\alpha}\left(\frac{\alpha}{2}-s\right).\label{completed Dirichlet series definition in lemmata section}
\end{equation}
It also follows from the Phragm\'en-Lindel\"of principle that $\zeta_{\alpha}(s)$
obeys to the convex estimate
\begin{equation}
\zeta_{\alpha}\left(\sigma+it\right)\ll_{\alpha}|t|^{\sigma_{\alpha}-\sigma+\delta},\,\,\,\,\frac{\alpha}{2}-\sigma_{\alpha}-\delta<\sigma<\sigma_{\alpha}+\delta,\label{phragmen lindelof zeta alpha}
\end{equation}
for any $\delta>0$ and $\sigma_{\alpha}$ defined by (\ref{sigma alpha definition in the Series zeta alpha}).

In order to estimate the series given in (\ref{symmetric function F}),
we need an asymptotic formula for the confluent hypergeometric function
valid when $|s|\rightarrow\infty$. Following the reasoning in {[}\cite{Dixit_theta},
p. 379{]}, recall that the Whittaker function $M_{\lambda,\mu}(z)$
has the asymptotic formula, {[}\cite{NIST}, p.
341, eq. (13.21.1){]}
\begin{equation}
M_{\lambda,\mu}(z)=\frac{z^{1/4}}{\sqrt{\pi}}\lambda^{-\mu-\frac{1}{4}}\Gamma\left(2\mu+1\right)\,\cos\left(2\sqrt{\lambda z}-\frac{\pi}{4}-\mu\pi\right)+O\left(|\lambda|^{-\mu-\frac{3}{4}}\right)\label{Whittaker expansion}
\end{equation}
as $|\lambda|\rightarrow\infty$ and $z$ such that $|\arg(\lambda z)|<2\pi$.
Furthermore, since
\begin{equation}
M_{\lambda,\mu}(z)=z^{\mu+\frac{1}{2}}e^{-z/2}\,_{1}F_{1}\left(\mu-\lambda+\frac{1}{2};\,2\mu+1;\,z\right),\label{definition whitttaker}
\end{equation}
we see that, as $|t|\rightarrow\infty$ and fixed $z\in\mathbb{R}$,
the substitutions in (\ref{Whittaker expansion}) and (\ref{definition whitttaker})
give the bound
\begin{equation}
\left|_{1}F_{1}\left(\frac{\alpha}{4}-it;\,\frac{\alpha}{2};\,\frac{z^{2}}{4}\right)\right|=\left(\frac{|z|}{2}\right)^{-\frac{\alpha}{2}}e^{\frac{z^{2}}{8}}\left\{ \Gamma\left(\frac{\alpha}{2}\right)\sqrt{\frac{|z|}{2\pi}}\,|t|^{\frac{1-\alpha}{4}}\,\exp\left(\sqrt{\frac{|t|}{2}}\,|z|\right)+O\left(|t|^{-\frac{\alpha}{4}-\frac{1}{4}}\right)\right\} ,\,\,\,\,|t|\rightarrow\infty.\label{bound for confluent hypergeometric useful Rcomb}
\end{equation}
This estimate is more than enough to justify most of the steps in
this paper. By Stirling's formula and 
(\ref{phragmen lindelof zeta alpha}), one can see that
\begin{equation}
\left|\eta_{\alpha}\left(\frac{\alpha}{4}+it\right)\right|\ll_{\alpha}\,|t|^{A(\alpha)}\,e^{-\frac{\pi}{2}|t|},\,\,\,\,\,|t|\rightarrow\infty,\label{eta alpha convex estimates}
\end{equation}
where $A(\alpha)=\sigma_{\alpha}-\frac{1}{2}$, $\sigma_{\alpha}$
being given by (\ref{sigma alpha definition in the Series zeta alpha}).
When combined with (\ref{bound for confluent hypergeometric useful Rcomb}), the convex estimate (\ref{eta alpha convex estimates}) yields the bound 
\begin{equation}
\left|\tilde{F}_{z,\alpha}\left(\frac{\alpha}{4}+it\right)\right|\leq\sum_{j\neq0}\left|c_{j}\,\eta_{\alpha}\left(\frac{\alpha}{4}+i\,(t+\lambda_{j})\right)\text{Re}\left(\,_{1}F_{1}\left(\frac{\alpha}{4}-i\,(t+\lambda_{j});\,\frac{\alpha}{2};\,\frac{z^{2}}{4}\right)\right)\right|\ll_{\alpha,z}C_{\lambda}\sum_{j=1}^{\infty}|c_{j}|\,|t|^{B(\alpha)}\,e^{-\frac{\pi}{2}|t|+|z|\sqrt{|t|}},\label{estimate simple-1}
\end{equation}
where we have used the fact that $\left(\lambda_{j}\right)_{j\in\mathbb{N}}$
is a bounded sequence and $\left(c_{j}\right)_{j\in\mathbb{N}}\in\ell^{1}$.
The term $C_{\lambda}$ only stands for a positive constant which
depends on the bounds of the sequence $\left(\lambda_{j}\right)_{j\in\mathbb{N}}$.

Next, let us note that an explicit way of writing the first equality
in (\ref{Dixit integral formula jacobi psi}) is
\begin{align}
1+2^{\frac{\alpha}{2}-1}\,\Gamma\left(\frac{\alpha}{2}\right)\,\left(\sqrt{\pi x}\,z\right)^{1-\frac{\alpha}{2}}\,\sum_{n=1}^{\infty}r_{\alpha}(n)\,n^{\frac{1}{2}-\frac{\alpha}{4}}\,e^{-\pi n\,x}\,J_{\frac{\alpha}{2}-1}\left(\sqrt{\pi n\,x}\,z\right)\nonumber \\
=\frac{e^{-\frac{z^{2}}{4}}}{x^{\alpha/2}}\left\{ 1+2^{\frac{\alpha}{2}-1}\Gamma\left(\frac{\alpha}{2}\right)\,\left(\sqrt{\frac{\pi}{x}}\,z\right)^{1-\frac{\alpha}{2}}\sum_{n=1}^{\infty}r_{\alpha}(n)\,n^{\frac{1}{2}-\frac{\alpha}{4}}\,e^{-\frac{\pi n}{x}}\,I_{\frac{\alpha}{2}-1}\left(\sqrt{\frac{\pi n}{x}}\,z\right)\right\} ,\label{making point summation formula}
\end{align}
whenever $\text{Re}(x)>0$ and $z\in\mathbb{C}$. Since the summation
formula (\ref{making point summation formula}) transforms a generalized
$\psi-$function involving the Bessel functions of the first kind,
it will be useful in our next argument to have a bound for the modified
Bessel function, $I_{\nu}(z)$, $\nu>-1$. In the previous paper {[}\cite{RYCE},
p. 42{]} we have used the famous Hankel expansion for $I_{\nu}(z)$,
$|z|\rightarrow\infty$. However, for our alternative argument we
need a bound for $|I_{\nu}(z)|$ that is somewhat uniform in $\nu$
and $z$.

The following simple bound will be useful: first, let us note that,
if $\nu\geq0$ and $k\in\mathbb{N}_{0}$,
\[
\Gamma(k+\nu+1)=\Gamma(\nu+1)(\nu+1)...(\nu+k)\geq k!\,\Gamma(\nu+1),
\]
which gives, after the use of the power series of $I_{\nu}(z)$,
\begin{equation}
|I_{\nu}(z)|\leq\left(\frac{|z|}{2}\right)^{\nu}\,\sum_{k=0}^{\infty}\frac{|z|^{2k}}{2^{2k}k!\,\Gamma(k+\nu+1)}\leq\frac{(|z|/2)^{\nu}}{\Gamma(\nu+1)}\,\sum_{k=0}^{\infty}\frac{|z|^{2k}}{2^{2k}(k!)^{2}}\leq\frac{(|z|/2)^{\nu}}{\Gamma(\nu+1)}\left\{ \sum_{k=0}^{\infty}\frac{|z|^{k}}{2^{k}k!}\right\} ^{2}=\left(\frac{|z|}{2}\right)^{\nu}\frac{e^{|z|}}{\Gamma(\nu+1)},\,\,\,\,\,\nu\geq0.\label{first inequaliuty nu >0}
\end{equation}
If, on the other hand, we want to extend (\ref{first inequaliuty nu >0})
to $-1<\nu<0$, we begin to note that, for $k\geq1$,
\[
\Gamma(k+\nu+1)=\Gamma(\nu+2)\,(\nu+2)...(\nu+k)\geq(k-1)!\,\Gamma(\nu+2).
\]
Hence, if $-1<\nu<0$,
\begin{equation}
|I_{\nu}(z)| \leq\left(\frac{|z|}{2}\right)^{\nu}\,\sum_{k=0}^{\infty}\frac{|z|^{2k}}{2^{2k}k!\,\Gamma(k+\nu+1)}\leq\frac{\left(|z|/2\right)^{\nu}}{\Gamma(\nu+1)}+\frac{\left(|z|/2\right)^{\nu+2}}{\Gamma(\nu+2)}\,\sum_{k=0}^{\infty}\frac{|z|^{2k}}{2^{2k}(k!)^{2}}<\left(\frac{|z|}{2}\right)^{\nu}\frac{e^{|z|}}{\Gamma(\nu+2)}\left(1+\frac{|z|^{2}}{4}\right).\label{second inequality -1 <nu <0}
\end{equation}

To proceed, let us recall some functions  and facts already mentioned at the introduction. From this point on, we will let $\alpha>0$ and $r_{\alpha}(n)$ be defined as the coefficients
of the $q-$expansion of $\theta^{\alpha}(x)$. For $\text{Re}(x)>0$
and $z\in\mathbb{C}$, we let $\psi_{\alpha}(x,z)$ denote the analogue
of Jacobi's $\psi-$function,
\begin{equation}
\psi_{\alpha}(x,z)=2^{\frac{\alpha}{2}-1}\,\Gamma\left(\frac{\alpha}{2}\right)\,\left(\sqrt{\pi x}\,z\right)^{1-\frac{\alpha}{2}}\,\sum_{n=1}^{\infty}r_{\alpha}(n)\,n^{\frac{1}{2}-\frac{\alpha}{4}}\,e^{-\pi n\,x}\,J_{\frac{\alpha}{2}-1}(\sqrt{\pi\,n\,x}\,z).\label{Definition Jacobi in section lemmas}
\end{equation}
As stated above, the transformation formula for $\psi_{\alpha}(x,z)$,
(\ref{making point summation formula}), will play a major role in
this paper. We shall also need some auxiliary summation formulas given
in {[}\cite{RYCE}, pp. 37-40{]}. Analogously to $r_{\alpha}(n)$, we
can consider positive powers of the theta function
\begin{equation}
\vartheta_{2}(\tau)=2\,\sum_{n=0}^{\infty}e^{\pi i\tau\left(n+\frac{1}{2}\right)^{2}},\,\,\,\,\text{Im}(\tau)>0,\label{second theta nulll werte}
\end{equation}
from the Fourier expansion at the cusp $\tau=i\infty$. We can create
a new arithmetical function $\tilde{r}_{\alpha}(m)$ as the coefficient
coming from the expansion (see {[}\cite{RYCE}, p.36, eq. (3.5){]} for
details) 
\begin{align}
\vartheta_{2}^{\alpha}(ix) & :=\theta_{2}^{\alpha}(x)=\left(2\,\sum_{n=0}^{\infty}e^{-\pi x\left(n+\frac{1}{2}\right)^{2}}\right)^{\alpha}=\left(2\,e^{-\frac{\pi x}{4}}+2\,\sum_{n=1}^{\infty}e^{-\pi x(n+\frac{1}{2})^{2}}\right)^{\alpha}\nonumber \\
 & =2^{\alpha}e^{-\frac{\pi\alpha x}{4}}\left(1+\,\sum_{n=1}^{\infty}e^{-\pi x\left(n^{2}+n\right)}\right)^{\alpha}=2^{\alpha}\,e^{-\frac{\pi\alpha x}{4}}\,\sum_{j=0}^{\infty}\left(\begin{array}{c}
\alpha\\
j
\end{array}\right)\,\left(\sum_{n=1}^{\infty}e^{-\pi x\left(n^{2}+n\right)}\right)^{j}\nonumber \\
 & :=\sum_{m=0}^{\infty}\tilde{r}_{\alpha}(m)\,e^{-\pi\left(m+\frac{\alpha}{4}\right)x},\,\,\,\,\,\,\,\,\,\,\,\text{Re}(x)>0.\label{definition ralpha tilde}
\end{align}
Analogously to $r_{\alpha}(n)$, the coefficients $\tilde{r}_{\alpha}(n)$
grow polynomially with $n$ (cf.{[}\cite{RYCE}, Lemma
3.1{]}). Note also that $\tilde{r}_{\alpha}(0):=2^{\alpha}$ by the
construction (\ref{definition ralpha tilde}). Using Jacobi's transformation
formula for $\vartheta_{2}(\tau)$, one can derive a summation formula
connecting $\tilde{r}_{\alpha}(n)$ with $r_{\alpha}(n)$. The next
lemma, given in {[}\cite{RYCE}, p. 40, Lemma 3.3{]}, establishes this
correspondence.

\begin{lemma}[\cite{RYCE}, Lemma 3.3] \label{lemma 2.1}
Let $r_{\alpha}(n)$ be the coefficients of the series expansion of
$\theta^{\alpha}(x)-1$, (\ref{r alpha definition iiiintro}), and $\tilde{r}_{\alpha}(n)$ be defined by
(\ref{definition ralpha tilde}). Then, for $\text{Re}(x)>0$ and
$y\in\mathbb{C}$, the following identity holds
\begin{align}
\sum_{n=1}^{\infty}(-1)^{n}\,r_{\alpha}(n)\,n^{\frac{1}{2}-\frac{\alpha}{4}}\,e^{-\pi nx}\,J_{\frac{\alpha}{2}-1}(y\,\sqrt{\pi\,n}) & =-\frac{y^{\frac{\alpha}{2}-1}\pi^{\frac{\alpha}{4}-\frac{1}{2}}}{2^{\frac{\alpha}{2}-1}\Gamma\left(\frac{\alpha}{2}\right)}\nonumber \\
+\,\frac{e^{-\frac{y^{2}}{4x}}}{x}\,\sum_{n=0}^{\infty}\tilde{r}_{\alpha}(n)\,\left(n+\frac{\alpha}{4}\right)^{\frac{1}{2}-\frac{\alpha}{4}}\,e^{-\frac{\pi}{x}\,\left(n+\frac{\alpha}{4}\right)} & I_{\frac{\alpha}{2}-1}\left(\frac{\sqrt{\pi(n+\frac{\alpha}{4})}\,y}{x}\right).\label{final formula for 1f1 theorem-1}
\end{align}
\end{lemma}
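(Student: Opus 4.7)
The identity is a Bessel-weighted generalization of the classical Jacobi transformation $\theta_4^\alpha(x) = x^{-\alpha/2}\theta_2^\alpha(1/x)$, which follows from $\vartheta_3(\tau+1) = \vartheta_4(\tau)$ combined with the standard modular transformation of $\vartheta_3$. The connection to the statement is that $(-1)^n r_\alpha(n)$ are precisely the non-trivial Fourier coefficients of $\theta_4^\alpha(x) = \theta^\alpha(x-i)$, that is,
\[
\theta_4^\alpha(x) = 1 + \sum_{n \geq 1}(-1)^n r_\alpha(n) e^{-\pi nx},
\]
while $\tilde{r}_\alpha(n)$ are by definition those of $\theta_2^\alpha$. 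As a sanity check, letting $y \to 0$ on both sides of the lemma and using $J_\nu(t), I_\nu(t) \sim (t/2)^\nu/\Gamma(\nu+1)$, the identity collapses --- after cancelling a common factor $(\sqrt{\pi}y/2)^{\alpha/2-1}/\Gamma(\alpha/2)$ --- to exactly this classical Jacobi transformation.

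I would prove the Bessel-weighted version by adapting the Mellin--Barnes strategy used in \cite{RYCE} to derive (\ref{making point summation formula}). Introduce the Dirichlet series
\[
\zeta_\alpha^{(4)}(s) := \sum_{n\geq 1}\frac{(-1)^n r_\alpha(n)}{n^s}, \qquad \zeta_\alpha^{(2)}(s) := \sum_{n\geq 0}\frac{\tilde{r}_\alpha(n)}{(n+\alpha/4)^s}.
\]
Splitting $\int_0^\infty x^{s-1}[\theta_4^\alpha(x) - 1]\,dx$ at $x = 1$ and applying Jacobi's transformation to the piece on $(0,1)$ yields both the analytic continuations and the functional equation
\[
\eta_\alpha^{(4)}(s) := \pi^{-s}\Gamma(s)\zeta_\alpha^{(4)}(s) = \pi^{-(\alpha/2-s)}\Gamma(\tfrac{\alpha}{2}-s)\zeta_\alpha^{(2)}(\tfrac{\alpha}{2}-s),
\]
with $\eta_\alpha^{(4)}$ possessing a single simple pole at $s = 0$ of residue $-1$ (and, crucially, no pole at $s = \alpha/2$, unlike $\eta_\alpha$). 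Using the power series of $J_{\alpha/2-1}$ together with the Mellin representation of $e^{-\pi nx}$, and interchanging sum and integral (justified by the growth (\ref{estimate r alpha (n) useful}), Stirling (\ref{preliminary stirling}), and the bounds (\ref{first inequaliuty nu >0})--(\ref{second inequality -1 <nu <0})), one writes the LHS of the lemma as
\[
L(x,y) = \frac{(\sqrt{\pi}y/2)^{\alpha/2-1}}{\Gamma(\alpha/2)}\cdot\frac{1}{2\pi i}\int_{(c)}\eta_\alpha^{(4)}(s)\,{}_1F_1\Bigl(s;\tfrac{\alpha}{2};-\tfrac{y^2}{4x}\Bigr)x^{-s}\,ds
\]
for $c$ in the region of absolute convergence.

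Shift the contour from $\text{Re}(s) = c$ leftwards to $\text{Re}(s) = c' < 0$. Since ${}_1F_1(s;\alpha/2;\cdot)$ is entire in $s$, the only pole crossed is the simple pole at $s = 0$; using ${}_1F_1(0;\alpha/2;\cdot) = 1$, the residue contribution evaluates to precisely $-y^{\alpha/2-1}\pi^{\alpha/4-1/2}/(2^{\alpha/2-1}\Gamma(\alpha/2))$, matching the constant term on the RHS of the lemma. On the shifted contour, apply the functional equation to replace $\eta_\alpha^{(4)}(s)$ by $\eta_\alpha^{(2)}(\alpha/2-s) := \pi^{-(\alpha/2-s)}\Gamma(\alpha/2-s)\zeta_\alpha^{(2)}(\alpha/2-s)$, change variables $s \mapsto \alpha/2-s$, and apply Kummer's transformation ${}_1F_1(a;c;w) = e^w\,{}_1F_1(c-a;c;-w)$ to produce the $e^{-y^2/(4x)}$ prefactor. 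Expanding $\zeta_\alpha^{(2)}$ termwise and identifying each resulting Mellin inversion as the standard integral representation of $I_{\alpha/2-1}$ then yields the $I$-Bessel sum on the RHS.

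The main obstacle lies in justifying the contour shift rigorously and pinning down exactly this single residue contribution. In the derivation of (\ref{making point summation formula}), $\zeta_\alpha$ has a pole at $s = \alpha/2$ which contributes the symmetric ``$1$'' terms on each side; by contrast, here $\zeta_\alpha^{(4)}$ is entire at $\alpha/2$, so only one asymmetric constant survives, and this is what explains the asymmetric form of the RHS of the lemma. The decay of the integrand on horizontal segments, needed to justify the contour shift, follows by combining Stirling's formula with a convexity-type bound for $\zeta_\alpha^{(4)}$ (analogous to (\ref{phragmen lindelof zeta alpha})) and the asymptotic (\ref{bound for confluent hypergeometric useful Rcomb}) for ${}_1F_1$.
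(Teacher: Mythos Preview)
The paper does not supply its own proof of this lemma; it is quoted from \cite{RYCE}, Lemma~3.3, so there is no in-paper argument to compare against directly. That said, your proposal is correct and is exactly the kind of Mellin--Barnes argument one expects \cite{RYCE} to use (and which you explicitly say you are adapting from the derivation of (\ref{making point summation formula})). Your identification of the pole structure of $\eta_\alpha^{(4)}$---a single simple pole at $s=0$ with residue $-1$, and no pole at $s=\alpha/2$ because $\theta_4^\alpha(x)=x^{-\alpha/2}\theta_2^\alpha(1/x)$ vanishes as $x\to 0^+$---is the decisive point, and it correctly accounts for the single asymmetric constant on the right-hand side. The remaining steps (Kummer's transformation to produce $e^{-y^2/(4x)}$, termwise expansion of $\zeta_\alpha^{(2)}$, and recognition of the resulting inverse Mellin transforms as $I_{\alpha/2-1}$) are routine once the contour has been shifted, and your justification for the decay on horizontal segments via Stirling, convexity, and (\ref{bound for confluent hypergeometric useful Rcomb}) is adequate.
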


\bigskip{}

The previous lemma was necessary in \cite{RYCE} to prove that, for
$z$ under the condition (\ref{condition rectangle z zeta alpha})
and any $m\in\mathbb{N}_{0}$,
\begin{equation}
\frac{d^{m}}{d\omega^{m}}\left(1+\psi_{\alpha}\left(e^{2i\omega},z\right)\right)\rightarrow0,\,\,\,\,\,\text{as \,\,\,\ensuremath{\omega\rightarrow\frac{\pi}{4}^{-}.}}\label{tend to zero as omega tends to pi/4}
\end{equation}
This curious transformation formula will now play an essential role
in giving a more precise version of (\ref{tend to zero as omega tends to pi/4}).
This is done in the next lemma, where we shall estimate uniformly
the derivatives of the function $\psi_{\alpha}\left(ie^{-2iu},z\right)$,
$0<u<\frac{\pi}{4}$.

\begin{lemma}\label{lemma 2.2}
Let $\psi_{\alpha}(x,z)$ be the generalized Jacobi's $\psi-$function
defined by (\ref{Definition Jacobi final version}) and assume that
$z\in\mathbb{R}$ satisfies the condition
\begin{equation}
-\frac{1}{6}\sqrt{\frac{\pi\alpha}{2}}\leq z\leq\frac{1}{6}\sqrt{\frac{\pi\alpha}{2}}.\label{condition 1/6}
\end{equation}
Then there are two positive constants $A$ and $C$ (depending only
on $\alpha$) such that, for any $0<u<\frac{\pi}{4}$,
\begin{equation}
\left|\frac{d^{k}}{du^{k}}\left(1+\psi_{\alpha}\left(ie^{-2iu},z\right)\right)\right|<C\,\frac{2^{7k}k!}{u^{\frac{\alpha}{2}+k}}e^{-\frac{A}{u}},\,\,\,\,\alpha>2\label{bound derivative !}
\end{equation}
and
\begin{equation}
\left|\frac{d^{k}}{du^{k}}\left(1+\psi_{\alpha}\left(ie^{-2iu},z\right)\right)\right|<C\,\frac{2^{7k}k!}{u^{\frac{\alpha}{2}+k+2}}e^{-\frac{A}{u}},\,\,\,\,0\leq\alpha\leq2.\label{second bound derivative 0 alpha 2}
\end{equation}
\end{lemma}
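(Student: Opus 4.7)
The plan is to first establish the bound for $k=0$ by a judicious application of Lemma \ref{lemma 2.1}, and then to deduce the $k\geq 1$ bounds via Cauchy's integral formula on a complex disk of radius $\asymp u$. Combining the two equalities in (\ref{equation in the setting of zeta alpha}) yields the reflection identity
\[
1+\psi_\alpha(x,z) = x^{-\alpha/2}e^{-z^2/4}\bigl(1+\psi_\alpha(1/x,iz)\bigr).
\]
Applied to $x=ie^{-2iu}$, so that $1/x=-ie^{2iu}$, this reduces the task to estimating $1+\psi_\alpha(-ie^{2iu},iz)$. I then invoke Lemma \ref{lemma 2.1} with the substitution $x_{\mathrm L}:=i(1-e^{2iu})=2\sin(u)\,e^{iu}$ and $y:=ze^{i(u+\pi/4)}$; the matching $(-1)^n e^{-\pi n x_{\mathrm L}}=e^{-\pi n(-ie^{2iu})}$ turns the left-hand side of Lemma \ref{lemma 2.1} into the Bessel series in the definition of $\psi_\alpha(-ie^{2iu},iz)$. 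A short computation shows that, after multiplication by the common prefactor, the first ``$-y^{\alpha/2-1}\pi^{\alpha/4-1/2}/(2^{\alpha/2-1}\Gamma(\alpha/2))$'' term on the right of Lemma \ref{lemma 2.1} simplifies to exactly $-1$, which cancels the $+1$ in $1+\psi_\alpha(-ie^{2iu},iz)$. One thus obtains the compact identity
\[
1+\psi_\alpha(ie^{-2iu},z) = \mathcal P(u,z)\sum_{n=0}^\infty \tilde r_\alpha(n)\bigl(n+\tfrac{\alpha}{4}\bigr)^{\!\tfrac{1}{2}-\tfrac{\alpha}{4}}e^{-\pi(n+\alpha/4)/x_{\mathrm L}}\,I_{\alpha/2-1}\!\Bigl(\tfrac{\sqrt{\pi(n+\alpha/4)}\,y}{x_{\mathrm L}}\Bigr),
\]
where $\mathcal P(u,z)$ is an explicit prefactor and, crucially, $\mathrm{Re}(1/x_{\mathrm L})=\cot(u)/2\sim 1/(2u)$ as $u\to 0^+$, so the factor $e^{-\pi(n+\alpha/4)/x_{\mathrm L}}$ supplies the desired $e^{-A/u}$ decay.

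To bound each term in the sum, use (\ref{first inequaliuty nu >0}) when $\alpha\geq 2$ and (\ref{second inequality -1 <nu <0}) when $0\leq \alpha<2$, the latter contributing an extra $(1+|w|^2/4)\sim u^{-2}$ factor. With $m:=n+\alpha/4$ and $|w|=|z|\sqrt{\pi m}/(2\sin u)$, combining the $|I_{\alpha/2-1}|$ estimates with $|\mathcal P(u,z)|\sim 1/\sin u$ produces a net $(\sin u)^{-\alpha/2}$ (respectively $(\sin u)^{-\alpha/2-2}$) polynomial dependence, times the exponential factor $e^{E(m,u)}$ with
\[
E(m,u):=-\tfrac{\pi m\cot(u)}{2}+\tfrac{|z|\sqrt{\pi m}}{2\sin(u)}.
\]
The function $m\mapsto E(m,u)$ is maximized at $m^\ast=z^2/(4\pi\cos^2(u))$, and the hypothesis $|z|\leq \tfrac{1}{6}\sqrt{\pi\alpha/2}$ forces $m^\ast<\alpha/4$, so over the effective summation range $m\geq\alpha/4$ the maximum occurs at $m=\alpha/4$ (i.e., $n=0$). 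A direct calculation then gives $E(\alpha/4,u)\leq -A/u$ for a constant $A>0$ depending only on $\alpha$; monotonicity of $E(\cdot,u)$ on $m>m^\ast$ together with the polynomial growth of $\tilde r_\alpha(n)$ controls the tail. This establishes the $k=0$ case of (\ref{bound derivative !}) and (\ref{second bound derivative 0 alpha 2}).

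For $k\geq 1$, the identity of the first paragraph extends holomorphically in $u$ to a complex neighborhood of the interval $(0,\pi/4)$, since $x_{\mathrm L}(\zeta)=2\sin(\zeta)\,e^{i\zeta}$ and the defining series retain their good analytic properties there. On a disk $|\zeta-u|\leq u/c$ with $c$ a fixed large constant (e.g., $c=2^7$), the quantities $\cot(\mathrm{Re}\,\zeta)$ and $1/\sin(\mathrm{Re}\,\zeta)$ remain comparable to $\cot(u)$ and $1/\sin(u)$, so the $k=0$ estimate persists on the disk with a slightly smaller constant $A'<A$ in the exponent. Cauchy's integral formula then gives
\[
\left|\tfrac{d^k}{du^k}\bigl(1+\psi_\alpha(ie^{-2iu},z)\bigr)\right|\leq \frac{k!}{(u/c)^k}\max_{|\zeta-u|=u/c}\bigl|1+\psi_\alpha(ie^{-2i\zeta},z)\bigr|,
\]
and the choice $c=2^7$ yields the precise $2^{7k}k!\,u^{-\alpha/2-k}e^{-A'/u}$ form of (\ref{bound derivative !}) (and the $u^{-\alpha/2-k-2}$ analogue for $0\leq\alpha\leq 2$). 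The main obstacle lies in this last paragraph, namely checking the uniformity of the $k=0$ estimate on the complex disk; this is a routine but somewhat intricate verification, as one must track how $\mathrm{Re}(1/x_{\mathrm L}(\zeta))$ and the Bessel argument $\sqrt{\pi(n+\alpha/4)}\,y(\zeta)/x_{\mathrm L}(\zeta)$ depend on the complex perturbation $\zeta$ of $u$.
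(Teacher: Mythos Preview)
Your plan is correct and matches the paper's strategy: transform $1+\psi_\alpha(ie^{-2iu},z)$ via Lemma~\ref{lemma 2.1} into an exponentially decaying series in $\tilde r_\alpha(n)$, bound it with the elementary $I_\nu$ inequalities (\ref{first inequaliuty nu >0})--(\ref{second inequality -1 <nu <0}), and then pass to derivatives by Cauchy's formula on a disk of radius $\lambda u$.

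Two organisational differences are worth noting. First, your initial reflection step via (\ref{equation in the setting of zeta alpha}) is an unnecessary detour: the paper applies Lemma~\ref{lemma 2.1} \emph{directly} to $\psi_\alpha(ie^{-2iw},z)$, using $ie^{-2iw}=i+2e^{-iw}\sin(w)$ so that $e^{-\pi n\,ie^{-2iw}}=(-1)^n e^{-2\pi n e^{-iw}\sin(w)}$; your route through $1/x=-ie^{2iu}$ produces the complex conjugate of the same identity and adds no leverage. Second, and more substantively, the paper does \emph{not} prove the $k=0$ real case first and then argue that it ``persists'' on the complex disk. Instead it fixes $\lambda=0.01$ from the outset, works entirely on the circle $C_{\lambda u_0}(u_0)$, and carries the explicit inequalities (Jordan's inequality, (\ref{inequality for fraction cosh})--(\ref{lower bound for the first term exponential}), the bound (\ref{inequality sinh (x) important!}) for $\sinh$) through to arrive at (\ref{the case k=00003D0 yey}); Cauchy then gives $k!\,\lambda^{-k}$ and $\lambda^{-1}<2^7$ yields the $2^{7k}$ factor. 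The ``routine but somewhat intricate verification'' you defer is precisely the content of the paper's proof, and the numerical margin is thin enough (the final exponent constant is $A\approx 0.04\alpha$) that one cannot simply assert persistence without redoing those estimates for complex $w$.
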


\begin{proof}

Let us fix $0<u_{0}<\frac{\pi}{4}$. We shall prove (\ref{bound derivative !})
only, as the bound (\ref{second bound derivative 0 alpha 2}) can
be similarly deduced. At the end of the proof we just outline the
difference in getting (\ref{second bound derivative 0 alpha 2}).
The derivative of the function $\psi_{\alpha}\left(ie^{-2iu},z\right)$
at the point $u_{0}$ is taken by integrating along a circle with
center $u_{0}$ and having radius $\lambda u_{0}$, $0<\lambda<1$.
Throughout this proof, the parameter $\lambda$ will be arbitrary
up to the point where the condition (\ref{condition 1/6}) enters
in the argument. For any $w\in D_{\lambda u_{0}}(u_{0}):=\text{int}(C_{\lambda u_{0}}(u_{0}))$,
it is simple to check that $\text{Re}\left(ie^{-2iw}\right)>0$, so
that $\psi_{\alpha}\left(ie^{-2iw},z\right)$ is analytic inside the
circle $C_{\lambda u_{0}}(u_{0})$ {[}\cite{RYCE}, Corollary 2.2{]}.
Hence, by Cauchy's formula,
\begin{equation}
\left[\frac{d^{k}}{du^{k}}\left(1+\psi_{\alpha}\left(ie^{-2iu},z\right)\right)\right]_{u=u_{0}}=\frac{k!}{2\pi i}\,\intop_{C_{\lambda u_{0}}(u_{0})}\,\frac{1+\psi_{\alpha}\left(ie^{-2iw},z\right)}{(w-u_{0})^{k+1}}\,dw.\label{Cauchy's formula at beginning!}
\end{equation}
Now, let us bound trivially the integral above: to do it, it will
be crucial to employ the transformation formula (\ref{final formula for 1f1 theorem-1}).
Indeed, since $ie^{-2iw}=i+2e^{-iw}\sin(w)$ and, by definition (\ref{Definition Jacobi final version}), 
\[
1+\psi_{\alpha}(ie^{-2iw},z):=1+2^{\frac{\alpha}{2}-1}\,\Gamma\left(\frac{\alpha}{2}\right)\,\left(\sqrt{\pi}\,e^{i\frac{\pi}{4}}e^{-iw}\,z\right)^{1-\frac{\alpha}{2}}\,\sum_{n=1}^{\infty}(-1)^{n}r_{\alpha}(n)\,n^{\frac{1}{2}-\frac{\alpha}{4}}\,e^{-2\pi ne^{-iw}\sin(w)}\,J_{\frac{\alpha}{2}-1}\left(\sqrt{\pi n}\,e^{i\left(\frac{\pi}{4}-w\right)}\,z\right),
\]
an application of (\ref{final formula for 1f1 theorem-1})
with $x=2e^{-iw}\sin(w)$ and $y=e^{i\left(\frac{\pi}{4}-w\right)}\,z$
gives 
\begin{align}
1+\psi_{\alpha}(ie^{-2iw},z) & =2^{\frac{\alpha}{2}-1}\Gamma\left(\frac{\alpha}{2}\right)\left(\sqrt{\pi}\,e^{i\left(\frac{\pi}{4}-w\right)}z\right)^{1-\frac{\alpha}{2}}\nonumber \\
\times\,\frac{e^{-\frac{ie^{-iw}z^{2}}{8\sin(w)}}}{2e^{-iw}\sin(w)} & \,\sum_{n=0}^{\infty}\,\tilde{r}_{\alpha}(n)e^{-\frac{\pi i}{2}\left(n+\frac{\alpha}{4}\right)}\,\left(n+\frac{\alpha}{4}\right)^{\frac{1}{2}-\frac{\alpha}{4}}\,e^{-\frac{\pi}{2\tan(w)}\,\left(n+\frac{\alpha}{4}\right)}I_{\frac{\alpha}{2}-1}\left(\frac{\sqrt{\pi(n+\frac{\alpha}{4})\,}e^{\frac{i\pi}{4}}z}{2\sin(w)}\right).\label{The transformation formula in accordance to RYCE}
\end{align}

We shall bound $|1+\psi_{\alpha}\left(ie^{-2iw},z\right)|$ by estimating
the second expression on the right-hand side of (\ref{The transformation formula in accordance to RYCE}).
We will track each factor independently. Clearly, for any $w\in C_{\lambda u_{0}}(u_{0})$,
\[
\left|2^{\frac{\alpha}{2}-1}\,\Gamma\left(\frac{\alpha}{2}\right)\,\left(\sqrt{\pi}\,e^{i\frac{\pi}{4}}e^{-iw}z\right)^{1-\frac{\alpha}{2}}\right|\leq2^{\frac{\alpha}{2}-1}\Gamma\left(\frac{\alpha}{2}\right)\pi^{\frac{1}{2}-\frac{\alpha}{4}}e^{\frac{\pi}{4}\left|1-\frac{\alpha}{2}\right|}|z|^{1-\frac{\alpha}{2}},
\]
where we have noted that $\lambda u_{0}\leq\frac{\pi}{4}$. Next,
\begin{align*}
\left|\frac{e^{-\frac{ie^{-iw}z^{2}}{8\sin(w)}}}{2e^{-iw}\sin(w)}\right| & =\frac{e^{-\frac{z^{2}}{8}-\text{Im}(w)}\exp\left(\frac{z^{2}}{8}\text{Im}\left(\frac{1}{\tan(w)}\right)\right)}{2\sqrt{\sin^{2}\left(\text{Re}(w)\right)+\sinh^{2}\left(\text{Im}(w)\right)}}\\
 & =\frac{e^{-\frac{z^{2}}{8}-\text{Im}(w)}\exp\left(-\frac{z^{2}}{8}\cdot\frac{\sinh(2\text{Im}(w))}{\cosh(2\text{Im}(w))-\cos(2\text{Re}(w))}\right)}{2\sqrt{\sin^{2}\left(\text{Re}(w)\right)+\sinh^{2}\left(\text{Im}(w)\right)}}\\
 & \leq\frac{e^{-\frac{z^{2}}{8}+\lambda u_{0}}}{2\sin\left(\text{Re}(w)\right)}\,\exp\left(-\frac{z^{2}}{8}\cdot\frac{\sinh(2\text{Im}(w))}{\cosh(2\text{Im}(w))-\cos(2\text{Re}(w))}\right),
\end{align*}
where in the last step we just have used the fact that, for any $w\in C_{\lambda u_{0}}(u_{0})$,
$|\text{Im}(w)|\leq\lambda u_{0}$. Moreover, we have used the explicit
expression
\begin{equation}
\frac{1}{\tan(w)}=\frac{\sin(2\text{Re}(w))-i\,\sinh(2\text{Im}(w))}{\cosh(2\text{Im}(w))-\cos(2\text{Re}(w))}.\label{cotangent real and imaginary parts}
\end{equation}
Using now the elementary Jordan inequality $\sin(x)>\frac{2x}{\pi}$,
$0<x<\frac{\pi}{2}$, and the fact that $(1-\lambda)u_{0}\leq\text{Re}(w)\leq(1+\lambda)u_{0}<\frac{\pi}{2}$,
we obtain
\begin{align}
\left|\frac{e^{-\frac{ie^{-iw}z^{2}}{8\sin(w)}}}{2e^{-iw}\sin(w)}\right| & <\frac{\pi e^{-\frac{z^{2}}{8}+\lambda u_{0}}}{4\text{Re}(w)}\,\exp\left(-\frac{z^{2}}{8}\cdot\frac{\sinh(2\text{Im}(w))}{\cosh(2\text{Im}(w))-\cos(2\text{Re}(w))}\right)\nonumber \\
 & \leq\frac{\pi e^{-\frac{z^{2}}{8}+\lambda u_{0}}}{4(1-\lambda)u_{0}}\,\exp\left(-\frac{z^{2}}{8}\cdot\frac{\sinh(2\text{Im}(w))}{\cosh(2\text{Im}(w))-\cos(2\text{Re}(w))}\right),\,\,\,\,w\in C_{\lambda u_{0}}(u_{0}).\label{The bound on the first term}
\end{align}

We now address each term of the series on the right-hand side of (\ref{The transformation formula in accordance to RYCE}).
Since we first aim at proving (\ref{bound derivative !}), we assume
here that $\alpha>2$, so that $\frac{\alpha}{2}-1>0$. Using the
first uniform inequality for $I_{\nu}(z)$, (\ref{first inequaliuty nu >0}), we have that the term involving the Bessel function admits the bound
\begin{align}
\left|I_{\frac{\alpha}{2}-1}\left(\frac{\sqrt{\pi(n+\frac{\alpha}{4})\,}e^{\frac{i\pi}{4}}z}{2\sin(w)}\right)\right| & \leq\frac{\pi^{\frac{\alpha}{4}-\frac{1}{2}}\left(n+\frac{\alpha}{4}\right)^{\frac{\alpha}{4}-\frac{1}{2}}|z|^{\frac{\alpha}{2}-1}}{2^{\alpha-2}\Gamma\left(\frac{\alpha}{2}\right)|\sin(w)|^{\frac{\alpha}{2}-1}}\,\exp\left(\frac{\sqrt{\pi\left(n+\frac{\alpha}{4}\right)}\,|z|}{2\,|\sin(w)|}\right)\nonumber \\
 & \leq\frac{\pi^{\frac{\alpha}{4}-\frac{1}{2}}\left(n+\frac{\alpha}{4}\right)^{\frac{\alpha}{4}-\frac{1}{2}}|z|^{\frac{\alpha}{2}-1}}{2^{\alpha-2}\Gamma\left(\frac{\alpha}{2}\right)\sin^{\frac{\alpha}{2}-1}\left(\text{Re}(w)\right)}\,\exp\left(\frac{\sqrt{\frac{\pi}{2}\left(n+\frac{\alpha}{4}\right)}\,|z|}{\sqrt{\cosh(2\text{Im}(w))-\cos(2\text{Re}(w))}}\right)\nonumber \\
 & \leq\frac{\pi^{\frac{3\alpha}{4}-\frac{3}{2}}\left(n+\frac{\alpha}{4}\right)^{\frac{\alpha}{4}-\frac{1}{2}}|z|^{\frac{\alpha}{2}-1}}{2^{\frac{3\alpha}{2}-3}\Gamma\left(\frac{\alpha}{2}\right)\left(1-\lambda\right)^{\frac{\alpha}{2}-1}u_{0}^{\frac{\alpha}{2}-1}}\,\exp\left(\frac{\sqrt{\frac{\pi}{2}\left(n+\frac{\alpha}{4}\right)}\,|z|}{\sqrt{\cosh(2\text{Im}(w))-\cos(2\text{Re}(w))}}\right).\label{Final bound Bessel term}
\end{align}
In the second inequality above we have used the fact that $|\sin(w)|\geq|\sin\left(\text{Re}(w)\right)|$
and in the third inequality we have once more invoked Jordan's inequality
$\sin(x)>\frac{2x}{\pi}$, $0<x<\frac{\pi}{2}$, together with the
fact that $(1-\lambda)u_{0}\leq\text{Re}(w)\leq(1+\lambda)u_{0}<\frac{\pi}{2}$.
Returning to (\ref{The transformation formula in accordance to RYCE}) and recalling once more (\ref{cotangent real and imaginary parts}), from (\ref{The bound on the first term}) and  (\ref{Final bound Bessel term}) we deduce
\begin{equation}
\left|1+\psi_{\alpha}(ie^{-2iw},z)\right|<d_{\alpha}\,\frac{\pi^{\frac{\alpha}{2}}e^{-\frac{z^{2}}{8}+\lambda u_{0}}}{2^{\alpha}\left(1-\lambda\right)^{\frac{\alpha}{2}}u_{0}^{\frac{\alpha}{2}}}\,\sum_{n=0}^{\infty}|\tilde{r}_{\alpha}(n)|\,\exp\left\{ -\frac{\pi}{2}\frac{\sin\left(2\text{Re}(w)\right)}{\cosh\left(2\text{Im}(w)\right)-\cos\left(2\text{Re}(w)\right)}\,P_{w}\left(\sqrt{n+\frac{\alpha}{4}}\right)\right\} ,\label{first inequality in the combination}
\end{equation}
where $d_{\alpha}$ is some constant depending on $\alpha$ and $P_{w}\left(X\right)$
is the real-valued polynomial
\begin{equation}
P_{w}(X):=X^{2}-\sqrt{\frac{2}{\pi}}\frac{|z|}{\sin(2\text{Re}(w))}\,\sqrt{\cosh\left(2\text{Im}(w)\right)-\cos\left(2\text{Re}(w)\right)}\,X+\frac{z^{2}}{4\pi}\,\frac{\sinh(2\text{Im}(w))}{\sin(2\text{Re}(w))}.\label{Polynomial at last}
\end{equation}
The inequality (\ref{first inequality in the combination}) is valid for any $w\in C_{\lambda u_{0}}(u_{0})$. Taking out the first term of the series on (\ref{first inequality in the combination}) and using the fact that $\tilde{r}_{\alpha}(0):=2^{\alpha}$, we get from (\ref{first inequality in the combination})
\begin{align}
\left|1+\psi_{\alpha}(ie^{-2iw},z)\right| & <\frac{d_{\alpha}\,\pi^{\frac{\alpha}{2}}e^{-\frac{z^{2}}{8}+\frac{u_{0}}{2}}}{\left(1-\lambda\right)^{\frac{\alpha}{2}}u_{0}^{\frac{\alpha}{2}}}\exp\left[-\frac{\pi}{2}\frac{\sin\left(2\text{Re}(w)\right)}{\cosh\left(2\text{Im}(w)\right)-\cos\left(2\text{Re}(w)\right)}\,P_{w}\left(\frac{\sqrt{\alpha}}{2}\right)\right]\nonumber \\
 & \times\sum_{n=0}^{\infty}\left|\frac{\tilde{r}_{\alpha}(n)}{\tilde{r}_{\alpha}(0)}\right|\exp\left[-Q_{w}(n)\right].\label{bound for the modulus in fact}
\end{align}
The exponent showing up in the infinite series is explicitly
\[
Q_{w}(n):=\frac{\pi}{2}\frac{\sin\left(2\text{Re}(w)\right)}{\cosh\left(2\text{Im}(w)\right)-\cos\left(2\text{Re}(w)\right)}\left[P_{w}\left(\sqrt{n+\frac{\alpha}{4}}\right)-P_{w}\left(\frac{\sqrt{\alpha}}{2}\right)\right],
\]
where $P_{w}(X)$ is the polynomial given by (\ref{Polynomial at last}). We will show below that there is a choice of $\lambda$ such that, for any $z$ satisfying (\ref{condition 1/6}),  $Q_{w}(n)>0,\,\,\forall n\in\mathbb{N}$.  Since $|\tilde{r}_{\alpha}(n)|<A_{\alpha}\,n^{\frac{\alpha}{2}}$ (see [\cite{RYCE},
p. 38]), the infinite series that appears on the right-hand side of
(\ref{bound for the modulus in fact}) is convergent. The remaining part of our proof will be to find a uniform bound for it, with a constant depending on  $\lambda$ and $\alpha$.
We want to find an upper bound for
\[
\sum_{n=0}^{\infty}\left|\frac{\tilde{r}_{\alpha}(n)}{\tilde{r}_{\alpha}(0)}\right|\,\exp\left[-\frac{\pi}{2}\frac{\sin\left(2\text{Re}(w)\right)}{\cosh\left(2\text{Im}(w)\right)-\cos\left(2\text{Re}(w)\right)}\left\{ n-\sqrt{\frac{2}{\pi}}\,|z|\frac{\sqrt{\cosh\left(2\text{Im}(w)\right)-\cos\left(2\text{Re}(w)\right)}}{\sin(2\text{Re}(w))}\left(\sqrt{n+\frac{\alpha}{4}}-\frac{\sqrt{\alpha}}{2}\right)\right\} \right].
\]
Since $(1-\lambda)u_{0}\leq\text{Re}(w)\leq(1+\lambda)u_{0}$, we
know from Jordan's inequality 
\begin{equation*}
\cosh(2\text{Im}(w))-\cos(2\text{Re}(w)) \geq1-\cos\left(2(1-\lambda)u_{0}\right)=\intop_{0}^{2(1-\lambda)u_{0}}\sin(t)\,dt>\frac{2}{\pi}\,\intop_{0}^{2(1-\lambda)u_{0}}t\,dt=\frac{4(1-\lambda)^{2}}{\pi}u_{0}^{2},
\end{equation*}
which implies
\begin{equation}
\frac{1}{\sqrt{\cosh\left(2\text{Im}(w)\right)-\cos\left(2\text{Re}(w)\right)}}<\frac{\sqrt{\pi}}{2(1-\lambda)u_{0}}.\label{inequality for fraction cosh}
\end{equation}
On the other hand, we can find the upper bound 
\begin{align}
\cosh(2\text{Im}(w))-\cos(2\text{Re}(w)) & =\intop_{0}^{2\text{Im}(w)}\sinh(t)\,dt+\intop_{0}^{2\text{Re}(w)}\sin(t)\,dt\leq\intop_{0}^{2\text{Im}(w)}t\,e^{\frac{t^{2}}{6}}dt+\intop_{0}^{2\text{Re}(w)}t\,dt\nonumber \\
 & \leq2\text{Im}(w)^{2}\,e^{\frac{2}{3}\text{Im}(w)^{2}}+2\text{Re}(w)^{2}<4e^{\frac{2\lambda^{2}}{3}u_{0}^{2}}\left(1+\lambda\right)^{2}u_{0}^{2},\label{inequality first for cosh}
\end{align}
where we have used the fact that $(1-\lambda)u_{0}\leq\text{Re}(w)\leq(1+\lambda)u_{0}$,
$-\lambda u_{0}\leq\text{Im}(w)\leq\lambda u_{0}$. We have also used
the known inequality
\begin{equation}
\sinh(x)\leq x\,e^{\frac{x^{2}}{6}},\,\,\,\,x>0.\label{inequality sinh (x) important!}
\end{equation}
Since $(1-\lambda)u_{0}\leq\text{Re}(w)\leq(1+\lambda)u_{0}<\frac{\pi}{2}$,
another application of Jordan's inequality gives 
\begin{equation}
\sin\left(2\text{Re}(w)\right)>\begin{cases}
\frac{4}{\pi}\text{Re}(w), & 0<\text{Re}(w)<\frac{\pi}{4}\\
2-\frac{4}{\pi}\text{Re}(w), & \frac{\pi}{4}\leq\text{Re}(w)<\frac{\pi}{2}
\end{cases}\,\,\geq\frac{4}{\pi}\left(1-\lambda\right)u_{0}.\label{final inequality sin(2Re)}
\end{equation}

Thus, combining (\ref{inequality first for cosh}) with (\ref{final inequality sin(2Re)}) and recalling that $0<u_{0}<\frac{\pi}{4}$, we get the inequality
\begin{equation}
\frac{\sin(2\text{Re}(w))}{\cosh(2\text{Im}(w))-\cos(2\text{Re}(w))}>\frac{\left(1-\lambda\right)}{\pi e^{\frac{2\lambda^{2}}{3}u_{0}^{2}}\left(1+\lambda\right)^{2}u_{0}}>\frac{\left(1-\lambda\right)}{\pi e^{\frac{\pi^{2}\lambda^{2}}{24}}\left(1+\lambda\right)^{2}u_{0}}.\label{lower bound for the first term exponential}
\end{equation}
Thus, the infinite series on the right-hand side of (\ref{bound for the modulus in fact}) admits the bound
\begin{align*}
\sum_{n=0}^{\infty}\left|\frac{\tilde{r}_{\alpha}(n)}{\tilde{r}_{\alpha}(0)}\right|\,&\exp\left[-\frac{\pi}{2}\frac{\sin\left(2\text{Re}(w)\right)}{\cosh\left(2\text{Im}(w)\right)-\cos\left(2\text{Re}(w)\right)}\left\{ n-\frac{\sqrt{\frac{2}{\pi}}\,|z|\left(\sqrt{n+\frac{\alpha}{4}}-\frac{\sqrt{\alpha}}{2}\right)}{\sin(2\text{Re}(w))}\sqrt{\cosh\left(2\text{Im}(w)\right)-\cos\left(2\text{Re}(w)\right)}\right\} \right]\\
&<\sum_{n=0}^{\infty}\left|\frac{\tilde{r}_{\alpha}(n)}{\tilde{r}_{\alpha}(0)}\right|\,\exp\left[-\frac{1}{u_{0}}\left\{\frac{n(1-\lambda)}{2e^{\frac{\pi^{2}\lambda^{2}}{24}}\left(1+\lambda\right)^{2}}-\frac{\pi\,|z|}{2\sqrt{2}(1-\lambda)}\left(\sqrt{n+\frac{\alpha}{4}}-\frac{\sqrt{\alpha}}{2}\right)\right\} \right].
\end{align*}
Using the condition (\ref{condition 1/6}), one sees that the exponent in the previous expression is bounded by
\begin{align*}
\exp\left[-\frac{1}{u_{0}}\left\{ \frac{n(1-\lambda)}{2e^{\frac{\pi^{2}\lambda^{2}}{24}}\left(1+\lambda\right)^{2}}-\frac{\pi\,|z|}{2\sqrt{2}(1-\lambda)}\left(\sqrt{n+\frac{\alpha}{4}}-\frac{\sqrt{\alpha}}{2}\right)\right\} \right]\\
\leq\exp\left[-\frac{1}{u_{0}}\left\{ \frac{n(1-\lambda)}{2e^{\frac{\pi^{2}\lambda^{2}}{24}}\left(1+\lambda\right)^{2}}-\frac{\pi^{\frac{3}{2}}\sqrt{\alpha}}{24(1-\lambda)}\left(\sqrt{n+\frac{\alpha}{4}}-\frac{\sqrt{\alpha}}{2}\right)\right\} \right]\\
\leq\exp\left[-\frac{n}{u_{0}}\left\{ \frac{1-\lambda}{2e^{\frac{\pi^{2}\lambda^{2}}{24}}\left(1+\lambda\right)^{2}}-\frac{\pi^{\frac{3}{2}}}{24(1-\lambda)}\right\} \right],
\end{align*}
where in the last inequality we have used the mean value theorem for
the function $f(x)=\sqrt{x+\frac{\alpha}{4}}$. If we select the value $\lambda=0.01$, we have that  
\[
\frac{1-\lambda}{2e^{\frac{\pi^{2}\lambda^{2}}{24}}\left(1+\lambda\right)^{2}}-\frac{\pi^{\frac{3}{2}}}{24(1-\lambda)}>\frac{1}{4},
\]
and so, since $0<u_{0}<\frac{\pi}{4}$,
\begin{align}
\sum_{n=0}^{\infty}\left|\frac{\tilde{r}_{\alpha}(n)}{\tilde{r}_{\alpha}(0)}\right|\,\exp\left[-\frac{1}{u_{0}}\left\{ \frac{n(1-\lambda)}{2e^{\frac{\pi^{2}\lambda^{2}}{24}}\left(1+\lambda\right)^{2}}-\frac{\pi\,|z|}{2\sqrt{2}(1-\lambda)}\left(\sqrt{n+\frac{\alpha}{4}}-\frac{\sqrt{\alpha}}{2}\right)\right\} \right]\nonumber \\
\leq\sum_{n=0}^{\infty}\left|\frac{\tilde{r}_{\alpha}(n)}{\tilde{r}_{\alpha}(0)}\right|\,\exp\left[-\frac{n}{4u_{0}}\right]<\frac{A_{\alpha}}{2^{\alpha}}\sum_{n=0}^{\infty}n^{\frac{\alpha}{2}}\,\exp\left[-\frac{n}{\pi}\right]\leq M_{\alpha}.\label{bound for the series!}
\end{align}
Combining (\ref{bound for the series!}) with (\ref{bound for the modulus in fact}) and using the fact that $0<u_{0}<\frac{\pi}{4}$, the following bound holds (note that we are now selecting $\lambda$ as equal to $0.01$)
\begin{equation}
\left|1+\psi_{\alpha}(ie^{-2iw},z)\right|<\frac{d_{\alpha}M_{\alpha}e^{\frac{\pi}{8}}\,\pi^{\frac{\alpha}{2}}\,e^{-\frac{z^{2}}{8}}}{(2u_{0})^{\frac{\alpha}{2}}}\,\exp\left[-\frac{\pi}{2}\frac{\sin\left(2\text{Re}(w)\right)}{\cosh\left(2\text{Im}(w)\right)-\cos\left(2\text{Re}(w)\right)}\,P_{w}\left(\frac{\sqrt{\alpha}}{2}\right)\right],\label{intermediate bound}
\end{equation}
which is almost (\ref{bound derivative !}) with $k=0$. To conclude
the proof in this direction, we just need to bound uniformly the term
\[
\exp\left[-\frac{\pi}{2}\frac{\sin\left(2\text{Re}(w)\right)}{\cosh\left(2\text{Im}(w)\right)-\cos\left(2\text{Re}(w)\right)}\,P_{w}\left(\frac{\sqrt{\alpha}}{2}\right)\right],\,\,\,\,\,\,w\in C_{\lambda u_{0}}(u_{0})
\]
under the hypothesis on $z$ (\ref{condition 1/6}) and the
choice $\lambda=0.01$. This can be done by appealing to (\ref{inequality sinh (x) important!}),
(\ref{inequality for fraction cosh}) and (\ref{lower bound for the first term exponential}),
which give
\begin{align}
\exp&\left[-\frac{\pi}{2}\frac{\sin\left(2\text{Re}(w)\right)}{\cosh\left(2\text{Im}(w)\right)-\cos\left(2\text{Re}(w)\right)}\,P_{w}\left(\frac{\sqrt{\alpha}}{2}\right)\right]
<\exp\left[-\frac{\alpha}{8}\,\frac{\left(1-\lambda\right)}{e^{\frac{\pi^{2}\lambda^{2}}{24}}\left(1+\lambda\right)^{2}u_{0}}+\sqrt{\frac{\alpha}{2}}\,\frac{\pi\,|z|}{4(1-\lambda)u_{0}}+\frac{\pi z^{2}\lambda e^{\frac{\pi^{2}\lambda^{2}}{24}}}{16(1-\lambda)^{2}u_{0}}\right]\nonumber \\
&\leq\exp\left[-\frac{\alpha}{u_{0}}\left\{ \frac{1-\lambda}{8e^{\frac{\pi^{2}\lambda^{2}}{24}}\left(1+\lambda\right)^{2}}-\frac{\pi^{\frac{3}{2}}}{48(1-\lambda)}-\frac{\pi^{2}\lambda e^{\frac{\pi^{2}\lambda^{2}}{24}}}{1152(1-\lambda)^{2}}\right\} \right].\label{bound isolated exponential term!}
\end{align}
However, for $\lambda=0.01$, a numerical confirmation gives
\[
\frac{1-\lambda}{8e^{\frac{\pi^{2}\lambda^{2}}{24}}\left(1+\lambda\right)^{2}}-\frac{\pi^{\frac{3}{2}}}{48(1-\lambda)}-\frac{\pi^{2}\lambda e^{\frac{\pi^{2}\lambda^{2}}{24}}}{1152(1-\lambda)^{2}}>0.004,
\]
and so, returning to (\ref{intermediate bound}), we get from the previous inequality that
\begin{equation}
\left|1+\psi_{\alpha}(ie^{-2iw},z)\right|<\frac{d_{\alpha}M_{\alpha}e^{\frac{\pi}{8}}\,\pi^{\frac{\alpha}{2}}e^{-\frac{z^{2}}{8}}}{(2u_{0})^{\frac{\alpha}{2}}}\exp\left[-\frac{0.04\,\alpha}{u_{0}}\right]=\frac{C}{u_{0}^{\alpha/2}}\,e^{-\frac{A}{u_{0}}}\label{the case k=00003D0 yey}
\end{equation}
where $C$ and $A:=0.04\,\alpha$ only depend on $\alpha$. This proves
(\ref{bound derivative !}) for $k=0$. For the remaining cases, let
us invoke Cauchy's integral formula (\ref{Cauchy's formula at beginning!})
and use (\ref{the case k=00003D0 yey}) to get
\begin{equation}
\left|\left[\frac{d^{k}}{du^{k}}\psi_{\alpha}\left(ie^{-2iu},z\right)\right]_{u=u_{0}}\right| \leq\frac{k!}{2\pi\left(\lambda u_{0}\right)^{k+1}}\,\intop_{C_{\lambda u_{0}}(u_{0})}\,|1+\psi_{\alpha}\left(ie^{-2iw},z\right)|\,|dw|<C\,\frac{k!}{u_{0}^{\frac{\alpha}{2}+k}\lambda^{k}}e^{-\frac{A}{u_{0}}}.\label{steps to get bound derivativeees}
\end{equation}
Finally, under the choice $\lambda=0.01$, we know $\frac{1}{\lambda}<2^{7}$, 
and so we derive our estimate (\ref{bound derivative !}) in its final
form.

\bigskip{}

To get the second formula (\ref{second bound derivative 0 alpha 2}),
we use the bound (\ref{second inequality -1 <nu <0}) for $I_{\nu}(z)$, $-1<\nu<0$, and we invoke the same kind of inequalities as in (\ref{Final bound Bessel term}) to obtain
\begin{align}
\left|I_{\frac{\alpha}{2}-1}\left(\frac{\sqrt{\pi(n+\frac{\alpha}{4})\,}e^{\frac{i\pi}{4}}z}{2\sin(w)}\right)\right| & <\frac{\pi^{\frac{\alpha}{4}-\frac{1}{2}}\left(n+\frac{\alpha}{4}\right)^{\frac{\alpha}{4}-\frac{1}{2}}|z|^{\frac{\alpha}{2}-1}}{2^{\alpha-2}\Gamma\left(\frac{\alpha}{2}+1\right)|\sin(w)|^{\frac{\alpha}{2}-1}}\exp\left(\frac{\sqrt{\frac{\pi}{2}\left(n+\frac{\alpha}{4}\right)}\,|z|}{\sqrt{\cosh(2\text{Im}(w))-\cos(2\text{Re}(w))}}\right)\left(1+\frac{\pi^{3}(n+\frac{\alpha}{4})|z|^{2}}{64(1-\lambda)^{2}u_{0}^{2}}\right)\nonumber \\
 & \leq\mathcal{A}\,\frac{\pi^{\frac{\alpha}{4}-\frac{1}{2}}\left(n+\frac{\alpha}{4}\right)^{\frac{\alpha}{4}+\frac{1}{2}}|z|^{\frac{\alpha}{2}+1}}{2^{\frac{3\alpha}{2}-3}\Gamma\left(\frac{\alpha}{2}+1\right)\left(1-\lambda\right)^{\frac{\alpha}{2}+1}u_{0}^{\frac{\alpha}{2}+1}}\exp\left(\frac{\sqrt{\frac{\pi}{2}\left(n+\frac{\alpha}{4}\right)}\,|z|}{\sqrt{\cosh(2\text{Im}(w))-\cos(2\text{Re}(w))}}\right),\label{bound for other range of alpha}
\end{align}
for some absolutely large constant $\mathcal{A}$. There is no formal
difference between (\ref{bound for other range of alpha}) and (\ref{Final bound Bessel term}),
the only exception being in the power of $1/u_{0}$. Thus, the same
argument carries through with an extra factor of $1/u_{0}^{2}$. This
gives (\ref{second bound derivative 0 alpha 2}) and finishes the
proof. 
\end{proof}

\begin{remark}\label{remark 2.1}
Of course, the condition (\ref{condition first rectangle}) may be
improved if we take a more careful choice of the parameter $\lambda$
in the previous proof. Moreover, when $\nu>-\frac{1}{2}$, the uniform
bounds (\ref{first inequaliuty nu >0}) and (\ref{second inequality -1 <nu <0})
can be replaced by the better inequality,
\begin{equation}
\left|I_{\nu}(z)\right|\leq\left(\frac{|z|}{2}\right)^{\nu}\frac{e^{|\text{Re}(z)|}}{\Gamma(\nu+1)},\,\,\,\,\,\nu>-\frac{1}{2},\label{Bound better for Bessel with reeeeal parrrt}
\end{equation}
which can be obtained via the integral representation {[}\cite{NIST},
p. 252, eq. (10.32.2){]}
\[
I_{\nu}\left(z\right)=\frac{\left(z/2\right)^{\nu}}{\sqrt{\pi}\Gamma\left(\nu+\frac{1}{2}\right)}\,\intop_{-1}^{1}\left(1-t^{2}\right)^{\nu-\frac{1}{2}}e^{zt}\,dt,\,\,\,\,\,\nu>-\frac{1}{2}.
\]
Using (\ref{Bound better for Bessel with reeeeal parrrt}) instead
of (\ref{first inequaliuty nu >0}) in the proof above, one may enlarge
the interval $\left[-\frac{1}{6}\sqrt{\frac{\pi\alpha}{2}},\frac{1}{6}\sqrt{\frac{\pi\alpha}{2}}\right]$
in the condition (\ref{condition 1/6}) to $\left[-\frac{1}{4}\sqrt{\frac{\pi\alpha}{2}},\frac{1}{4}\sqrt{\frac{\pi\alpha}{2}}\right]$
when $\alpha\geq1$. Therefore, there are some cases where technical
improvements lead to stronger results than those stated in our Theorems \ref{theorem 1.1} and \ref{theorem 1.2}.
\end{remark}

We now give another lemma that will be crucial in the proof of our
result. 

\begin{lemma} \label{lemma 2.3}
Let $h:\,\mathbb{C}\longmapsto\mathbb{C}$ be an analytic function.
If $z\in\left[-\frac{1}{6}\sqrt{\frac{\pi\alpha}{2}},\frac{1}{6}\sqrt{\frac{\pi\alpha}{2}}\right]$
then, for every $p\in\mathbb{N}_{0}$, the following relation holds
\begin{equation}
\frac{d^{2p}}{du^{2p}}\left\{ h(u)\,\left(e^{-z^{2}/8}+e^{z^{2}/8}\,\psi_{\alpha}\left(ie^{-2iu},z\right)\right)\right\} =-2\sinh\left(\frac{z^{2}}{8}\right)h^{(2p)}(u)+\mathcal{A}_{\alpha,p}(u,z),\label{bound composite derrrivative}
\end{equation}
where, for any $0<u<\frac{\pi}{4}$, $\mathcal{A}_{\alpha,p}(u,z)$
satisfies the inequalities
\begin{equation}
|\mathcal{A}_{\alpha,p}(u,z)|<D\frac{2^{14p}(2p)!e^{-\frac{A}{u}}}{u^{\frac{\alpha}{2}+2p}}\left\Vert h\right\Vert _{L^{\infty}(C_{1}(0))},\,\,\,\,\alpha>2,\label{bound for Aup}
\end{equation}
\begin{equation}
|\mathcal{A}_{\alpha,p}(u,z)|<D\frac{2^{14p}(2p)!e^{-\frac{A}{u}}}{u^{\frac{\alpha}{2}+2+2p}}\left\Vert h\right\Vert _{L^{\infty}(C_{1}(0))},\,\,\,\,0\leq\alpha\leq2,\label{bound for Aup for other range}
\end{equation}
with $A$ and $D$ depending only on $\alpha$.
\end{lemma}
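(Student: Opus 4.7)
The plan is to perform an algebraic rewrite first, then apply Leibniz's rule combined with the already-proved Lemma \ref{lemma 2.2} and a standard Cauchy-type estimate for $h$. I would start by using the identity $e^{-z^{2}/8} = e^{z^{2}/8} - 2\sinh(z^{2}/8)$ to rewrite
\[
e^{-z^{2}/8} + e^{z^{2}/8}\psi_{\alpha}(ie^{-2iu},z) = -2\sinh(z^{2}/8) + e^{z^{2}/8}\bigl(1 + \psi_{\alpha}(ie^{-2iu},z)\bigr).
\]
The $(2p)$-th derivative of $h(u)$ times the constant $-2\sinh(z^{2}/8)$ reproduces precisely the explicit term on the right-hand side of (\ref{bound composite derrrivative}). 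The identity in (\ref{bound composite derrrivative}) therefore reduces to defining
\[
\mathcal{A}_{\alpha,p}(u,z) := e^{z^{2}/8}\,\frac{d^{2p}}{du^{2p}}\bigl[h(u)\bigl(1 + \psi_{\alpha}(ie^{-2iu},z)\bigr)\bigr]
\]
and establishing the stated upper bounds for this quantity.

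Next, I would expand $\mathcal{A}_{\alpha,p}(u,z)$ by Leibniz's rule,
\[
\mathcal{A}_{\alpha,p}(u,z) = e^{z^{2}/8}\sum_{k=0}^{2p}\binom{2p}{k}h^{(k)}(u)\,\frac{d^{2p-k}}{du^{2p-k}}\bigl(1+\psi_{\alpha}(ie^{-2iu},z)\bigr),
\]
and attack the two types of factors separately. The derivatives of $1+\psi_{\alpha}(ie^{-2iu},z)$ are handled directly by Lemma \ref{lemma 2.2}, which supplies the factor $C\,2^{7(2p-k)}(2p-k)!\,u^{-\alpha/2-2p+k}\,e^{-A/u}$ when $\alpha>2$, and the analogous factor with an extra $u^{-2}$ when $0\le\alpha\le 2$. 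The derivatives of $h$ are controlled by Cauchy's integral formula: since $h$ is entire and $0<u<\pi/4$, choosing the fixed radius $\rho := 1-\pi/4$ keeps the closed disc $\overline{D_{\rho}(u)}$ inside $\overline{D_{1}(0)}$, so the maximum modulus principle gives $|h^{(k)}(u)| \le k!\,\rho^{-k}\,\|h\|_{L^{\infty}(C_{1}(0))}$.

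Multiplying these two bounds and invoking the elementary identity $\binom{2p}{k}k!(2p-k)! = (2p)!$ collapses the binomial coefficients into a single $(2p)!$ prefactor. After pulling out $2^{14p}$ together with $u^{-\alpha/2-2p}$ (resp.\ $u^{-\alpha/2-2-2p}$), what remains is the geometric sum $\sum_{k=0}^{2p}\bigl(u/(2^{7}\rho)\bigr)^{k}$, which is uniformly bounded on $(0,\pi/4)$ because $u/(2^{7}\rho) \ll 1$ for this choice of $\rho$. Absorbing the remaining $\alpha$-dependent constants, including $e^{z^{2}/8}$ (bounded since $z$ lies in a compact interval), into a single constant $D$ delivers (\ref{bound for Aup}) and (\ref{bound for Aup for other range}). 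The substantive work has already been done in Lemma \ref{lemma 2.2}; the only care needed here is bookkeeping — matching the $2^{7m}$ factor from Lemma \ref{lemma 2.2} with the $1/\rho^{k}$ factor from the Cauchy estimate so that all exponentials merge cleanly into the final $2^{14p}$, which is the one spot where a miscount would propagate.
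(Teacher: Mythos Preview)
Your proposal is correct and follows essentially the same route as the paper's proof: the same algebraic rewrite to isolate $e^{z^{2}/8}(1+\psi_{\alpha})$, Leibniz's rule, Lemma~\ref{lemma 2.2} for the $\psi_{\alpha}$ derivatives, and a Cauchy estimate for the $h$ derivatives, with the binomial factors collapsing to $(2p)!$. Your choice of the explicit radius $\rho=1-\pi/4$ is in fact a bit more careful than the paper, which applies the Cauchy estimate with an implicit radius~$1$; otherwise the two arguments are the same up to the order of the indices in the Leibniz sum and the way the final geometric sum is bounded.
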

\begin{proof}
First observe that
\begin{align}
\frac{d^{2p}}{du^{2p}}\left\{ h(u)\,\left(e^{-z^{2}/8}+e^{z^{2}/8}\,\psi_{\alpha}\left(ie^{-2iu},z\right)\right)\right\}  & =-2\sinh\left(\frac{z^{2}}{8}\right)h^{(2p)}(u)+e^{\frac{z^{2}}{8}}\,\frac{d^{2p}}{du^{2p}}\,\left\{ h(u)\,\left(1+\psi_{\alpha}\left(ie^{-2iu},z\right)\right)\right\} \nonumber \\
 & :=-2\sinh\left(\frac{z^{2}}{8}\right)h^{(2p)}(u)+\mathcal{A}_{\alpha,p}(u,z).\label{formula that defines A alpha, p}
\end{align}
By the well-known Cauchy's estimates for the coefficients
of analytic functions and (\ref{bound derivative !}), we obtain 
\begin{align*}
\left|e^{\frac{z^{2}}{8}}\,\frac{d^{2p}}{du^{2p}}\,\left\{ h(u)\,\left(1+\psi_{\alpha}\left(ie^{-2iu},z\right)\right)\right\} \right| & \leq e^{\frac{\pi\alpha}{576}}\,\sum_{k=0}^{2p}\left(\begin{array}{c}
2p\\
k
\end{array}\right)\,\left|h^{(2p-k)}(u)\right|\,\left|\frac{d^{k}}{du^{k}}\left(1+\psi_{\alpha}\left(ie^{-2iu},z\right)\right)\right|\\
 & \leq e^{\frac{\pi\alpha}{576}}\left\Vert h\right\Vert _{L^{\infty}(D(0,1))}\,\sum_{k=0}^{2p}\left(\begin{array}{c}
2p\\
k
\end{array}\right)(2p-k)!\,\left|\frac{d^{k}}{du^{k}}\left(1+\psi_{\alpha}\left(ie^{-2iu},z\right)\right)\right|\\
 & <Ce^{\frac{\pi\alpha}{576}}\,\left\Vert h\right\Vert _{L^{\infty}(D(0,1))}\frac{(2p)!e^{-\frac{A}{u}}}{u^{\frac{\alpha}{2}}}\,\sum_{k=1}^{2p}\frac{2^{7k}}{u^{k}}\\
 & <C^{\prime}\,\left\Vert h\right\Vert _{L^{\infty}(C_{1}(0))}\frac{(2p)!e^{-\frac{A}{u}}}{u^{\frac{\alpha}{2}+2p}}\,\frac{(128)^{2p+1}-u^{2p+1}}{128-u}\\
 & <C^{\prime\prime}\,\left\Vert h\right\Vert _{L^{\infty}(C_{1}(0))}\frac{2^{14p}(2p)!e^{-\frac{A}{u}}}{u^{\frac{\alpha}{2}+2p}}
\end{align*}
where we have invoked the maximum modulus principle.  This completes the proof
of (\ref{bound for Aup}). The proof of (\ref{bound for Aup for other range}) is analogous. 
\end{proof}

\bigskip{}

To conclude this section, let us recall the useful notation introduced
in \cite{DRRZ}. From now on, it will be convenient to write the terms of
the sequence $\left(\lambda_{j}\right)_{j\in\mathbb{N}}$ in polar
coordinates 
\begin{equation}
\frac{i\alpha}{2}-2\lambda_{j}:=r_{j}\,e^{i\theta_{j}},\,\,\,0<\theta_{j}<\pi.\label{polar coordinates}
\end{equation}
According to our assumption \ref{assumption 1}, we need to extend this definition to negative
indices: since $\lambda_{-j}=-\lambda_{j}$, then we have that $r_{-j}=r_{j}$
and $\theta_{-j}=\pi-\theta_{j}$.

Using these polar coordinates, we shall introduce a simple lemma which
gives an integral representation for the moments of $\tilde{F}_{z,\alpha}\left(\frac{\alpha}{4}+it\right)$.
Its proof follows closely the steps given in {[}\cite{RYCE}, pp. 44-45{]},
so we shall omit some details.

\begin{lemma}\label{lemma 2.4}
Let $-\frac{\pi}{4}<\omega<\frac{\pi}{4}$, $p\in\mathbb{N}_{0}$
and $\tilde{F}_{z,\alpha}\left(\frac{\alpha}{4}+it\right)$ be the
function defined by (\ref{function defining coooombinations}). Then
the following integral representation holds
\begin{align}
\intop_{0}^{\infty}t^{2p}\,\tilde{F}_{z,\alpha}\left(\frac{\alpha}{4}+it\right)\,\cosh\left(2\omega t\right)\,dt & =-\frac{8\pi}{2^{2p}}\,\sum_{j=1}^{\infty}c_{j}r_{j}^{2p}\left[\cos\left(2p\theta_{j}\right)\,\cos\left(\frac{\omega\alpha}{2}\right)\cosh\left(2\omega\lambda_{j}\right)+\sin\left(2p\theta_{j}\right)\,\sin\left(\frac{\omega\alpha}{2}\right)\sinh\left(2\omega\lambda_{j}\right)\right]\nonumber \\
+\frac{2\pi\,e^{z^{2}/8}}{2^{2p}}\,\text{Re} & \left(\frac{d^{2p}}{d\omega^{2p}}\left\{ \sum_{j\neq0}c_{j}e^{\frac{i\omega\alpha}{2}-2\omega\lambda_{j}}\left(e^{-z^{2}/8}+e^{z^{2}/8}\,\psi_{\alpha}\left(e^{2i\omega},z\right)\right)\right\} \right),\label{integral representation lemma 4}
\end{align}
where $\psi_{\alpha}(x,z)$ is the generalized Jacobi's $\psi-$function
(\ref{Definition Jacobi final version}) and $(r_{j},\theta_{j})$
are the polar coordinates attached to $\left(\lambda_{j}\right)_{j\in\mathbb{N}}$
defined by (\ref{polar coordinates}).
\end{lemma}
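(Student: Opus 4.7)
The plan is to convert the left-hand side of (\ref{integral representation lemma 4}) into a Fourier integral against $e^{2\omega t}$, evaluate that Fourier integral in closed form via the generalized Dixit transformation (\ref{equation in the setting of zeta alpha}), and then differentiate $2p$ times in $\omega$ to match the two terms on the right-hand side.

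First, since $\tilde{F}_{z,\alpha}(\tfrac{\alpha}{4}+it)$ is a real, even function of $t$ (as already noted after (\ref{symmetric function G})), the cosh expansion combined with $t^{2p}e^{2\omega t}=\tfrac{1}{2^{2p}}\tfrac{d^{2p}}{d\omega^{2p}}e^{2\omega t}$ gives
\[
\intop_{0}^{\infty}t^{2p}\tilde{F}_{z,\alpha}\!\left(\tfrac{\alpha}{4}+it\right)\cosh(2\omega t)\,dt=\frac{1}{2^{2p+1}}\frac{d^{2p}}{d\omega^{2p}}\intop_{-\infty}^{\infty}\tilde{F}_{z,\alpha}\!\left(\tfrac{\alpha}{4}+it\right)e^{2\omega t}\,dt,
\]
the differentiation being justified by the exponential decay in (\ref{estimate simple-1}), which dominates $e^{2\omega t}$ since $-\tfrac{\pi}{4}<\omega<\tfrac{\pi}{4}$.

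Next, I would expand $\tilde{F}_{z,\alpha}$ as a sum over $j\neq 0$, substitute $\tau=t+\lambda_{j}$ in each term (pulling out a factor $e^{-2\omega\lambda_{j}}$), and handle the two ${}_{1}F_{1}$-contributions separately. For the ${}_{1}F_{1}(\tfrac{\alpha}{4}-i\tau;\tfrac{\alpha}{2};\tfrac{z^{2}}{4})$-piece, reflecting $\tau\to-\tau$ and using the evenness of $\eta_{\alpha}(\tfrac{\alpha}{4}+i\tau)$ (from (\ref{completed Dirichlet series definition in lemmata section})) converts it into the ${}_{1}F_{1}(\tfrac{\alpha}{4}+i\tau;\tfrac{\alpha}{2};\tfrac{z^{2}}{4})$-piece with $\omega$ replaced by $-\omega$. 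Invoking (\ref{equation in the setting of zeta alpha}) with $z\mapsto iz$ and $x=e^{\pm 2i\omega}$, together with its second-line reformulation and the identity $\psi_{\alpha}(\cdot,-z)=\psi_{\alpha}(\cdot,z)$, evaluates each Fourier integral in closed form. Adding the two and pushing the real-valued prefactor $\sum_{j\neq0}c_{j}e^{-2\omega\lambda_{j}}=2\sum_{j\geq1}c_{j}\cosh(2\omega\lambda_{j})$ inside $\text{Re}$ yields the compact expression
\[
I(\omega):=\intop_{-\infty}^{\infty}\tilde{F}_{z,\alpha}\!\left(\tfrac{\alpha}{4}+it\right)e^{2\omega t}\,dt=4\pi\,\text{Re}\left\{\sum_{j\neq0}c_{j}e^{\frac{i\omega\alpha}{2}-2\omega\lambda_{j}}\bigl(e^{z^{2}/4}\psi_{\alpha}(e^{2i\omega},z)-1\bigr)\right\}.
\]

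Finally I would split $I(\omega)$ using the algebraic identity $e^{z^{2}/4}\psi_{\alpha}-1=e^{z^{2}/8}\bigl(e^{-z^{2}/8}+e^{z^{2}/8}\psi_{\alpha}\bigr)-2$. After applying $\tfrac{1}{2^{2p+1}}\tfrac{d^{2p}}{d\omega^{2p}}$, the first summand becomes exactly the second term on the right-hand side of (\ref{integral representation lemma 4}), while the second summand contributes $-\tfrac{4\pi}{2^{2p}}\text{Re}\{\sum_{j\neq0}c_{j}(i\alpha/2-2\lambda_{j})^{2p}e^{i\omega\alpha/2-2\omega\lambda_{j}}\}$. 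Writing $i\alpha/2-2\lambda_{j}=r_{j}e^{i\theta_{j}}$ by (\ref{polar coordinates}), pairing $j$ with $-j$ (so that $r_{-j}=r_{j}$, $\theta_{-j}=\pi-\theta_{j}$, $\lambda_{-j}=-\lambda_{j}$, $c_{-j}=c_{j}$, whence $e^{2ip\theta_{-j}}=e^{-2ip\theta_{j}}$), and expanding $\cos(\tfrac{\omega\alpha}{2}\pm 2p\theta_{j})$ via the product-to-sum identities produces exactly the first sum in (\ref{integral representation lemma 4}). The proof is really bookkeeping; the principal obstacle is keeping the substitution $z\mapsto iz$ and the $j\leftrightarrow-j$ symmetry straight. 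All analytical interchanges (sum/integral, derivative/integral) are controlled by (\ref{estimate simple-1}) together with $\sum_{j}|c_{j}|<\infty$ and the boundedness of $(\lambda_{j})$, exactly as in the analogous computation in \cite{RYCE}.
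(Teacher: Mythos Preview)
Your proposal is correct and follows essentially the same route as the paper's proof: both evaluate the Fourier integral of $\tilde{F}_{z,\alpha}(\tfrac{\alpha}{4}+it)$ against $e^{2\omega t}$ via the generalized Dixit transformation (\ref{equation in the setting of zeta alpha}), differentiate $2p$ times in $\omega$, and then perform the $j\leftrightarrow -j$ pairing exactly as in (\ref{trivial elementary cosh simplifications}). The only cosmetic difference is that the paper appeals to Kummer's identity (\ref{Kummer confluent transformation}) (and quotes the intermediate moment formula from \cite{RYCE}) to get the ${}_1F_1$ argument $\tfrac{z^2}{4}$, whereas you achieve the same effect by substituting $z\mapsto iz$ in (\ref{equation in the setting of zeta alpha}) together with the evenness of $\psi_\alpha(\cdot,z)$.
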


\begin{proof}
We start by using the integral representation given at the introduction
(\ref{equation in the setting of zeta alpha}), replacing there $x$
by $e^{2i\omega}$, $-\frac{\pi}{4}<\omega<\frac{\pi}{4}$: we obtain
the formula
\begin{equation}
\frac{1}{2\pi}\,\intop_{-\infty}^{\infty}\eta_{\alpha}\left(\frac{\alpha}{4}+it\right)\,_{1}F_{1}\left(\frac{\alpha}{4}+it;\,\frac{\alpha}{2};\,-\frac{z^{2}}{4}\right)\,e^{2\omega t}\,dt=e^{\frac{i\omega\alpha}{2}}\,\psi_{\alpha}\left(e^{2i\omega},z\right)-e^{-\frac{i\omega\,\alpha}{2}}e^{-z^{2}/4}.\label{formula with omega}
\end{equation}
Using Kummer's identity {[}\cite{roy}, p. 191, eq. (4.1.11){]},
\begin{equation}
_{1}F_{1}\left(a;\,c;\,x\right)=e^{x}\,_{1}F_{1}\left(c-a;\,c;\,-x\right),\label{Kummer confluent transformation}
\end{equation}
an equivalent version of (\ref{formula with omega}) reads
\begin{equation}
\frac{e^{-z^{2}/8}}{2\pi}\,\intop_{-\infty}^{\infty}\eta_{\alpha}\left(\frac{\alpha}{4}+it\right)\,_{1}F_{1}\left(\frac{\alpha}{4}-it;\,\frac{\alpha}{2};\,\frac{z^{2}}{4}\right)\,e^{2\omega t}\,dt=e^{\frac{i\omega\alpha}{2}}\,e^{z^{2}/8}\,\psi_{\alpha}\left(e^{2i\omega},z\right)-e^{-\frac{i\omega\,\alpha}{2}}e^{-z^{2}/8}.\label{after kummer in the whole proof}
\end{equation}
In {[}\cite{RYCE}, p. 43, eq. (3.42){]}, we have used (\ref{after kummer in the whole proof})
and the estimates (\ref{estimate simple-1}) to obtain the following
formula
\begin{align*}
\intop_{-\infty}^{\infty}t^{2p}\,F_{z,\alpha}\left(\frac{\alpha}{4}+it\right)\,e^{2\omega t}\,dt & =-\frac{8\pi}{2^{2p}}\,\sum_{j=1}^{\infty}c_{j}\,r_{j}^{2p}e^{-2\omega\lambda_{j}}\,\cos\left(2p\,\theta_{j}+\frac{\omega\alpha}{2}\right)\\
+\frac{4\pi}{2^{2p}}\,\text{Re} & \left(e^{z^{2}/8}\,\frac{d^{2p}}{d\omega^{2p}}\left\{ \sum_{j=1}^{\infty}c_{j}e^{\frac{i\omega\alpha}{2}-2\omega\lambda_{j}}\left(e^{-z^{2}/8}+e^{z^{2}/8}\,\psi_{\alpha}\left(e^{2i\omega},z\right)\right)\right\} \right),
\end{align*}
where $0<\omega<\frac{\pi}{4}$, $z\in\mathbb{C}$, $\psi_{\alpha}(x,z)$
is the generalized theta function (\ref{Definition Jacobi final version})
and $F_{z,\alpha}(s)$ is the function given by (cf. (\ref{function defining coooombinations-1})
above)
\[
F_{z,\alpha}(s):=\sum_{j=1}^{\infty}c_{j}\,\eta_{\alpha}\left(s+i\lambda_{j}\right)\,\left\{ _{1}F_{1}\left(\frac{\alpha}{2}-s-i\lambda_{j};\,\frac{\alpha}{2};\,\frac{z^{2}}{4}\right)+\,_{1}F_{1}\left(\frac{\alpha}{2}-\overline{s}+i\lambda_{j};\,\frac{\alpha}{2};\,\frac{\overline{z}^{2}}{4}\right)\right\} .
\]
According to assumption \ref{assumption 1}, our symmetric shifted combination,
$\tilde{F}_{z,\alpha}(s)$, has the same expression as $F_{z,\alpha}(s)$
with an additional extension to the negative integers (see (\ref{symmetric function F}) above). Therefore, assuming also that $z\in\mathbb{R}$ (see assumption \ref{assumption 2}), we immediately find that
\begin{align}
\intop_{-\infty}^{\infty}t^{2p}\,\tilde{F}_{z,\alpha}\left(\frac{\alpha}{4}+it\right)\,e^{2\omega t}\,dt & =-\frac{8\pi}{2^{2p}}\,\sum_{j\neq0}c_{j}\,r_{j}^{2p}e^{-2\omega\lambda_{j}}\,\cos\left(2p\,\theta_{j}+\frac{\omega\alpha}{2}\right)\nonumber \\
+\frac{4\pi}{2^{2p}}e^{z^{2}/8}\,\text{Re} & \left(\frac{d^{2p}}{d\omega^{2p}}\left\{ \sum_{j\neq0}c_{j}e^{\frac{i\omega\alpha}{2}-2\omega\lambda_{j}}\left(e^{-z^{2}/8}+e^{z^{2}/8}\,\psi_{\alpha}\left(e^{2i\omega},z\right)\right)\right\} \right).\label{formula at beginning}
\end{align}
But it is clear that the first infinite series admits the expression
\begin{align}
\sum_{j\neq0}c_{j}\,r_{j}^{2p}e^{-2\omega\lambda_{j}}\,\cos\left(2p\,\theta_{j}+\frac{\omega\alpha}{2}\right)
=\sum_{j=1}^{\infty}c_{j}r_{j}^{2p}\left[e^{-2\omega\lambda_{j}}\cos\left(2p\,\theta_{j}+\frac{\omega\alpha}{2}\right)+e^{2\omega\lambda_{j}}\cos\left(2p\,\theta_{j}-\frac{\omega\alpha}{2}\right)\right]\nonumber \\
=2\,\sum_{j=1}^{\infty}c_{j}r_{j}^{2p}\left[\cos\left(2p\theta_{j}\right)\,\cos\left(\frac{\omega\alpha}{2}\right)\cosh\left(2\omega\lambda_{j}\right)+\sin\left(2p\theta_{j}\right)\,\sin\left(\frac{\omega\alpha}{2}\right)\sinh\left(2\omega\lambda_{j}\right)\right],\label{trivial elementary cosh simplifications}
\end{align}
because $\lambda_{-j}=-\lambda_{j}$, $r_{-j}=r_{j}$ and $\theta_{-j}=\pi-\theta_{j}$
by hypothesis. Since $\tilde{F}_{z,\alpha}\left(\frac{\alpha}{4}+it\right)$
is a real and an even function of $t$, using the previous expression
allows to rewrite (\ref{formula at beginning}) in the form
\begin{align*}
\intop_{0}^{\infty}t^{2p}\,\tilde{F}_{z,\alpha}\left(\frac{\alpha}{4}+it\right)\,\cosh\left(2\omega t\right)\,dt & =-\frac{8\pi}{2^{2p}}\,\sum_{j=1}^{\infty}c_{j}r_{j}^{2p}\left[\cos\left(2p\theta_{j}\right)\,\cos\left(\frac{\omega\alpha}{2}\right)\cosh\left(2\omega\lambda_{j}\right)+\sin\left(2p\theta_{j}\right)\,\sin\left(\frac{\omega\alpha}{2}\right)\sinh\left(2\omega\lambda_{j}\right)\right]\\
+\frac{2\pi\,e^{z^{2}/8}}{2^{2p}}\,\text{Re} & \left(\frac{d^{2p}}{d\omega^{2p}}\left\{ \sum_{j\neq0}c_{j}e^{\frac{i\omega\alpha}{2}-2\omega\lambda_{j}}\left(e^{-z^{2}/8}+e^{z^{2}/8}\,\psi_{\alpha}\left(e^{2i\omega},z\right)\right)\right\} \right),
\end{align*}
which completes the proof.
\end{proof}

\section{Proof of Theorem \ref{theorem 1.1}}\label{proof theorem 1.1 section}

\subsection{Outline of the Proof}
At the core of our method is the following lemma, which is an immediate
consequence of a Theorem of Fej\'er \cite{titchmarsh_zetafunction,fejer,fekete_zeros}.
\begin{lemma}\label{Fejer lemma}
Let $\left(p_{n}\right)_{n\in\mathbb{N}_{0}}$ be a subsequence of
$\mathbb{N}_{0}$ with $p_{0}:=0$. Then the number of variations
of sign in the interval $[0,a]$ of a real continuous function $f(x)$
is not less than the number of variations of sign of the following
sequence
\[
\mathcal{F}_{p_{n}}(a):=\begin{cases}
f(0) & n=0\\
\intop_{0}^{a}f(t)\,t^{p_{n}-1}dt & n\geq1.
\end{cases}
\]
\end{lemma}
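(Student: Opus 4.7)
I plan to adapt the classical Fej\'er--Fekete argument by building a Vandermonde-type determinantal identity directly from the moment data. Arguing by contradiction, suppose $f$ has exactly $V$ sign changes in $[0,a]$, at points $0<x_{1}<\cdots<x_{V}<a$, while the sequence $(\mathcal{F}_{p_{n}}(a))_{n\geq 0}$ exhibits at least $V+1$ sign changes. Then I can extract $V+2$ indices $n_{0}<n_{1}<\cdots<n_{V+1}$ for which $\mathcal{F}_{p_{n_{0}}}(a),\ldots,\mathcal{F}_{p_{n_{V+1}}}(a)$ are nonzero and alternate in sign. The goal is to produce a single linear relation among these $V+2$ values whose coefficients are forced to share a common sign, contradicting the alternation.

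The key device is a linear functional $\Lambda$ on polynomials in $y$ defined by the convention of the lemma, $\Lambda[y^{p_{n_{k}}}]:=\mathcal{F}_{p_{n_{k}}}(a)$, extended by linearity; this is consistent with $\Lambda[1]=f(0)$ (when $p_{n_{0}}=0$) and $\Lambda[y^{p}]=\int_{0}^{a}f(t)t^{p-1}\,dt$ for $p\geq 1$. I would then consider the $(V+2)\times(V+2)$ determinant
\[
D(y)=\det\!\begin{pmatrix}
y^{p_{n_{0}}} & y^{p_{n_{1}}} & \cdots & y^{p_{n_{V+1}}}\\
x_{1}^{p_{n_{0}}} & x_{1}^{p_{n_{1}}} & \cdots & x_{1}^{p_{n_{V+1}}}\\
\vdots & \vdots & & \vdots\\
x_{V}^{p_{n_{0}}} & x_{V}^{p_{n_{1}}} & \cdots & x_{V}^{p_{n_{V+1}}}\\
\mathcal{F}_{p_{n_{0}}}(a) & \mathcal{F}_{p_{n_{1}}}(a) & \cdots & \mathcal{F}_{p_{n_{V+1}}}(a)
\end{pmatrix},
\]
which is a polynomial in $y$. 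Applying $\Lambda$ converts the first row into a copy of the last row, so $\Lambda[D(y)]=0$. Expanding $D$ along its first row and pushing $\Lambda$ through termwise yields the identity
\[
\sum_{k=0}^{V+1}(-1)^{k}\,\mathcal{F}_{p_{n_{k}}}(a)\,\Delta_{k}=0,
\]
where $\Delta_{k}=\Lambda[W_{k}(y)]$ is the $(V+1)\times(V+1)$ cofactor, with $W_{k}(y)$ the generalized Vandermonde determinant on points $(x_{1},\ldots,x_{V},y)$ and exponents $\{p_{n_{j}}\}_{j\neq k}$.

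The decisive step is showing that the cofactors $\Delta_{k}$ all share a common nonzero sign. P\'olya's theorem on generalized Vandermondes guarantees that $W_{k}(y)$ vanishes exactly at $y=x_{1},\ldots,x_{V}$ and that its sign on $(0,\infty)\setminus\{x_{i}\}$ matches that of $\prod_{i=1}^{V}(y-x_{i})$. Since $f$ also changes sign precisely at those same points, the product $f(y)W_{k}(y)$ has constant sign on $[0,a]$; combined with the structural decomposition $\Lambda[P]=P(0)f(0)+\int_{0}^{a}\frac{P(y)-P(0)}{y}f(y)\,dy$, this transfers a common sign to $\Delta_{k}$, uniformly in $k$. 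Together with the alternation of $\mathcal{F}_{p_{n_{k}}}(a)$ and the factor $(-1)^{k}$, every term in the identity above carries the same sign, so a sum of $V+2$ nonzero same-sign terms cannot vanish---a contradiction.

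The main obstacle I anticipate is precisely the uniform sign analysis of $\Delta_{k}$ across the boundary case $k=0$ (where $W_{0}$ has no constant term and $\Lambda$ collapses to a pure integral against $f(y)/y$) versus $k\geq 1$ (where $W_{k}(0)\neq 0$ and the $f(0)$-contribution in $\Lambda$ enters nontrivially). One must check that the boundary term $W_{k}(0)f(0)$ does not disturb the sign consistency furnished by P\'olya's theorem on $(0,a)$. This reconciliation is what distinguishes the statement of the lemma from the classical Fej\'er lemma for purely integral moments.
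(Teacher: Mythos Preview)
The paper does not supply its own proof of this lemma; it is stated as a classical consequence of a theorem of Fej\'er, with references to Fej\'er, Fekete, and Titchmarsh. Your determinantal approach is indeed the classical one, and the identity $\sum_{k}(-1)^{k}\mathcal{F}_{p_{n_{k}}}(a)\,\Delta_{k}=0$ is correct (whether one reaches it by expanding $D$ along the first row or the last, the two are consistent once you note that the constant first-row cofactors coincide with $\Lambda[W_{k}]$).

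The genuine gap is exactly the one you flag. Your inference ``$f(y)W_{k}(y)$ has constant sign on $(0,a)$, hence this sign transfers to $\Delta_{k}=\Lambda[W_{k}]$'' is only immediate when $W_{k}(0)=0$, i.e.\ when all exponents appearing in $W_{k}$ are $\geq 1$. In the boundary situation $n_{0}=0$ and $k\geq 1$, the exponent $0$ is present, $W_{k}(0)>0$, and
\[
\Lambda[W_{k}]=W_{k}(0)\,f(0)+\int_{0}^{a}f(t)\,\frac{W_{k}(t)-W_{k}(0)}{t}\,dt.
\]
The integrand on the right is \emph{not} $f(t)W_{k}(t)/t$; in fact near $t=0$ one has $W_{k}(t)<W_{k}(0)$ (the coefficient of the lowest positive power in $W_{k}$ is negative by the cofactor expansion), so $f(t)(W_{k}(t)-W_{k}(0))/t$ changes sign there. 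Thus the argument as written does not close.

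A clean way to repair this within your framework: replace the exponent $0$ by a small $\varepsilon>0$. Since $f$ is continuous with $f(0)\neq 0$, one has $\varepsilon\int_{0}^{a}f(t)\,t^{\varepsilon-1}\,dt\to f(0)$ as $\varepsilon\to 0^{+}$, so for $\varepsilon$ sufficiently small $\operatorname{sign}\int_{0}^{a}f(t)\,t^{\varepsilon-1}\,dt=\operatorname{sign}f(0)$. Running your determinant with the real exponents $\varepsilon<p_{n_{1}}<\cdots<p_{n_{V+1}}$, every $W_{k}$ now satisfies $W_{k}(0)=0$, and the sign argument applies uniformly: each $\Delta_{k}=\int_{0}^{a}f(t)\,W_{k}(t)/t\,dt$ inherits the common sign of $f\cdot W_{k}$. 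Generalized Vandermonde positivity holds for arbitrary real increasing exponents, so nothing else changes, and the contradiction goes through.
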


\bigskip{}

Our proof of Theorem \ref{theorem 1.1} is essentially divided in four parts: in
the first of these, we employ lemmas \ref{lemma 2.3} and \ref{lemma 2.4} to rewrite (\ref{integral representation lemma 4})
in a form which is useful to apply the zero counting method suggested
by the previous lemma.
By the conditions of our Theorem \ref{theorem 1.1}, the sequence $\left(\lambda_{j}\right)_{j\in\mathbb{N}}$
attains its bounds and is made of distinct elements. Hence, we know
that exists some $M\in\mathbb{N}$ such that
\begin{equation}
|\lambda_{M}|=\max_{j\in\mathbb{N}}\{|\lambda_{j}|\},\,\,\,\,\,\lambda_{j}\neq\lambda_{M}\,\,\,\text{for}\,\,\,j\neq M.\label{lambda M condition attaining bounds}
\end{equation}
This implies that $r_{j}<r_{M}$ for $j\neq M$ by definition of the
polar coordinates (\ref{polar coordinates}). During the proof of
our result, we will assume without any loss of generality that $\lambda_{M}<0$.\footnote{There is no loss of generality because we can interchange $\lambda_{j}$
with $-\lambda_{j}$ in the function $\tilde{F}_{z,\alpha}\left(s\right)$.} According to the representation (\ref{polar coordinates}), this
assumption implies that $\theta_{M}\in\left(0,\frac{\pi}{2}\right)$.

Using the existence of $\lambda_{M}$ given in (\ref{lambda M condition attaining bounds}), the first section of our proof is devoted to show that, for $0<u<\frac{\pi}{4},$
\begin{align}
\intop_{0}^{\infty}t^{2p}\,\tilde{F}_{z,\alpha}\left(\frac{\alpha}{4}+it\right)\,\cosh\left(\left(\frac{\pi}{2}-2u\right)t\right)\,dt & =-\frac{8\pi c_{M}r_{M}^{2p}}{2^{2p}}\left(1+e^{z^{2}/8}\sinh\left(\frac{z^{2}}{8}\right)\right)\mathcal{G}_{p}\left(\theta_{M}\right) \nonumber \\
\times\left\{ 1+\frac{\mathcal{B}_{\alpha,p}(u)}{2c_{M}r_{M}^{2p}\,\mathcal{G}_{p}\left(\theta_{M}\right)}+\tilde{E}\left(X,z\right)+\tilde{H}\left(X,z\right)\right\}& +\frac{2\pi\,e^{z^{2}/8}}{2^{2p}}\,\text{Re}\left[\mathcal{A}_{\alpha,p}(u,z)\right],\label{first formula outlined in the proof introduction}
\end{align}
where $\mathcal{A}_{\alpha,p}(u,z)$ and the terms inside the braces
will be specified later. Moreover, $\theta_{M}$ is the angular coordinate of $\frac{i\alpha}{2}-2\lambda_{M}:=r_{M}e^{i\theta_{M}}$ in the convention (\ref{polar coordinates}) and $(\mathcal{G}_{p}(\theta_{M}))_{p\in\mathbb{N}}$
is the sequence given by
\begin{equation}
\left(\mathcal{G}_{p}(\theta_{M})\right)_{p\in\mathbb{N}}:=\cos(2p\theta_{M})\,\cos\left(\frac{\pi\alpha}{8}\right)\cosh\left(\frac{\pi\lambda_{M}}{2}\right)+\sin(2p\theta_{M})\,\sin\left(\frac{\pi\alpha}{8}\right)\sinh\left(\frac{\pi\lambda_{M}}{2}\right).\label{definition of sequnece Gp}
\end{equation}
In the second part we construct a sequence of integers $\left(q_{n}\right)_{n\in\mathbb{N}}$
such that, for every $n\in\mathbb{N}$, $\mathcal{G}_{q_{n}}(\theta_{M})$
and $\mathcal{G}_{q_{n+1}}(\theta_{M})$ have always different signs.
In the third part of our proof, we prove that the terms
\[
\mathcal{A}_{\alpha,p}(u,z),\,\frac{\mathcal{B}_{\alpha,p}(u)}{2c_{M}r_{M}^{2p}\,\mathcal{G}_{p}\left(\theta_{M}\right)},\,\tilde{E}\left(X,z\right),\,\tilde{H}\left(X,z\right)
\]
are very small if we pick $u$ small enough and take $p$ as a very
large integer. Finally, in the fourth part of our proof we combine
our collected data with Fej\'er's lemma \ref{Fejer lemma} to reach the desired conclusion.

We should remark that, throughout this proof, we will only work with
the assumption that $\alpha>2$, so that we will apply bounds (\ref{bound derivative !})
and (\ref{bound for Aup}) respectively. Of course, in the range $0\leq\alpha\le2$
the proof is entirely analogous but the auxiliary computations are
slightly different.

\subsection{A suitable integral representation}\label{suitable integral section}

 Let $0<u<\frac{\pi}{4}$ and take $\omega=\frac{\pi}{4}-u$ on the
integral representation (\ref{integral representation lemma 4}).
If $\mathcal{I}_{2p}(u)$ denotes the integral
\[
\mathcal{I}_{2p}(u):=\intop_{0}^{\infty}t^{2p}\,\tilde{F}_{z,\alpha}\left(\frac{\alpha}{4}+it\right)\,\cosh\left(\left(\frac{\pi}{2}-2u\right)t\right)\,dt,\,\,\,\,0<u<\frac{\pi}{4},
\]
then, according to (\ref{integral representation lemma 4}), $\mathcal{I}_{2p}(u)$ can be explicitly given by 
\begin{align}
-\frac{8\pi}{2^{2p}}&\,\sum_{j=1}^{\infty}c_{j}r_{j}^{2p}\left[\cos\left(2p\theta_{j}\right)\,\cos\left(\frac{\alpha}{2}\left(\frac{\pi}{4}-u\right)\right)\cosh\left(2\lambda_{j}\left(\frac{\pi}{4}-u\right)\right)+\sin\left(2p\theta_{j}\right)\,\sin\left(\frac{\alpha}{2}\left(\frac{\pi}{4}-u\right)\right)\sinh\left(2\lambda_{j}\left(\frac{\pi}{4}-u\right)\right)\right]\nonumber \\
&+\frac{2\pi\,e^{z^{2}/8}}{2^{2p}}\,\text{Re}\left(\frac{d^{2p}}{du^{2p}}\left\{ e^{\frac{i\alpha}{2}\left(\frac{\pi}{4}-u\right)}\sum_{j\neq0}c_{j}e^{-2\lambda_{j}\,\left(\frac{\pi}{4}-u\right)}\,\left(e^{-z^{2}/8}+e^{z^{2}/8}\,\psi_{\alpha}\left(ie^{-2iu},z\right)\right)\right\} \right)=\mathcal{I}_{2p}(u).\label{rewriting I2p(u)}
\end{align}
We shall rewrite (\ref{rewriting I2p(u)}) by appealing to Lemma \ref{lemma 2.3}. Consider the following function
\begin{equation}
h_{\alpha}(u)=e^{\frac{i\alpha}{2}\left(\frac{\pi}{4}-u\right)}\sum_{j\neq0}c_{j}e^{-2\lambda_{j}\,\left(\frac{\pi}{4}-u\right)}.\label{particular analytic function}
\end{equation}
Since $\sum|c_{j}|<\infty$, it is clear that $h_{\alpha}(u)$ is
an analytic function of $u$. Therefore, by (\ref{formula that defines A alpha, p})
and the notation (\ref{polar coordinates}),
\begin{align}
\frac{d^{2p}}{du^{2p}}\left\{ \left(e^{-z^{2}/8}+e^{z^{2}/8}\,\psi_{\alpha}\left(ie^{-2iu},z\right)\right)\,e^{\frac{i\alpha}{2}\left(\frac{\pi}{4}-u\right)}\sum_{j\neq0}c_{j}e^{-2\lambda_{j}\,\left(\frac{\pi}{4}-u\right)}\right\}  & =-2\sinh\left(\frac{z^{2}}{8}\right)h_{\alpha}^{(2p)}(u)+\mathcal{A}_{\alpha,p}(u,z)\nonumber \\
=-2\sinh\left(\frac{z^{2}}{8}\right)\,e^{\frac{i\alpha}{2}\left(\frac{\pi}{4}-u\right)}\,\sum_{j\neq0}c_{j}r_{j}^{2p}e^{2ip\theta_{j}}\, & e^{-2\left(\frac{\pi}{4}-u\right)\lambda_{j}}+\mathcal{A}_{\alpha,p}(u,z).\label{formula after application of lemma derivative}
\end{align}
We will now find a suitable bound for $\mathcal{A}_{\alpha,p}(u,z)$,
which will invoke Lemma \ref{lemma 2.3}. Since $\left(\lambda_{j}\right)_{j\in\mathbb{N}}$
is bounded by hypothesis, there exists some $\mu$ such that $|\lambda_{j}|<\mu$
for every $j\in\mathbb{N}$: hence
\begin{equation}
\left\Vert h_{\alpha}\right\Vert _{L^{\infty}(C_{0}(1))}\leq e^{\frac{\alpha}{2}}\sum_{j\neq0}|c_{j}|e^{\left(2-\frac{\pi}{2}\right)\lambda_{j}}<e^{\left(4-\pi+\alpha\right)\mu}\,\sum_{j\neq0}|c_{j}|\leq\mathcal{M},\label{bound for the h alpha Linfinity}
\end{equation}
where $\mathcal{M}$ only depends on the sequence the value of the
convergent series $\sum_{j\neq0}|c_{j}|:=2\sum_{j=1}^{\infty}|c_{j}|$. Recalling (\ref{bound for Aup}),
we arrive at the following bound
\begin{equation}
|\mathcal{A}_{\alpha,p}(u,z)|<D\frac{2^{14p}(2p)!e^{-\frac{A}{u}}}{u^{\frac{\alpha}{2}+2p}}\left\Vert h\right\Vert _{L^{\infty}(C_{1}(0))}\leq\mathscr{D}\frac{2^{14p}(2p)!e^{-\frac{A}{u}}}{u^{\frac{\alpha}{2}+2p}},\label{bound for Aalpha,p}
\end{equation}
for some $\mathscr{D}$ depending only on the sequence $\left(c_{j}\right)_{j\in\mathbb{N}}$
and $\alpha$. Returning to the second term of (\ref{rewriting I2p(u)})
and using (\ref{formula after application of lemma derivative}),
we find that 
\begin{align}
e^{z^{2}/8}\text{Re}\left(\,\frac{d^{2p}}{du^{2p}}\left\{ \left(e^{-z^{2}/8}+e^{z^{2}/8}\,\psi_{\alpha}\left(ie^{-2iu},z\right)\right)\,e^{\frac{i\alpha}{2}\left(\frac{\pi}{4}-u\right)}\sum_{j\neq0}c_{j}e^{-2\left(\frac{\pi}{4}-u\right)\lambda_{j}}\right\} \right)\nonumber \\
=e^{z^{2}/8}\,\text{Re}\left[\mathcal{A}_{\alpha,p}(u,z)\right]-2e^{\frac{z^{2}}{8}}\sinh\left(\frac{z^{2}}{8}\right)\,\sum_{j\neq0}c_{j}r_{j}^{2p}e^{-2\left(\frac{\pi}{4}-u\right)\lambda_{j}}\,\cos\left(2p\theta_{j}+\frac{\pi\alpha}{8}-\frac{\alpha}{2}u\right).\label{the real part explicit}
\end{align}
We can rewrite the infinite series on the previous expression by repeating the same computations
leading to (\ref{trivial elementary cosh simplifications}): we see
that 
\begin{align}
2\,\sum_{j=1}^{\infty}c_{j}r_{j}^{2p}\left\{ \cos\left(2p\theta_{j}\right)\,\cos\left(\frac{\alpha}{2}\left(\frac{\pi}{4}-u\right)\right)\cosh\left(2\left(\frac{\pi}{4}-u\right)\lambda_{j}\right)+\sin\left(2p\theta_{j}\right)\,\sin\left(\frac{\alpha}{2}\left(\frac{\pi}{4}-u\right)\right)\sinh\left(2\left(\frac{\pi}{4}-u\right)\lambda_{j}\right)\right\} \nonumber \protect\\
=\sum_{j\protect\neq0}c_{j}r_{j}^{2p}e^{-2\left(\frac{\pi}{4}-u\right)\lambda_{j}}\,\cos\left(2p\theta_{j}+\frac{\pi\alpha}{8}-\frac{\alpha}{2}u\right),\label{suitable simplification}
\end{align}
and so, after combining (\ref{suitable simplification}) with (\ref{rewriting I2p(u)}),
we derive the representation
\begin{align}
&\mathcal{I}_{2p}(u)=\frac{2\pi\,e^{z^{2}/8}}{2^{2p}}\,\text{Re}\left[\mathcal{A}_{\alpha,p}(u,z)\right]-\frac{8\pi}{2^{2p}}\left(1+e^{z^{2}/8}\sinh\left(\frac{z^{2}}{8}\right)\right)\nonumber\\
&\times\sum_{j=1}^{\infty}c_{j}\,r_{j}^{2p}\left\{ \cos\left(2p\theta_{j}\right)\,\cos\left(\frac{\alpha}{2}\left(\frac{\pi}{4}-u\right)\right)\cosh\left(2\left(\frac{\pi}{4}-u\right)\lambda_{j}\right)+\sin\left(2p\theta_{j}\right)\,\sin\left(\frac{\alpha}{2}\left(\frac{\pi}{4}-u\right)\right)\sinh\left(2\left(\frac{\pi}{4}-u\right)\lambda_{j}\right)\right\}.\label{beginning of the proof after lemma}
\end{align}
Of course, by (\ref{the real part explicit}) and (\ref{rewriting I2p(u)}), an equivalent way of writing (\ref{beginning of the proof after lemma}) is returning to the sum over $j\in\mathbb{Z}\setminus\{0\}$ and get
\begin{align}
\mathcal{I}_{2p}(u) & =-\frac{4\pi}{2^{2p}}\left(1+e^{z^{2}/8}\sinh\left(\frac{z^{2}}{8}\right)\right)\,\sum_{j\neq0}c_{j}\,r_{j}^{2p}e^{-2\left(\frac{\pi}{4}-u\right)\lambda_{j}}\,\cos\left(2p\,\theta_{j}+\frac{\pi\alpha}{8}-\frac{\alpha}{2}u\right)\nonumber \\
 & +\frac{2\pi\,e^{z^{2}/8}}{2^{2p}}\,\text{Re}\left[\mathcal{A}_{\alpha,p}(u,z)\right].\label{simplification almost endinggggg}
\end{align}

We will now analyze the infinite series on the first term on the right
side of (\ref{beginning of the proof after lemma}) and approximate
it by its value for $u=0$. For this analysis it will be easier to manipulate the more compact expression (\ref{simplification almost endinggggg}). By the mean value theorem and the existence
of an element $\lambda_{M}$ satisfying (\ref{lambda M condition attaining bounds}),
we know that
\begin{equation}
\left|\sum_{j\neq0}c_{j}\,r_{j}^{2p}e^{-2\left(\frac{\pi}{4}-u\right)\lambda_{j}}\,\left\{ \cos\left(2p\,\theta_{j}+\frac{\pi\alpha}{8}-\frac{\alpha}{2}u\right)-\cos\left(2p\,\theta_{j}+\frac{\pi\alpha}{8}\right)\right\} \right|
\leq\frac{\alpha}{2}e^{\frac{\pi}{2}|\lambda_{M}|}r_{M}^{2p}\sum_{j\neq0}|c_{j}|\,u\leq C_{\alpha}\,r_{M}^{2p}\,u,\label{a mean value estimate}
\end{equation}
where, by the definition (\ref{polar coordinates}), $\frac{i\alpha}{2}-2\lambda_{M}=r_{M}e^{i\theta_{M}}$.
Thus, we can write the infinite series in (\ref{simplification almost endinggggg}) in the approximate form
\begin{align}
\sum_{j\neq0}c_{j}\,r_{j}^{2p}e^{-2\left(\frac{\pi}{4}-u\right)\lambda_{j}}&\,\cos\left(2p\,\theta_{j}+\frac{\pi\alpha}{8}-\frac{\alpha}{2}u\right) =\sum_{j\neq0}c_{j}\,r_{j}^{2p}e^{-2\left(\frac{\pi}{4}-u\right)\lambda_{j}}\cos\left(2p\,\theta_{j}+\frac{\pi\alpha}{8}\right)+\mathcal{B}_{\alpha,p}(u)\nonumber \\
=2\,\sum_{j=1}^{\infty}c_{j}\,r_{j}^{2p} & \left\{ \cos\left(2p\theta_{j}\right)\,\cos\left(\frac{\pi\alpha}{8}\right)\cosh\left(\frac{\pi}{2}\lambda_{j}\right)+\sin\left(2p\theta_{j}\right)\,\sin\left(\frac{\pi\alpha}{8}\right)\sinh\left(\frac{\pi}{2}\lambda_{j}\right)\right\} +\mathcal{B}_{\alpha,p}(u),\label{series on (111)}
\end{align}
where, according to (\ref{a mean value estimate}), $\mathcal{B}_{\alpha,p}(u)$
satisfies the estimate
\begin{equation}
|\mathcal{B}_{\alpha,p}(u)|\leq C_{\alpha}\,r_{M}^{2p}\,u.\label{Balpha p estimate as u tends to zero}
\end{equation}
Assume that $p\in\mathbb{N}$ is such that
\begin{equation}
\mathcal{G}_{p}(\theta_{M}):=\cos\left(2p\theta_{M}\right)\,\cos\left(\frac{\pi\alpha}{8}\right)\cosh\left(\frac{\pi}{2}\lambda_{M}\right)+\sin\left(2p\theta_{M}\right)\,\sin\left(\frac{\pi\alpha}{8}\right)\sinh\left(\frac{\pi}{2}\lambda_{M}\right)\neq0.\label{Gp first deffffition}
\end{equation}
Then, for every $p$ satisfying (\ref{Gp first deffffition}), we
can write (\ref{series on (111)}) as 
\begin{align}
\sum_{j\neq0}c_{j}\,r_{j}^{2p}e^{-2\left(\frac{\pi}{4}-u\right)\lambda_{j}}\,\cos\left(2p\,\theta_{j}+\frac{\pi\alpha}{8}-\frac{\alpha}{2}u\right) & =2\,\sum_{j=1}^{\infty}c_{j}r_{j}^{2p}\,\mathcal{G}_{p}\left(\theta_{j}\right)+\mathcal{B}_{\alpha,p}(u)\nonumber \\
=2c_{M}r_{M}^{2p}\,\mathcal{G}_{p}\left(\theta_{M}\right) & \left\{ 1+\frac{\mathcal{B}_{\alpha,p}(u)}{2c_{M}r_{M}^{2p}\,\mathcal{G}_{p}\left(\theta_{M}\right)}+\tilde{E}\left(X,z\right)+\tilde{H}\left(X,z\right)\right\} ,\label{putting term outside with maximum}
\end{align}
where
\begin{equation}
\tilde{E}(X,z):=\sum_{j\neq M,\,j\leq X}\frac{c_{j}}{c_{M}}\,\left(\frac{r_{j}}{r_{M}}\right)^{2p}\,\frac{\cos\left(2p\theta_{j}\right)\,\cos\left(\frac{\pi\alpha}{8}\right)\cosh\left(\frac{\pi}{2}\lambda_{j}\right)+\sin\left(2p\theta_{j}\right)\,\sin\left(\frac{\pi\alpha}{8}\right)\sinh\left(\frac{\pi}{2}\lambda_{j}\right)}{\cos\left(2p\theta_{M}\right)\,\cos\left(\frac{\pi\alpha}{8}\right)\cosh\left(\frac{\pi}{2}\lambda_{M}\right)+\sin\left(2p\theta_{M}\right)\,\sin\left(\frac{\pi\alpha}{8}\right)\sinh\left(\frac{\pi}{2}\lambda_{M}\right)}\label{E(X,z)}
\end{equation}
and
\begin{equation}
\tilde{H}\left(X,z\right):=\sum_{j\neq M,\,j>X}\frac{c_{j}}{c_{M}}\,\left(\frac{r_{j}}{r_{M}}\right)^{2p}\,\frac{\cos\left(2p\theta_{j}\right)\,\cos\left(\frac{\pi\alpha}{8}\right)\cosh\left(\frac{\pi}{2}\lambda_{j}\right)+\sin\left(2p\theta_{j}\right)\,\sin\left(\frac{\pi\alpha}{8}\right)\sinh\left(\frac{\pi}{2}\lambda_{j}\right)}{\cos\left(2p\theta_{M}\right)\,\cos\left(\frac{\pi\alpha}{8}\right)\cosh\left(\frac{\pi}{2}\lambda_{M}\right)+\sin\left(2p\theta_{M}\right)\,\sin\left(\frac{\pi\alpha}{8}\right)\sinh\left(\frac{\pi}{2}\lambda_{M}\right)},\label{H(X,z)}
\end{equation}
for some large parameter $X$ that will be chosen later.  Using (\ref{simplification almost endinggggg}),
we arrive at the expression
\begin{align}
\mathcal{I}_{2p}(u) & =-\frac{8\pi c_{M}r_{M}^{2p}}{2^{2p}}\left(1+e^{z^{2}/8}\sinh\left(\frac{z^{2}}{8}\right)\right)\mathcal{G}_{p}\left(\theta_{M}\right)\left\{ 1+\frac{\mathcal{B}_{\alpha,p}(u)}{2c_{M}r_{M}^{2p}\,\mathcal{G}_{p}\left(\theta_{M}\right)}+\tilde{E}\left(X,z\right)+\tilde{H}\left(X,z\right)\right\} \nonumber \\
 & +\frac{2\pi\,e^{z^{2}/8}}{2^{2p}}\,\text{Re}\left[\mathcal{A}_{\alpha,p}(u,z)\right],\label{integral representation with Gp simplified}
\end{align}
which is exactly the one claimed at the introduction of our proof,
(\ref{first formula outlined in the proof introduction}). 

In the remaining parts of our proof, the strategy will be to see that
$\left(\mathcal{G}_{p}(\theta_{M})\right)_{p\in\mathbb{N}}$ is the
dominant term in (\ref{putting term outside with maximum}) and in
(\ref{integral representation with Gp simplified}). This will be concluded
after showing that $\mathcal{A}_{\alpha,p}(u,z)$, $\mathcal{B}_{\alpha,p}(u)$,
$\tilde{E}(X,z)$ and $\tilde{H}(X,z)$ are very small for infinitely
many values of $p$ and for sufficiently small $u$. Since (\ref{definition of sequnece Gp})
will change its sign infinitely often for a suitable sequence of integers
$\left(q_{n}\right)_{n\in\mathbb{N}}$, our proof will be concluded
after this observation. 

\subsection{Studying the sign changes of $\left(\mathcal{G}_{p}(\theta_{M})\right)_{p\in\mathbb{N}}$}

We will now show that we can always construct a sequence $(q_{n})_{n\in\mathbb{N}}\subset\mathbb{N}$
for which $\mathcal{G}_{q_{n}}(\theta_{M})$ and $\mathcal{G}_{q_{n+1}}(\theta_{M})$
always have a different sign. To that end, we divide our construction
in two cases: $\frac{\theta_{M}}{\pi}\notin\mathbb{Q}$ or $\frac{\theta_{M}}{\pi}\in\mathbb{Q}$.

\paragraph*{$1^{\text{st}}$ case: $\theta_{M}/\pi\protect\notin\mathbb{Q}$}
Since we are assuming that $\lambda_{M}<0$ (so that $\theta_{M}\in(0,\frac{\pi}{2})$),
we may write the sequence (\ref{definition of sequnece Gp}) as follows
\[
\left(\mathcal{G}_{p}\left(\theta_{M}\right)\right)_{p\in\mathbb{N}}=\cos(2p\theta_{M})\,\cos\left(\frac{\pi\alpha}{8}\right)\cosh\left(\frac{\pi|\lambda_{M}|}{2}\right)-\sin(2p\theta_{M})\,\sin\left(\frac{\pi\alpha}{8}\right)\sinh\left(\frac{\pi|\lambda_{M}|}{2}\right).
\]
We will study the sign changes of the previous sequence through the
study of the function
\[
G(\phi):=\cos\left(\frac{\pi\alpha}{8}\right)\cosh\left(\frac{\pi|\lambda_{M}|}{2}\right)\,\cos(\phi)-\sin\left(\frac{\pi\alpha}{8}\right)\sinh\left(\frac{\pi|\lambda_{M}|}{2}\right)\,\sin(\phi).
\]
In order to study $G(\phi)$, let us assume that $8k<\alpha<8k+4$,
for some $k\in\mathbb{N}_{0}$. The remaining cases $8k+4<\alpha<8k+8$ and $\alpha\equiv0\mod4$ are analogous and will be sketched at the
end of the argument.  Under the assumption $8k<\alpha<8k+4$, we
know that $\cos\left(\frac{\pi\alpha}{8}\right)$ and $\sin\left(\frac{\pi\alpha}{8}\right)$
must have the same sign and, without loss of generality, we assume
it to be positive.

Let us define the sets
\begin{equation}
\mathcal{A}^{+}:=\left(\arctan\left(\cot\left(\frac{\pi\alpha}{8}\right)\coth\left(\frac{\pi|\lambda_{M}|}{2}\right)\right)-\pi,\arctan\left(\cot\left(\frac{\pi\alpha}{8}\right)\coth\left(\frac{\pi|\lambda_{M}|}{2}\right)\right)\right)
\end{equation}
and
\begin{equation}
\mathcal{A}^{-}:=\left(\arctan\left(\cot\left(\frac{\pi\alpha}{8}\right)\coth\left(\frac{\pi|\lambda_{M}|}{2}\right)\right),\arctan\left(\cot\left(\frac{\pi\alpha}{8}\right)\coth\left(\frac{\pi|\lambda_{M}|}{2}\right)\right)+\pi\right).
\end{equation}
Clearly, if $\phi\in\mathcal{A}^{+}$ then $G(\phi)>0$, while $G(\phi)<0$
when $\phi\in\mathcal{A}^{-}$.We now consider the subintervals of
$\mathcal{A}^{+}$ and $\mathcal{A}^{-}$ defined by 
\begin{equation}
\mathcal{I}^{+}:=\left(\arctan\left(\cot\left(\frac{\pi\alpha}{8}\right)\coth\left(\frac{\pi|\lambda_{M}|}{2}\right)\right)-\frac{\pi}{2}-\theta_{M};\,\arctan\left(\cot\left(\frac{\pi\alpha}{8}\right)\coth\left(\frac{\pi|\lambda_{M}|}{2}\right)\right)-\frac{\pi}{2}+\theta_{M}\right),\label{I+}
\end{equation}
\begin{equation}
\mathcal{I}^{-}:=\left(\arctan\left(\cot\left(\frac{\pi\alpha}{8}\right)\coth\left(\frac{\pi|\lambda_{M}|}{2}\right)\right)+\frac{\pi}{2}-\theta_{M};\,\arctan\left(\cot\left(\frac{\pi\alpha}{8}\right)\coth\left(\frac{\pi|\lambda_{M}|}{2}\right)\right)+\frac{\pi}{2}+\theta_{M}\right).\label{I-}
\end{equation}
Then $\mathcal{I}^{+}$ (resp. $\mathcal{I}^{-}$) is an arc in the
center of $\mathcal{A}^{+}$ (resp. $\mathcal{A}^{-}$) with length
$2\theta_{M}$. We can therefore write the partition
\begin{equation}
\mathcal{A}^{+}=\mathcal{A}_{1}^{+}\cup\mathcal{I}^{+}\cup\mathcal{A}_{2}^{+},\label{great partition}
\end{equation}
where
\[
\mathcal{A}_{1}^{+}=\left(\arctan\left(\cot\left(\frac{\pi\alpha}{8}\right)\coth\left(\frac{\pi|\lambda_{M}|}{2}\right)\right)-\pi,\arctan\left(\cot\left(\frac{\pi\alpha}{8}\right)\coth\left(\frac{\pi|\lambda_{M}|}{2}\right)\right)-\frac{\pi}{2}-\theta_{M}\right),
\]
while
\[
\mathcal{A}_{2}^{+}=\left(\arctan\left(\cot\left(\frac{\pi\alpha}{8}\right)\coth\left(\frac{\pi|\lambda_{M}|}{2}\right)\right)-\frac{\pi}{2}+\theta_{M},\arctan\left(\cot\left(\frac{\pi\alpha}{8}\right)\coth\left(\frac{\pi|\lambda_{M}|}{2}\right)\right)\right).
\]
Analogously, we can find the partition of $\mathcal{A}^{-}$,
\[
\mathcal{A}^{-}=\mathcal{A}_{1}^{-}\cup\mathcal{I}^{-}\cup\mathcal{A}_{2}^{-},
\]
where
\[
\mathcal{A}_{1}^{-}=\left(\arctan\left(\cot\left(\frac{\pi\alpha}{8}\right)\coth\left(\frac{\pi|\lambda_{M}|}{2}\right)\right),\arctan\left(\cot\left(\frac{\pi\alpha}{8}\right)\coth\left(\frac{\pi|\lambda_{M}|}{2}\right)\right)+\frac{\pi}{2}-\theta_{M}\right)
\]
and
\[
\mathcal{A}_{2}^{-}=\left(\arctan\left(\cot\left(\frac{\pi\alpha}{8}\right)\coth\left(\frac{\pi|\lambda_{M}|}{2}\right)\right)+\frac{\pi}{2}+\theta_{M},\arctan\left(\cot\left(\frac{\pi\alpha}{8}\right)\coth\left(\frac{\pi|\lambda_{M}|}{2}\right)\right)+\pi\right).
\]
\begin{figure}
\begin{center}
\includegraphics{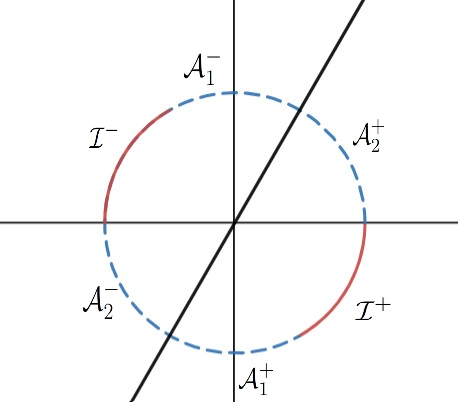}\caption{The partition of the unit circle into $\mathcal{A}^{+}$ and $\mathcal{A}^{-}$
and the representations of the subsets $\mathcal{A}_{1}^{\pm}$, $\mathcal{A}_{2}^{\pm}$
and $\mathcal{I}^{\pm}$. The line dividing the unit circle is defined
by the equation $Y=\arctan\left(\cot\left(\frac{\pi\alpha}{8}\right)\coth\left(\frac{\pi|\lambda_{M}|}{2}\right)\right)\,X$.}
\end{center}
\end{figure}
\\

Since $\theta_{M}/\pi\notin\mathbb{Q}$, by Kronecker's lemma the
set of points $\left\{ n\theta_{M}/\pi\right\} _{n\in\mathbb{N}}$
is dense in $(0,1)$. Thus, there exist two infinite sequences $\left(q_{n}^{+}\right)_{n\in\mathbb{N}}$
and $\left(q_{n}^{-}\right)_{n\in\mathbb{N}}$ such that
\begin{equation}
\left(q_{n}^{+}\right)_{n\in\mathbb{N}}:=\left\{ q\in\mathbb{N}\,:\,2q\theta_{M}\in\mathcal{I}^{+}\right\} ,\,\,\,\,\left(q_{n}^{-}\right)_{n\in\mathbb{N}}:=\left\{ q\in\mathbb{N}\,:\,2q\theta_{M}\in\mathcal{I}^{-}\right\} .\label{construction of sequences important for the proof}
\end{equation}
The main claim of this section establishes that $\left(q_{n}^{+}\right)_{n\in\mathbb{N}}$
and $\left(q_{n}^{-}\right)_{n\in\mathbb{N}}$ interlace.

\begin{claim}\label{claim 3.1}
For every $n\in\mathbb{N}$, we have that
\[
q_{n}^{+}<q_{n}^{-}<q_{n+1}^{+}<q_{n+1}^{-}<q_{n+2}^{+}<\ldots.
\]
\end{claim}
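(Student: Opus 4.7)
My plan is to lift the orbit on the unit circle to its universal cover $\mathbb{R}$ and exploit a sharp geometric match between the orbit's step size and the common length of the target arcs. Writing
\[
c := \arctan\!\bigl(\cot(\pi\alpha/8)\coth(\pi|\lambda_M|/2)\bigr) - \tfrac{\pi}{2},
\]
the arcs $\mathcal{I}^+$ and $\mathcal{I}^-$ are open intervals of length $2\theta_M$ centered at $c$ and $c+\pi$ modulo $2\pi$, respectively. Their union lifts to the disjoint family
\[
I_k := \bigl(c + k\pi - \theta_M,\; c + k\pi + \theta_M\bigr) \subset \mathbb{R}, \qquad k \in \mathbb{Z},
\]
where $I_k$ is a copy of $\mathcal{I}^+$ when $k$ is even and a copy of $\mathcal{I}^-$ when $k$ is odd. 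Correspondingly, the orbit $\phi_q = 2q\theta_M \bmod 2\pi$ lifts to the strictly increasing arithmetic progression $\tilde\phi_q = 2q\theta_M$ on $\mathbb{R}$, so that $q \in (q_n^+)_n \cup (q_n^-)_n$ precisely when $\tilde\phi_q$ lies in some $I_k$, with the $\pm$ label determined by the parity of that $k$.

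The heart of the argument is the following geometric observation: a single step of the orbit cannot jump across an entire $I_{k+1}$. Indeed, were $\tilde\phi_q$ strictly to the left of $I_{k+1}$ and $\tilde\phi_{q+1}$ strictly to its right, we would have $\tilde\phi_{q+1} - \tilde\phi_q > |I_{k+1}| = 2\theta_M$, which contradicts the constant step $\tilde\phi_{q+1} - \tilde\phi_q = 2\theta_M$. Combined with the monotonicity of $q \mapsto \tilde\phi_q$, this forces $k_1 = k_0 + 1$ whenever $\tilde\phi_q \in I_{k_0}$ and $\tilde\phi_{q'} \in I_{k_1}$ are consecutive orbit visits to $\bigcup_k I_k$. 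Consequently, the parity of $k$ flips at each successive visit, so the merged sequence $(q_n^+)_n \cup (q_n^-)_n$, enumerated in increasing order, alternates strictly between $\mathcal{I}^+$ and $\mathcal{I}^-$.

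This alternation is exactly the claimed interlacing once one fixes that the very first visit lies in $\mathcal{I}^+$, a fact reflected in the labeling of the sequences; under the standing hypothesis $\lambda_M<0$ (so $\theta_M \in (0,\pi/2)$) and $8k<\alpha<8k+4$ (so $\cos(\pi\alpha/8), \sin(\pi\alpha/8) > 0$, and hence $c \in (-\pi/2, 0)$), it can be verified directly from the initial orbit data. Boundary hits $\tilde\phi_q \in \partial I_k$ are excluded automatically by the irrationality $\theta_M/\pi \notin \mathbb{Q}$, so the orbit never meets the border between $\mathcal{I}^\pm$ and a gap. I do not anticipate any real obstacle beyond this elementary size comparison; the whole argument rests on the fact that the step $2\theta_M$ is simultaneously the common length of each $I_k$ and strictly less than the spacing $\pi$ between consecutive centers.
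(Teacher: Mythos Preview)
Your argument is correct and rests on the same key observation as the paper's proof: the orbit step $2\theta_M$ equals the common length of $\mathcal{I}^+$ and $\mathcal{I}^-$, so no single step can overshoot one of these arcs. The paper argues this directly on the circle by tracking which of the arcs $\mathcal{A}_2^+,\mathcal{A}_1^-,\mathcal{I}^-,\ldots$ the point $2(a+j)\theta_M$ falls into as $j$ increases from a starting $a\in(q_n^+)$; your lifting to $\mathbb{R}$ replaces this case analysis by a clean monotone arithmetic progression through the periodic family $I_k$, which is arguably tidier and makes the ``cannot jump over $I_{k+1}$'' step transparent.

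One small caveat: your remark that irrationality of $\theta_M/\pi$ automatically excludes boundary hits $\tilde\phi_q\in\partial I_k$ is not quite right, since the endpoints $c+k\pi\pm\theta_M$ involve the transcendental constant $c$ and not just rational multiples of $\pi$; irrationality of $\theta_M/\pi$ alone does not preclude $(2q\pm1)\theta_M\equiv c\pmod{\pi}$ for some $q$. The paper's proof glosses over the same edge case. In practice this does not matter: the boundaries of $\mathcal{I}^\pm$ lie strictly inside $\mathcal{A}^\pm$, so $G$ is nonzero there, and one could simply work with the closed arcs $\overline{\mathcal{I}^\pm}$ (or shrink them slightly) without affecting the downstream use of the sequence $(q_n)$. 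Also, neither your proof nor the paper's pins down that the \emph{first} visit is to $\mathcal{I}^+$ rather than $\mathcal{I}^-$; both establish strict alternation, which is all that the subsequent argument actually requires.
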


\begin{proof}
First, let us observe that, if $a\in\left(q_{n}^{+}\right)_{n\in\mathbb{N}}$
then $a+1\notin\left(q_{n}^{+}\right)_{n\in\mathbb{N}}$, because
$2(a+1)\theta_{M}=2a\theta_{M}+2\theta_{M}$ and the length of $\mathcal{I}^{+}$
is, by construction, equal to $2\theta_{M}$. Therefore, either $2(a+1)\theta_{M}\in\mathcal{I}^{-}$
(i.e., $a+1\in\left(q_{n}^{-}\right)_{n\in\mathbb{N}}$) or $2(a+1)\theta_{M}\in\mathcal{A}_{2}^{+}\cup\mathcal{A}_{1}^{-}$.
This actually happens because $2\theta_{M}<\pi$ and, since $2a\theta_{M}\in\mathcal{I}^{+}$
by hypothesis, then
\[
2a\theta_{M}+2\theta_{M}<\arctan\left(\cot\left(\frac{\pi\alpha}{8}\right)\coth\left(\frac{\pi|\lambda_{M}|}{2}\right)\right)+\frac{\pi}{2}+\theta_{M},
\]
which assures that $2(a+1)\theta_{M}\notin\mathcal{A}_{2}^{-}$. Since
$a\in(q_{n}^{+})_{n\in\mathbb{N}}$ then $a=q_{m_{0}}^{+}$ for some
$m_{0}$. If $a+1\in(q_{n}^{-})_{n\in\mathbb{N}}$, then we have that
$q_{m_{0}}^{+}<q_{m_{0}}^{-}$. On the other hand, if $2(a+1)\theta_{M}\in\mathcal{A}_{2}^{+}\cup\mathcal{A}_{1}^{-}$
then there is some $j\geq2$ such that $2(a+j)\theta_{M}\in\mathcal{I}^{-}$
because the length of $\mathcal{I}^{-}$ is equal to $2\theta_{M}$.
Therefore, the translation $a+j$, $j\in\mathbb{N}_{0}$, cannot go
from $q_{m_{0}}^{+}$ to $q_{m_{0}+1}^{+}$ without passing first
by an element of the sequence $\left(q_{n}^{-}\right)_{n\in\mathbb{N}}$.
This completes the proof of the claim.
\end{proof}

\bigskip{}

The interlacing property of $q_{n}^{+}$ and $q_{n}^{-}$ shows that
we can construct a new sequence
\[
\left(q_{n}\right)_{n\in\mathbb{N}}:=\left(q_{n}^{+}\right)_{n\in\mathbb{N}}\cup\left(q_{n}^{-}\right)_{n\in\mathbb{N}}
\]
in such a way that $q_{2n-1}:=q_{n}^{+}$ and $q_{2n}:=q_{n}^{-}$.
Moreover, we have that $\mathcal{G}_{q_{m}}\left(\theta_{M}\right)$
and $\mathcal{G}_{q_{m+1}}\left(\theta_{M}\right)$ have opposite
signs. 

Finally, we remark that the cases $8k+4<\alpha<8k+8$ and $\alpha\equiv0\mod4$
are analogous. In the first case, $\cos\left(\frac{\pi\alpha}{8}\right)$
and $\sin\left(\frac{\pi\alpha}{8}\right)$ have opposite signs. However,
a similar strategy as the one given above works and the only modification
is replacing $\mathcal{A}_{1}^{+},\mathcal{I}^{+}\text{ and }\mathcal{A}_{2}^{+}$
respectively by $\mathcal{A}_{1}^{-},\mathcal{I}^{-}\text{ and }\mathcal{A}_{2}^{-}$.

On the other hand, if $\alpha=8k$ or $\alpha=8k+4$ for some $k\in\mathbb{N}_{0}$,
then
\begin{equation}
\left(\mathcal{G}_{p}(\theta_{M})\right)_{p\in\mathbb{N}}=(-1)^{k}\,\cosh\left(\frac{\pi|\lambda_{M}|}{2}\right)\,\cos\left(2p\theta_{M}\right),\,\,\,\text{if \,}\alpha=8k\label{first case}
\end{equation}
and
\begin{equation}
\left(\mathcal{G}_{p}(\theta_{M})\right)_{p\in\mathbb{N}}=(-1)^{k-1}\,\sinh\left(\frac{\pi|\lambda_{M}|}{2}\right)\,\sin\left(2p\theta_{M}\right),\,\,\,\text{if }\,\alpha=8k+4.\label{second case}
\end{equation}
In any of these cases a similar construction of the sets $\mathcal{A}_{1}^{\pm},$
$\mathcal{A}_{2}^{\pm}$ and $\mathcal{I}^{\pm}$ can be made. 

\bigskip{}

\paragraph*{$2^{\text{nd}}$ case: $\theta_{M}/\pi\in\mathbb{Q}$} The conclusions given above in the case $\theta_{M}/\pi\in\mathbb{Q}$
are analogous but instead of arguing via Kronecker's lemma, the sequences
(\ref{construction of sequences important for the proof}) can be
explicitly constructed in this case. Since $|\mathcal{I}^{+}|=|\mathcal{I}^{-}|=2\theta_{M}$,
we know that there exist $q_{1}$ and $q_{2}>q_{1}$ such that $2q_{1}\theta_{M}\in\mathcal{I}^{+}$
and $2q_{2}\theta_{M}\in\mathcal{I}^{-}$. Since $\theta_{M}=\frac{a}{b}\pi$,
$\left(a,b\right)=1$, then $2q_{1}\theta_{M}\equiv2\left(q_{1}+nb\right)\theta_{M}\mod2\pi$
and $2q_{2}\theta_{M}\equiv2\left(q_{2}+nb\right)\theta_{M}\mod2\pi$.
Hence, if we construct the sequence $\left(q_{n}\right)_{n\in\mathbb{N}}$
in such a way that\footnote{Note that the interlacing property established by Claim \ref{claim 3.1} is obviously valid in this case.}
\begin{equation}
q_{2n-1}:=q_{1}+nb,\,\,\,\,q_{2n}:=q_{2}+nb,\label{construction of the sequence rational case}
\end{equation}
then $\mathcal{G}_{q_{n}}(\theta_{M})\cdot\mathcal{G}_{q_{n+1}}(\theta_{M})<0,$
i.e., $\mathcal{G}_{q_{n}}\left(\theta_{M}\right)$ and $\mathcal{G}_{q_{n+1}}\left(\theta_{M}\right)$
have opposite signs. 

\bigskip{}
\bigskip{}

\subsection{The dominance of $\left(\mathcal{G}_{p}(\theta_{M})\right)_{p\in\mathbb{N}}$.}

Having constructed a sequence $\left(q_{n}\right)_{n\in\mathbb{N}}$
for which $\left(\mathcal{G}_{q_{n}}(\theta_{M})\right)_{n\in\mathbb{N}}$
is always alternating its sign, we are now ready to proceed with the
proof. Our next claim establishes that the sequence $\mathcal{G}_{q_{n}}(\theta_{M})$
dominates, in some sense, the second term (involving $\mathcal{A}_{\alpha,p}(u,z)$)
appearing in (\ref{integral representation with Gp simplified}).

\begin{claim}\label{claim 3.2}
There exists a sufficiently large $\eta_{0}$ such that, for any $z$
satisfying (\ref{condition 1/6}), $\mathcal{A}_{\alpha,p}(\cdot,z)$
satisfies 
\begin{equation}
e^{z^{2}/8}\,\left|\text{Re}\left[\mathcal{A}_{\alpha,p}\left(\frac{1}{\eta_{0}\,p\log(p)},z\right)\right]\right|<|c_{M}|\,r_{M}^{2p}\left(1+e^{z^{2}/8}\sinh\left(\frac{z^{2}}{8}\right)\right)\left|\mathcal{G}_{p}\left(\theta_{M}\right)\right|\label{bound for A alpha p with u depending on p}
\end{equation}
for any $p\in\left(q_{n}\right)_{n\in\mathbb{N}}$.
\end{claim}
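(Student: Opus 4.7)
The plan is to prove the claim by a direct comparison of both sides of (\ref{bound for A alpha p with u depending on p}), using the explicit estimate (\ref{bound for Aalpha,p}) for the upper bound on $|\mathcal{A}_{\alpha,p}(u,z)|$ and the geometric construction of $(q_{n})_{n\in\mathbb{N}}$ from Section 3.3 to secure a positive lower bound on $|\mathcal{G}_{q_{n}}(\theta_{M})|$ that is uniform in $n$.

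First, I substitute $u=1/(\eta_{0}\,p\log p)$ into (\ref{bound for Aalpha,p}), so that $e^{-A/u}=p^{-A\eta_{0}p}$, and apply Stirling's formula to $(2p)!$. This yields
\[
|\mathcal{A}_{\alpha,p}(u,z)|\leq\mathscr{D}\,2^{14p}\,(2p)!\,(\eta_{0}\,p\log p)^{\alpha/2+2p}\,p^{-A\eta_{0}p},
\]
the logarithm of which behaves asymptotically as $(4-A\eta_{0})\,p\log p+2p\log\log p+O(p)$ as $p\to\infty$.

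Next, by the construction in Section 3.3, for each $n\in\mathbb{N}$ the angle $2q_{n}\theta_{M}$ belongs to one of the arcs $\mathcal{I}^{\pm}$ of length $2\theta_{M}$ centered at the extrema of the sinusoid $G(\phi):=\mathcal{A}\cos\phi-\mathcal{B}\sin\phi$ with $\mathcal{A}=\cos(\pi\alpha/8)\cosh(\pi|\lambda_{M}|/2)$ and $\mathcal{B}=\sin(\pi\alpha/8)\sinh(\pi|\lambda_{M}|/2)$. Writing $G(\phi)=\sqrt{\mathcal{A}^{2}+\mathcal{B}^{2}}\,\cos(\phi-\phi_{0})$ and noting that the extrema lie at the centers of $\mathcal{I}^{\pm}$, I deduce that on $\mathcal{I}^{+}\cup\mathcal{I}^{-}$ we have $|G(\phi)|\geq\sqrt{\mathcal{A}^{2}+\mathcal{B}^{2}}\,\cos(\theta_{M})$, and therefore
\[
|\mathcal{G}_{q_{n}}(\theta_{M})|\geq g_{0}:=\sqrt{\mathcal{A}^{2}+\mathcal{B}^{2}}\,\cos(\theta_{M})>0
\]
uniformly in $n$, with the obvious adaptation in the degenerate cases $\alpha\equiv 0\pmod{4}$ (where $G$ reduces to a single cosine or sine term and the arcs are chosen around its extrema in the same way). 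Observe also that $1+e^{z^{2}/8}\sinh(z^{2}/8)=(1+e^{z^{2}/4})/2\geq 1$ for real $z$.

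Combining these ingredients, the logarithm of the ratio
\[
\frac{e^{z^{2}/8}\,|\operatorname{Re}\mathcal{A}_{\alpha,p}(u,z)|}{|c_{M}|\,r_{M}^{2p}\,(1+e^{z^{2}/8}\sinh(z^{2}/8))\,|\mathcal{G}_{p}(\theta_{M})|}
\]
is bounded above for $p\in(q_{n})$ by $(4-A\eta_{0})\,p\log p+O(p\log\log p)$. Picking $\eta_{0}$ with $A\eta_{0}>4$ forces this quantity to $-\infty$, so the inequality (\ref{bound for A alpha p with u depending on p}) holds for all sufficiently large $p\in(q_{n})$; enlarging $\eta_{0}$ further handles the finitely many small elements of the sequence, because then $u$ is tiny and the factor $e^{-A/u}=p^{-A\eta_{0}p}$ overwhelms every polynomial term in the upper bound. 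The only delicate ingredient in the whole argument is the uniform lower bound $g_{0}>0$ for $|\mathcal{G}_{q_{n}}(\theta_{M})|$; this rests crucially on the fact that $(q_{n})$ was constructed so as to keep $2q_{n}\theta_{M}$ inside the smaller arcs $\mathcal{I}^{\pm}$ and not merely inside $\mathcal{A}^{\pm}$, so that the relevant angular distance from the extrema of $G$ is at most $\theta_{M}<\pi/2$.
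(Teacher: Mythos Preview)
Your argument is correct and follows essentially the same route as the paper's own proof: both substitute $u=1/(\eta_{0}p\log p)$ into the bound (\ref{bound for Aalpha,p}), invoke Stirling's formula, use the construction of $(q_{n})$ to get a uniform positive lower bound on $|\mathcal{G}_{q_{n}}(\theta_{M})|$, and then choose $\eta_{0}$ large enough so that the exponential factor $e^{-A/u}=p^{-A\eta_{0}p}$ dominates. Your version is in fact slightly sharper in one respect: you compute the lower bound explicitly as $g_{0}=\sqrt{\mathcal{A}^{2}+\mathcal{B}^{2}}\cos(\theta_{M})$ by observing that the centers of $\mathcal{I}^{\pm}$ are exactly the extrema of $G$, whereas the paper is content with an unspecified $\epsilon_{0}>0$.
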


\begin{proof}
By (\ref{bound for Aalpha,p}), we have that
\[
|\mathcal{A}_{\alpha,p}(u,z)|\leq\mathscr{D}\frac{2^{14p}(2p)!e^{-\frac{A}{u}}}{u^{\frac{\alpha}{2}+2p}},\,\,\,\,0<u<\frac{\pi}{4},
\]
so that the inequality (\ref{bound for A alpha p with u depending on p})
holds for any $p\in\left(q_{n}\right)_{n\in\mathbb{N}}$ if   
\begin{equation}
\frac{\mathscr{D}2^{14p}(2p)!\,e^{z^{2}/8}}{|c_{M}|r_{M}^{2p}\,u^{\frac{\alpha}{2}+2p}\left(1+e^{z^{2}/8}\sinh\left(\frac{z^{2}}{8}\right)\right)\left|\mathcal{G}_{p}\left(\theta_{M}\right)\right|}<e^{A/u}\label{condition to hold}
\end{equation}
holds for any $p\in(q_{n})_{n\in\mathbb{N}}$. By construction of
the sets $\mathcal{I}^{+}$ and $\mathcal{I}^{-}$ (see (\ref{I+}) and
(\ref{I-}) above), if $p\in(q_{n})_{n\in\mathbb{N}}$ then
\begin{equation}
\left|\mathcal{G}_{p}\left(\theta_{M}\right)\right|=\left|\cos\left(2p\theta_{M}\right)\,\cos\left(\frac{\pi\alpha}{8}\right)\cosh\left(\frac{\pi}{2}\lambda_{M}\right)+\sin\left(2p\theta_{M}\right)\,\sin\left(\frac{\pi\alpha}{8}\right)\sinh\left(\frac{\pi}{2}\lambda_{M}\right)\right|\geq\epsilon_{0},\label{lower bound for terrrrms all the terrrms}
\end{equation}
for some $\epsilon_{0}>0$ (only depending on $\theta_{M}$). Therefore,
since $e^{z^{2}/8}<e^{\frac{\pi\alpha}{576}}$ (by our condition
(\ref{condition first rectangle})), (\ref{condition to hold}) is plainly
satisfied if, for any $p\in(q_{n})_{n\in\mathbb{N}}$, 
\begin{equation}
\frac{\mathscr{D}\exp\left(\frac{\pi\alpha}{576}\right)}{\epsilon_{0}|c_{M}|}\,\frac{2^{14p}(2p)!}{r_{M}^{2p}u^{\frac{\alpha}{2}+2p}}<e^{A/u}.\label{both sides of sufficient condition already to take the logarithm}
\end{equation}
Letting $u=\frac{1}{\eta_{0}\,p\log(p)}$ and taking the logarithm
on both sides of (\ref{both sides of sufficient condition already to take the logarithm}),
we see that (\ref{both sides of sufficient condition already to take the logarithm})
holds if 
\begin{equation}
\frac{1}{Ap\log(p)}\log\left(\frac{\mathscr{D}\exp\left(\frac{\pi\alpha}{576}\right)}{\epsilon_{0}|c_{M}|}\right)+\frac{2\,\log\left(128/r_{M}\right)}{A\log(p)}+\frac{\log(2p)!}{Ap\log(p)}+\frac{1}{A\log(p)}\left(\log(\eta)+\log(p)+\log(\log(p))\right)\left(2+\frac{\alpha}{2p}\right)<\eta_{0}.\label{Final sufficient condition with all the logs}
\end{equation}
From Stirling's formula, we know that $\log(2p)!=O\left(p\log(p)\right)$,
from which it follows that (\ref{Final sufficient condition with all the logs})
holds for every $p\in(q_{n})_{n\in\mathbb{N}}$ if $\eta_{0}$ is
a sufficiently large number. This completes the proof of (\ref{bound for A alpha p with u depending on p}).
\end{proof}

\bigskip{}

We have proved that the second term of (\ref{integral representation with Gp simplified})
is dominated by the term involving $\mathcal{G}_{p}\left(\theta_{M}\right)$
once $p$ belongs to a suitable sequence of integers. Similar to what
we have done in {[}\cite{RYCE}, p. 47{]}, a bound for the functions
$\tilde{E}(X,z)$ and $\tilde{H}(X,z)$ also holds.

\begin{claim}\label{claim 3.3}
There exists a sufficiently large $N_{0}$ such that, for any $p\in\left(q_{n}\right)_{n\geq N_{0}}$
and $z$ satisfying (\ref{condition first rectangle}), the following
inequalities hold
\begin{equation}
|\tilde{E}(X,z)|<\frac{1}{6},\,\,\,|\tilde{H}\left(X,z\right)|<\frac{1}{6}\label{bound E and H}
\end{equation}
and, for any $\eta>0$,
\begin{equation}
\frac{\left|\mathcal{B}_{\alpha,p}\left(\frac{1}{\eta p\log(p)}\right)\right|}{2\,|c_{M}|r_{M}^{2p}\left|\mathcal{G}_{p}\left(\theta_{M}\right)\right|}<\frac{1}{6},\label{Bound Balpha p}
\end{equation}
with $\mathcal{B}_{\alpha,p}(u)$ is defined by (\ref{series on (111)}).
\end{claim}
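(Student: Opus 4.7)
The three estimates in \eqref{bound E and H}--\eqref{Bound Balpha p} share the common denominator $2|c_M|r_M^{2p}|\mathcal{G}_p(\theta_M)|$, which by \eqref{lower bound for terrrrms all the terrrms} is uniformly bounded below by $2|c_M|r_M^{2p}\epsilon_0$ whenever $p\in(q_n)_{n\in\mathbb{N}}$. Together with the polar-coordinate identity $r_j^2=\alpha^2/4+4\lambda_j^2$ and the hypothesis $|\lambda_j|\le|\lambda_M|$, this gives the uniform bound $r_j/r_M\le 1$ for all $j$. I will use these two ingredients in each case.

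For $\tilde{H}(X,z)$: in the numerator of each summand the trigonometric/hyperbolic combination is bounded in absolute value by $\cosh(\pi|\lambda_M|/2)$, while $(r_j/r_M)^{2p}\le 1$. Hence
\[
|\tilde{H}(X,z)|\;\le\;\frac{\cosh(\pi|\lambda_M|/2)}{\epsilon_0\,|c_M|}\sum_{\substack{j>X\\ j\ne M}}|c_j|,
\]
and, since $\sum_j|c_j|<\infty$, we can select $X=X_0$ large enough that the right-hand side is below $1/6$. With $X_0$ now fixed, for the finite sum defining $\tilde{E}(X_0,z)$ the decisive fact is that $r_j<r_M$ strictly for each $j\le X_0$ with $j\ne M$ (using $\lambda_j\ne\lambda_M$ and the maximality of $|\lambda_M|$; the borderline case $|\lambda_j|=|\lambda_M|$, which forces $\lambda_j=-\lambda_M$ and hence $\theta_j=\pi-\theta_M$, is handled by noting that along $(q_n)$ the ratio $\mathcal{G}_p(\theta_j)/\mathcal{G}_p(\theta_M)$ stays bounded, because $2q_n\theta_M$ lies in the arcs $\mathcal{I}^\pm$ while $2q_n\theta_j\equiv-2q_n\theta_M\pmod{2\pi}$ lies in their reflections). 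Therefore each term decays geometrically in $p$, and choosing $N_0$ so that $q_{N_0}$ is large enough makes $|\tilde{E}(X_0,z)|<1/6$ for all $p\in(q_n)_{n\ge N_0}$.

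For the third estimate, I will plug $u=1/(\eta p\log p)$ into the bound \eqref{Balpha p estimate as u tends to zero}, which yields
\[
\frac{|\mathcal{B}_{\alpha,p}(1/(\eta p\log p))|}{2|c_M|r_M^{2p}|\mathcal{G}_p(\theta_M)|}\;\le\;\frac{C_\alpha}{2|c_M|\epsilon_0\,\eta\,p\log p}.
\]
This tends to $0$ as $p\to\infty$, so enlarging $N_0$ if necessary makes it smaller than $1/6$ along $(q_n)_{n\ge N_0}$ for the prescribed $\eta$. The main obstacle is the subtle case $|\lambda_j|=|\lambda_M|$ with $j\ne M$ in the treatment of $\tilde{E}$: here the geometric decay $(r_j/r_M)^{2p}$ is no longer available, and one must exploit the special structure of the sequence $(q_n)$ constructed in the previous subsection, namely that $2q_n\theta_M\in\mathcal{I}^+\cup\mathcal{I}^-$, to keep $|\mathcal{G}_p(\theta_j)/\mathcal{G}_p(\theta_M)|$ bounded away from infinity (in fact below a small constant) uniformly in $p\in(q_n)$.
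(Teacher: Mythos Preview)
Your argument is essentially the paper's own: for $\tilde{H}$ you use $|\mathcal{G}_p(\theta_M)|\ge\epsilon_0$ together with the tail of $\sum|c_j|$; for $\tilde{E}$ you use the strict inequality $r_j<r_M$ for $j\le X_0$, $j\ne M$, to get geometric decay $(r_j/r_M)^{2p}\to0$; and for $\mathcal{B}_{\alpha,p}$ you substitute $u=1/(\eta p\log p)$ into \eqref{Balpha p estimate as u tends to zero}. This matches the paper's proof line by line.

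The extra discussion you introduce about a possible borderline case $|\lambda_j|=|\lambda_M|$ with $j\ne M$ is unnecessary here: the paper has already recorded, just before \S\ref{suitable integral section}, that $r_j<r_M$ strictly for every $j\ne M$, so the quantity $\mu_X=\max_{j\le X,\,j\ne M}r_j/r_M$ is genuinely $<1$ and geometric decay suffices. Moreover, your proposed fix for that hypothetical case would not work as stated: if $\lambda_j=-\lambda_M$ (hence $\theta_j=\pi-\theta_M$), a direct computation using $\cos(2p(\pi-\theta_M))=\cos(2p\theta_M)$, $\sin(2p(\pi-\theta_M))=-\sin(2p\theta_M)$, $\cosh(-x)=\cosh x$, $\sinh(-x)=-\sinh x$ gives $\mathcal{G}_p(\theta_j)=\mathcal{G}_p(\theta_M)$ exactly, so the ratio is $1$ and the term $c_j/c_M$ does not decay with $p$. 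Fortunately, under the paper's standing hypothesis this case does not arise, so your core argument (which coincides with the paper's) goes through.
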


\begin{proof}
Since $p\in(q_{n})_{n\geq N_{0}}$, then (\ref{lower bound for terrrrms all the terrrms})
holds for some $\epsilon_{0}\geq0$. Thus, since $(c_{j})_{j\in\mathbb{N}}\in\ell^{1}$, we can choose a sufficiently large $X\geq X_{0}$ such that
\begin{align*}
|\tilde{H}(X,z)| & \leq\frac{1}{\epsilon_{0}}\sum_{j\neq M,\,j>X}\frac{|c_{j}|}{|c_{M}|}\,\left(\frac{r_{j}}{r_{M}}\right)^{2p}\,\left|\cos\left(2p\theta_{j}\right)\,\cos\left(\frac{\pi\alpha}{8}\right)\cosh\left(\frac{\pi}{2}\lambda_{j}\right)+\sin\left(2p\theta_{j}\right)\,\sin\left(\frac{\pi\alpha}{8}\right)\sinh\left(\frac{\pi}{2}\lambda_{j}\right)\right|\\
 & \leq\frac{2e^{\frac{\pi}{2}\lambda_{M}}}{\epsilon_{0}|c_{M}|}\,\sum_{j\neq M,j>X}\,|c_{j}|<\frac{1}{6}.
\end{align*}
On the other hand, we may bound $\tilde{E}(X,z)$ as follows
\[
\left|\tilde{E}(X,z)\right|\leq\frac{2e^{\frac{\pi}{2}\lambda_{M}}}{\epsilon_{0}|c_{M}|}\mu_{X}^{2p}\sum_{j\neq M,\,j\leq X}|c_{j}|,
\]
where 
\[
\mu_{X}=\max_{j\leq X}\left\{ \frac{r_{j}}{r_{M}}\right\} .
\]
By property (\ref{lambda M condition attaining bounds}), we know
that $\mu_{X}<1$ and so, for $N_{0}$ sufficiently large and $p\in\left(q_{n}\right)_{n\geq N_{0}}$,
\[
|\tilde{E}(X,z)|<\frac{1}{6},
\]
which proves (\ref{bound E and H}). To get (\ref{Bound Balpha p}),
we just need to use (\ref{Balpha p estimate as u tends to zero})
with $u=\frac{1}{\eta p\,\log(p)}$,
\[
\frac{\left|\mathcal{B}_{\alpha,p}\left(\frac{1}{\eta p\log(p)}\right)\right|}{2\,|c_{M}|r_{M}^{2p}\left|\mathcal{G}_{p}\left(\theta_{M}\right)\right|}\leq\frac{C_{\alpha}}{2\epsilon_{0}|c_{M}|\eta p\,\log(p)}<\frac{1}{6},
\]
whenever $p\geq q_{N_{0}}$ and $N_{0}$ is sufficiently large. This
completes the proof of the claim.

\end{proof}

\subsection{Conclusion of the argument}
Finally, we return to the integral representation (\ref{integral representation with Gp simplified}),
\begin{align}
\intop_{0}^{\infty}t^{2p}\,\tilde{F}_{z,\alpha}\left(\frac{\alpha}{4}+it\right)\,\cosh\left(\left(\frac{\pi}{2}-2u\right)t\right)\,dt & =-\frac{8\pi c_{M}r_{M}^{2p}}{2^{2p}}\left(1+e^{z^{2}/8}\sinh\left(\frac{z^{2}}{8}\right)\right)\mathcal{G}_{p}\left(\theta_{M}\right) \nonumber \\
\times \left\{ 1+\frac{\mathcal{B}_{\alpha,p}(u)}{2c_{M}r_{M}^{2p}\,\mathcal{G}_{p}\left(\theta_{M}\right)}+\tilde{E}\left(X,z\right)+\tilde{H}\left(X,z\right)\right\} & +\frac{2\pi\,e^{z^{2}/8}}{2^{2p}}\,\text{Re}\left[\mathcal{A}_{\alpha,p}(u,z)\right],\label{integral representation after all the claims and steps}
\end{align}
and we take there $p\in\left(q_{n}\right)_{N_{0}\leq n\leq N}$, where
$N\geq2N_{0}+2$ and $N_{0}$ is a sufficiently large number for which
(\ref{bound E and H}) and (\ref{Bound Balpha p}) hold. Replacing
$u$ by $\frac{1}{\eta q_{N}\log(q_{N})}$ in (\ref{integral representation after all the claims and steps}),
it follows from our choices of $N_{0}$ and $X$ in the previous claim
that 
\begin{equation}
1+\frac{\mathcal{B}_{\alpha,p}(u)}{2c_{M}r_{M}^{2p}\,\mathcal{G}_{p}\left(\theta_{M}\right)}+\tilde{E}\left(X,z\right)+\tilde{H}\left(X,z\right)>\frac{1}{2},\label{first inequallliiity}
\end{equation}
by (\ref{bound E and H}) and (\ref{Bound Balpha p}). Moreover, according
to Claim \ref{claim 3.2}, namely (\ref{bound for A alpha p with u depending on p}),
we also have the inequality
\begin{equation}
\frac{2\pi\,e^{z^{2}/8}}{2^{2p}}\,\left|\text{Re}\left[\mathcal{A}_{\alpha,p}\left(\frac{1}{\eta\,p\log(p)},z\right)\right]\right|<\frac{1}{4}\cdot\frac{8\pi|c_{M}|\,r_{M}^{2p}}{2^{2p}}\left(1+e^{z^{2}/8}\sinh\left(\frac{z^{2}}{8}\right)\right)\left|\mathcal{G}_{p}\left(\theta_{M}\right)\right|,\label{second inequallllliiiiittttyyyy}
\end{equation}
for any $\eta\geq\eta_{0}$ and $\eta_{0}$ sufficiently large (such
that (\ref{Final sufficient condition with all the logs}) holds).
Combining (\ref{first inequallliiity}) with (\ref{second inequallllliiiiittttyyyy})
one concludes that, for any $p\in\left(q_{n}\right)_{N_{0}\leq n\leq N}$
and $\eta_{0}$ chosen as in the proof of Claim \ref{claim 3.2}, the sequence of
moments
\begin{equation}
\left(M_{p}\right)_{p\in(q_{n})_{N_{0}\leq n\leq N}}:=\intop_{0}^{\infty}t^{2p}\,\tilde{F}_{z,\alpha}\left(\frac{\alpha}{4}+it\right)\,\cosh\left(\left(\frac{\pi}{2}-\frac{2}{\eta_{0}q_{N}\log(q_{N})}\right)t\right)\,dt\label{Moments definition}
\end{equation}
must have the same sign as the sequence defined by
\begin{equation}
\left(s_{p}\right)_{p\in(q_{n})_{N_{0}\leq n\leq N}}:=-\frac{8\pi c_{M}r_{M}^{2p}}{2^{2p}}\left(1+e^{z^{2}/8}\sinh\left(\frac{z^{2}}{8}\right)\right)\mathcal{G}_{p}(\theta_{M}).\label{sp sequence defined here}
\end{equation}
Moreover, from (\ref{first inequallliiity}) and (\ref{second inequallllliiiiittttyyyy}),
the modulus of $M_{p}$, (\ref{Moments definition}), satisfies the
inequality
\begin{align}
\left|\intop_{0}^{\infty}t^{2p}\,\tilde{F}_{z,\alpha}\left(\frac{\alpha}{4}+it\right)\,\cosh\left(\left(\frac{\pi}{2}-\frac{2}{\eta_{0}q_{N}\log(q_{N})}\right)t\right)\,dt\right| & >\frac{1}{4}\cdot\frac{8\pi|c_{M}|r_{M}^{2p}}{2^{2p}}\left(1+e^{z^{2}/8}\sinh\left(\frac{z^{2}}{8}\right)\right)\left|\mathcal{G}_{p}(\theta_{M})\right|\nonumber \\
 & =\frac{2\pi|c_{M}|r_{M}^{2p}}{2^{2p}}\left(1+e^{z^{2}/8}\sinh\left(\frac{z^{2}}{8}\right)\right)\left|\mathcal{G}_{p}(\theta_{M})\right|.\label{inequality for the modulus}
\end{align}
In order to be able to finally apply Fej\'er's lemma, we establish the
following simple bound.

\begin{claim}
There exists $\eta_{1}>\eta_{0}$ such that, for any $\eta\geq\eta_{1}$ and 
$z$ satisfying (\ref{condition first rectangle}), the following inequality takes
place
\begin{equation}
\left|\,\intop_{\eta^{2}\,q_{N}^{2}\log^{2}(q_{N})}^{\infty}t^{2p}\,\tilde{F}_{z,\alpha}\left(\frac{\alpha}{4}+it\right)\,\cosh\left(\left(\frac{\pi}{2}-\frac{2}{\eta_{0}q_{N}\log(q_{N})}\right)t\right)\,dt\,\right|<\frac{2\pi|c_{M}|r_{M}^{2p}}{2^{2p}}\left(1+e^{z^{2}/8}\sinh\left(\frac{z^{2}}{8}\right)\right)\left|\mathcal{G}_{p}(\theta_{M})\right|,\label{inequality claim 4}
\end{equation}
whenever $p\in\left(q_{n}\right)_{N_{0}\leq n\leq N}$.
\end{claim}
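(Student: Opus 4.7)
The engine is the global bound (2.16), which gives
\[
\bigl|\tilde{F}_{z,\alpha}\bigl(\tfrac{\alpha}{4}+it\bigr)\bigr|\ll_{\alpha,z} |t|^{B(\alpha)}\, e^{-\frac{\pi}{2}|t|+|z|\sqrt{|t|}},\qquad |t|\to\infty,
\]
for a constant $B(\alpha)$. Writing $u=1/(\eta_{0}q_{N}\log q_{N})$ and using the elementary bound $\cosh\bigl((\tfrac{\pi}{2}-2u)t\bigr)\le e^{(\pi/2-2u)t}$ on $t>0$, the integrand on the left-hand side of (\ref{inequality claim 4}) is majorised, up to an absolute multiplicative constant, by the function
\[
\Phi_{p}(t):=t^{2p+B(\alpha)}\,e^{-2ut+|z|\sqrt{t}}.
\]

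The first step is to place the starting point $T_{0}:=\eta^{2}q_{N}^{2}\log^{2}(q_{N})$ firmly to the right of the maximum of $t\mapsto t^{2p+B(\alpha)}e^{-2ut}$, which sits at $t^{*}=(2p+B(\alpha))/(2u)=O(\eta_{0}q_{N}^{2}\log q_{N})$ since $p\le q_{N}$. Choosing $\eta$ with $\eta^{2}\gg \eta_{0}$ (absolute constant) ensures $T_{0}>2t^{*}$, so $\Phi_{p}$ is strictly decreasing on $[T_{0},\infty)$. Enlarging $\eta$ further (so that $\eta \gg |z|\eta_{0}$) gives $|z|\sqrt{t}\le \tfrac{1}{2}(2u)t$ throughout $t\ge T_{0}$, which permits absorbing $e^{|z|\sqrt{t}}$ into a factor of $e^{-ut}$. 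Under these two conditions, a standard tail estimate for an incomplete Gamma-type integral (integration by parts, or simply the fact that $\Phi_{p}$ dominates its derivative far past the peak) yields
\[
\int_{T_{0}}^{\infty}\Phi_{p}(t)\,dt \;\ll\; \frac{T_{0}^{\,2p+B(\alpha)}\,e^{-uT_{0}}}{u}.
\]

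It remains to show this majorant is smaller than the right-hand side of (\ref{inequality claim 4}), i.e.\ essentially $|c_{M}|(r_{M}/2)^{2p}|\mathcal{G}_{p}(\theta_{M})|$ (the $\sinh$ factor being nonnegative). Taking logarithms, the task reduces to the comparison
\[
(2p+B(\alpha))\log T_{0}-uT_{0}+\log(1/u)\;\le\;2p\log(r_{M}/2)+\log\bigl|\mathcal{G}_{p}(\theta_{M})\bigr|+O(1).
\]
The lower bound $|\mathcal{G}_{p}(\theta_{M})|\ge\epsilon_{0}>0$ from (\ref{lower bound for terrrrms all the terrrms}) makes the last term harmless. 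Since $p\le q_{N}$, the positive contribution on the left is $O(q_{N}\log^{2}q_{N})$, while $uT_{0}=\eta^{2}q_{N}\log(q_{N})/\eta_{0}$, so choosing $\eta_{1}>\eta_{0}$ large enough that $\eta_{1}^{2}/\eta_{0}$ exceeds an explicit absolute constant delivers the desired inequality for every $\eta\ge\eta_{1}$ and every admissible $p$. The main obstacle, and the reason for the gymnastics with the parameter $\eta$, is precisely this comparison: one must confirm that the purely negative exponential gain $-uT_{0}$ beats the polynomial gain $T_{0}^{2p}$ uniformly in $p\le q_{N}$, and this is exactly what fixes how much larger $\eta_{1}$ must be than $\eta_{0}$.
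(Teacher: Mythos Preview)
Your approach is essentially the same as the paper's: both start from the bound (\ref{estimate simple-1}), bound $\cosh$ by the exponential, absorb the term $|z|\sqrt{t}$ into half of the remaining decay $e^{-2ut}$ for $t\ge T_{0}$, and then compare the resulting Gamma-type tail against the right-hand side using the uniform lower bound $|\mathcal{G}_{p}(\theta_{M})|\ge\epsilon_{0}$ from (\ref{lower bound for terrrrms all the terrrms}). The only cosmetic difference is that the paper extracts the factor $e^{-uT_{0}}$ first and then extends to a full Gamma integral $\int_{0}^{\infty}t^{2q_{N}+B(\alpha)}e^{-ut/2}\,dt$, whereas you estimate the tail directly by $T_{0}^{\,2p+B(\alpha)}e^{-uT_{0}}/u$; the resulting logarithmic comparisons are the same.

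One slip to fix: you write that the positive contribution on the left is $O(q_{N}\log^{2}q_{N})$, but since $\log T_{0}=2\log\eta+2\log q_{N}+2\log\log q_{N}\asymp\log q_{N}$ and $p\le q_{N}$, it is actually $O(q_{N}\log q_{N})$. This matters: with the stated $\log^{2}$ bound your conclusion that a \emph{fixed} $\eta_{1}^{2}/\eta_{0}$ suffices would not follow (one would need $\eta_{1}^{2}/\eta_{0}\gg\log q_{N}$). With the correct order $O(q_{N}\log q_{N})$ your final sentence is justified and matches the paper's computation exactly.
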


\begin{proof}
Using (\ref{estimate simple-1}), it is simple to check that, for
a sufficiently large $\eta_{1}$ and $\eta\geq\eta_{1}$, that the
left-hand side of (\ref{inequality claim 4}) can be bounded as follows
\begin{align*}
\intop_{\eta^{2}q_{N}^{2}\log^{2}(q_{N})}^{\infty}t^{2p}\,\tilde{F}_{z,\alpha}\left(\frac{\alpha}{4}+it\right)\,\cosh\left(\left(\frac{\pi}{2}-\frac{2}{\eta_{0}q_{N}\log(q_{N})}\right)t\right)\,dt\\
\leq e^{-\frac{\eta^{2}}{\eta_{0}}q_{N}\log(q_{N})}\intop_{\eta^{2}q_{N}^{2}\log^{2}(q_{N})}^{\infty}t^{2q_{N}}\,\left|\tilde{F}_{z,\alpha}\left(\frac{\alpha}{4}+it\right)\right|\,\exp\left(\left(\frac{\pi}{2}-\frac{1}{\eta_{0}q_{N}\log(q_{N})}\right)t\right)\,dt\\
\ll_{\alpha,z,\Lambda}e^{-\frac{\eta^{2}}{\eta_{0}}q_{N}\log(q_{N})}\intop_{\eta^{2}q_{N}^{2}\log^{2}(q_{N})}^{\infty}t^{2q_{N}+B(\alpha)}\,\exp\left(-\frac{t}{\eta_{0}q_{N}\log(q_{N})}+|z|\sqrt{t}\right)\,\,dt\\
\ll_{\alpha,z,\Lambda}e^{-\frac{\eta^{2}}{\eta_{0}}q_{N}\log(q_{N})}\intop_{\eta^{2}q_{N}^{2}\log^{2}(q_{N})}^{\infty}t^{2q_{N}+B(\alpha)}\,\exp\left(-\frac{t}{2\eta_{0}q_{N}\log(q_{N})}\right)\,dt\\
\ll_{\alpha,z,\Lambda}\left(2\eta_{0}q_{N}\log(q_{N})\right)^{2q_{N}+B(\alpha)+1}e^{-\frac{\eta^{2}}{\eta_{0}}q_{N}\log(q_{N})}\,\left(2q_{N}+[B(\alpha)]+1\right)!\,.
\end{align*}
In the above inequalities, $\ll_{\alpha,z,\Lambda}$ stands for a
constant depending on $\alpha,z$ and the pair of sequences $\left(c_{j},\lambda_{j}\right)_{j\in\mathbb{Z}\setminus\{0\}}$. 
By Stirling's formula, we have that the latter expression is bounded
by
\[
\exp\left\{ -\frac{\eta^{2}}{\eta_{0}}q_{N}\log(q_{N})+\left(2q_{N}+B(\alpha)+1\right)\log\left(2\eta_{0}q_{N}\log(q_{N})\right)+\left(2q_{N}+[B(\alpha)]+1\right)\log\left(2q_{N}+[B(\alpha)]+1\right)+O\left(q_{N}\right)\right\} 
\]
and so it is clear that, for $\eta\geq\eta_{1}>\eta_{0}$ sufficiently
large, the dominant term in the exponential will be $-\frac{\eta^{2}}{\eta_{0}}q_{N}\log(q_{N})$
and this proves (\ref{inequality claim 4}).
\end{proof}

\bigskip{}

By the previous claim and (\ref{inequality for the modulus}), we found that for each $p\in\left(q_{n}\right)_{N_{0}\leq n\leq N}$,
there exists some sufficiently large number $\eta_{1}>\eta_{0}$ such
that the sequence of moments
\[
\left(\tilde{M}_{p}\right)_{p\in(q_{n})_{N_{0}\leq n\leq N}}:=\intop_{0}^{\eta_{1}^{2}q_{N}^{2}\log^{2}(q_{N})}t^{2p}\,\tilde{F}_{z,\alpha}\left(\frac{\alpha}{4}+it\right)\,\cosh\left(\left(\frac{\pi}{2}-\frac{2}{\eta_{0}q_{N}\log(q_{N})}\right)t\right)\,dt
\]
must have the same sign as the sequence $\left(s_{p}\right)_{p\in\left(q_{n}\right)_{N_{0}\leq n\leq N}}$
defined by (\ref{sp sequence defined here}). According to Claim \ref{claim 3.1},
$s_{\ell}$ and $s_{\ell+1}$ have always distinct signs, and so the
sequence $\left(s_{p}\right)_{p\in\left(q_{n}\right)_{N_{0}\leq n\leq N}}$
has exactly $N-N_{0}-1$ sign changes. By Fej\'er's theorem, we then
have that
\begin{equation}
N_{\alpha,z}\left(\eta_{1}^{2}\,q_{N}^{2}\log^{2}(q_{N})\right)\geq N-N_{0}-1>\frac{N}{2},\label{almost at the end}
\end{equation}
since $N\geq2N_{0}+2$ by hypothesis. To conclude, we just need to
connect $q_{N}$ with $N$. We divide the final argument in two cases
(recall that we are assuming that $0<\theta_{M}<\frac{\pi}{2}$).
\begin{enumerate}
\item If $\theta_{M}/\pi\in\mathbb{Q}$, say $\theta_{M}=\pi\frac{a}{b}$,
then the explicit construction (\ref{construction of the sequence rational case})
shows that, for $N\geq3$, $N>\frac{q_{N}-q_{2}}{b}\geq\frac{q_{N}}{2b}$.
Thus, (\ref{almost at the end}) implies
\[
N_{\alpha,z}\left(\eta_{1}^{2}\,q_{N}^{2}\log^{2}(q_{N})\right)>\frac{q_{N}}{4b},
\]
proving (\ref{result proved for zeta alpha}).
\item If $\theta_{M}/\pi\notin\mathbb{Q}$ then, by Kronecker's lemma, the
sequence $\left(\left\{ n\,\frac{2\theta_{M}}{\pi}\right\} \right)_{n\in\mathbb{N}}$
is uniformly distributed on the interval $[0,1]$, which means that
\begin{equation}
\lim_{X\rightarrow\infty}\frac{\#\left\{ 1\leq m\leq X\,:\,2m\theta_{M}\in\left(\mathcal{I}^{+}\cup\mathcal{I}^{-}\right)\right\} }{X}=\frac{|\mathcal{I^{+}\cup I^{-}}|}{2\pi}=\frac{2\theta_{M}}{\pi}.\label{definition uniform distribution}
\end{equation}
Thus, taking $X=q_{N}$ in (\ref{definition uniform distribution})
we obtain, for $N\geq2N_{0}+2$ sufficiently large, 
\[
N_{\alpha,z}\left(\eta_{1}^{2}\,q_{N}^{2}\log^{2}(q_{N})\right)>\frac{\theta_{M}}{\pi}q_{N},
\]
which proves (\ref{result proved for zeta alpha}). $\blacksquare$
\end{enumerate}

\section{Lemmas for the proof of Theorem \ref{theorem 1.2}}\label{lemmas cusp forms section}

Analogously to (\ref{estimate simple-1}), we can use the estimate
(\ref{bound for confluent hypergeometric useful Rcomb}) to prove
that
\begin{equation}
\left|\tilde{G}_{z,f}\left(\frac{k}{2}+it\right)\right| \leq\sum_{j\neq0}\left|c_{j}\,\eta_{f}\left(\frac{k}{2}+i\,(t+\lambda_{j})\right)\text{Re}\left(\,_{1}F_{1}\left(\frac{k}{2}-i\,(t+\lambda_{j});\,k;\,\frac{z^{2}}{4}\right)\right)\right|\ll_{k,z}C_{\lambda}\,\sum_{j=1}^{\infty}|c_{j}|\,|t|^{B(k)}e^{-\frac{\pi}{2}|t|+|z|\sqrt{|t|}},\label{bound 1F1 and G cusp forms}
\end{equation}
for some exponent $B(k)$ depending on the weight $k$ (which comes
from convex estimates for $L(s,f)$). The next lemma is an analogue
of Lemma \ref{lemma 2.1}. For a proof, we refer to {[}\cite{RYCE}, p. 34, eq. (2.78){]}.

\begin{lemma}\label{lemma 4.1}
Let $f(\tau)$ be a holomorphic cusp form of weight $k$ for the full
modular group and $a_{f}(n)$ be its Fourier coefficients. Then for
$\text{Re}(x)>0$ and any $y\in\mathbb{C}$, the following transformation
formula takes place 
\begin{equation}
\sum_{n=1}^{\infty}a_{f}(n)\,n^{\frac{1-k}{2}}\,e^{-2\pi n\,x}\,J_{k-1}\left(\sqrt{2\pi n}\,y\right)=\frac{(-1)^{k/2}\,e^{-\frac{y^{2}}{4x}}}{x}\,\sum_{n=1}^{\infty}a_{f}(n)\,n^{\frac{1-k}{2}}\,e^{-\frac{2\pi n}{x}}\,I_{k-1}\left(\sqrt{2\pi n}\,\,\frac{y}{x}\right)\label{summation formula cusp forms lemma}
\end{equation}
or, equivalently,
\begin{equation}
\psi_{f}(x,z)=(-1)^{k/2}\,e^{-\frac{z^{2}}{4}}x^{-k}\,\psi_{f}\left(\frac{1}{x},\,iz\right),\,\,\,\,\text{Re}(x)>0,\,z\in\mathbb{C},\label{direct identity cusp form twisted}
\end{equation}
where $\psi_{f}(x,z)$ is the analogue of Jacobi's $\psi-$function
(\ref{Jacobi theta function cusp forms}), 
\begin{equation}
\psi_{f}(x,z):=(k-1)!\,\left(\sqrt{\frac{\pi x}{2}}\,z\right)^{1-k}\sum_{n=1}^{\infty}a_{f}(n)n^{\frac{1-k}{2}}\,e^{-2\pi n\,x}\,J_{k-1}\left(\sqrt{2\pi n\,x}\,z\right),\,\,\text{Re}(x)>0,\,\,z\in\mathbb{C}.\label{Jacobi theta function cusp forms lemmata section}
\end{equation}
\end{lemma}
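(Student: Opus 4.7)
The plan is to leverage the reflection formula \eqref{Reflection formula for cusp forms} already established in the introduction and to reduce the lemma to a manipulation of the series defining $\psi_{f}(x,z)$. The second assertion \eqref{direct identity cusp form twisted} follows immediately by dividing both sides of \eqref{Reflection formula for cusp forms} by $x^{k/2}$, so the real task is to show that this functional equation for $\psi_{f}$ is equivalent to the summation formula \eqref{summation formula cusp forms lemma}.

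To carry out the equivalence, I would first expand $\psi_{f}(1/x,iz)$ from its definition \eqref{Jacobi theta function cusp forms lemmata section}. The classical Bessel identity $J_{\nu}(iw)=i^{\nu}I_{\nu}(w)$ applied with $\nu=k-1$ produces a factor $i^{k-1}$ on every Bessel term, which is exactly compensated by the $i^{1-k}$ emerging from the prefactor $(iz)^{1-k}$; thus $\psi_{f}(1/x,iz)$ collapses to a real series involving only $I_{k-1}$. Substituting this into \eqref{direct identity cusp form twisted} and expanding the left-hand side as well, the prefactors $(\sqrt{\pi x/2}\,z)^{1-k}$ and $x^{-k}(\sqrt{\pi/(2x)}\,z)^{1-k}$ share the common factor $(\sqrt{\pi/2}\,z)^{1-k}$, whose cancellation leaves an extra $x^{-(k-1)/2}$ on the right-hand side. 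The final substitution $y=\sqrt{x}\,z$ converts $z^{2}/4\mapsto y^{2}/(4x)$, $\sqrt{2\pi nx}\,z\mapsto\sqrt{2\pi n}\,y$ and $\sqrt{2\pi n/x}\,z\mapsto\sqrt{2\pi n}\,y/x$, yielding \eqref{summation formula cusp forms lemma} precisely.

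The nontrivial content sits in the reflection formula \eqref{Reflection formula for cusp forms} itself, whose proof is carried out in \cite{RYCE} via Mellin inversion: Stirling's formula \eqref{preliminary stirling} together with a convex bound for $L(s,f)$ guarantees absolute convergence of the inverse Mellin integral that represents $x^{k/2}\psi_{f}(x,z)$, and the symmetry $t\mapsto -t$ on the integrand, produced by combining the Hecke functional equation \eqref{functional equation cusp form} with Kummer's transformation \eqref{Kummer confluent transformation} applied to $_{1}F_{1}(k/2+it;k;-z^{2}/4)$, generates the reflected $\psi_{f}(1/x,iz)$ on the right-hand side. Since $L(s,f)$ is entire, moving contours picks up no residues and the argument is cleaner than in the $\zeta_{\alpha}$ case. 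The only point I expect to require care is tracking the cumulative factor $i^{1-k}\cdot i^{k-1}=1$ through the Bessel reduction and verifying that the sign $(-1)^{k/2}$ lines up correctly; this is purely bookkeeping, so no serious obstacle is anticipated.
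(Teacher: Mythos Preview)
Your proposal is correct and matches the paper's treatment: the paper does not prove Lemma~\ref{lemma 4.1} at all but simply refers to \cite{RYCE}, and you likewise defer the substantive content (the Mellin-inversion argument behind \eqref{Reflection formula for cusp forms}) to that reference while supplying the routine bookkeeping that shows \eqref{summation formula cusp forms lemma} and \eqref{direct identity cusp form twisted} are equivalent restatements of the first equality in \eqref{Reflection formula for cusp forms}. The Bessel reduction $J_{k-1}(iw)=i^{k-1}I_{k-1}(w)$, the cancellation $i^{1-k}\cdot i^{k-1}=1$, and the substitution $y=\sqrt{x}\,z$ are all handled correctly.
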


\begin{lemma}\label{lemma 4.2}
Let $f(\tau)$ be a holomorphic cusp form of weight $k$ for the full
modular group and $\psi_{f}(x,z)$ be the generalized Jacobi's $\psi-$function
attached to it, (\ref{Jacobi theta function cusp forms lemmata section}). Assume
that $z\in\mathbb{R}$ satisfies the condition
\begin{equation}
-\frac{\sqrt{\pi}}{3}\leq z\leq\frac{\sqrt{\pi}}{3}.\label{condition cusp forms 1/6}
\end{equation}
Then there is an absolute constant $A$ and a  constant $C$ depending
only on $k$ such that, for any $0<u<\frac{\pi}{4}$ and every $p\in \mathbb{N}_{0}$,
\begin{equation}
\left|\frac{d^{p}}{du^{p}}\psi_{f}\left(ie^{-2iu},z\right)\right|<C\,\frac{2^{7p}p!}{u^{k+p}}e^{-\frac{A}{u}}.\label{bound derivative only in direction of cusp forms}
\end{equation}

\end{lemma}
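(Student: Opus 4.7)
The plan is to run an argument in close parallel to Lemma 2.2, using the transformation formula from Lemma 4.1 as the substitute for (2.22); the argument actually simplifies because Lemma 4.1 has no ``boundary'' term analogous to the $+1$ appearing in the theta transformation. First, fix $u_0\in(0,\pi/4)$ and a parameter $\lambda\in(0,1)$ to be chosen later. Observing that $\text{Re}(ie^{-2iw}) = e^{2\,\text{Im}(w)}\sin(2\,\text{Re}(w)) > 0$ throughout the disk $D_{\lambda u_0}(u_0)$, the function $\psi_f(ie^{-2iw},z)$ is analytic there, so Cauchy's integral formula gives
\[
\left[\frac{d^p}{du^p}\psi_f(ie^{-2iu},z)\right]_{u=u_0} = \frac{p!}{2\pi i}\oint_{C_{\lambda u_0}(u_0)}\frac{\psi_f(ie^{-2iw},z)}{(w-u_0)^{p+1}}\,dw,
\]
and the problem reduces to a uniform bound for $|\psi_f(ie^{-2iw},z)|$ on $C_{\lambda u_0}(u_0)$.

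Second, writing $ie^{-2iw}=i+2e^{-iw}\sin(w)$ and applying Lemma 4.1 with $x=2e^{-iw}\sin(w)$, $y=e^{i(\pi/4-w)}z$ yields
\[
\psi_f(ie^{-2iw},z) = (-1)^{k/2}(k-1)!\left(\sqrt{\pi}\,e^{i(\pi/4-w)}z\right)^{1-k}\frac{e^{-iz^2 e^{-iw}/(8\sin(w))}}{2e^{-iw}\sin(w)}\sum_{n=1}^\infty(-1)^n a_f(n)\,n^{\frac{1-k}{2}}e^{-\pi n\cot(w)}\,I_{k-1}\!\left(\frac{\sqrt{2\pi n}\,e^{i\pi/4}z}{2\sin(w)}\right),
\]
which is the clean cusp-form analogue of (2.28). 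Now estimate factor by factor exactly as in (2.32)--(2.33): the prefactor $(\sqrt{\pi}\,e^{i(\pi/4-w)}z)^{1-k}$ contributes $|z|^{1-k}$ times constants depending only on $k$; the factor $|e^{-iz^2 e^{-iw}/(8\sin(w))}/(2e^{-iw}\sin(w))|$ is bounded by $\pi e^{-z^2/8+\lambda u_0}/(4(1-\lambda)u_0)$ times a harmless exponential in $\sinh(2\,\text{Im}(w))/(\cosh(2\,\text{Im}(w))-\cos(2\,\text{Re}(w)))$; and since $k-1\geq 11>0$, the inequality (2.36) bounds $|I_{k-1}(\cdot)|$ by an expression of order $u_0^{-(k-1)}$ times the exponential $\exp(\sqrt{\pi n}|z|/\sqrt{\cosh(2\,\text{Im}(w))-\cos(2\,\text{Re}(w))})$.

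Third, combining these bounds, each term of the series is multiplied by a factor $\exp[-\pi\sin(2\,\text{Re}(w))/(\cosh(2\,\text{Im}(w))-\cos(2\,\text{Re}(w)))\cdot R_w(\sqrt{n})]$, where
\[
R_w(X):=X^2-\frac{|z|}{\sqrt{\pi}}\,\frac{\sqrt{\cosh(2\,\text{Im}(w))-\cos(2\,\text{Re}(w))}}{\sin(2\,\text{Re}(w))}\,X
\]
is the cusp-form counterpart of the polynomial $P_w$ in (2.34). Applying the Jordan-type bounds (2.38)--(2.42), one sees that $\sqrt{\cosh(2\,\text{Im}(w))-\cos(2\,\text{Re}(w))}/\sin(2\,\text{Re}(w))$ is uniformly bounded above on $C_{\lambda u_0}(u_0)$ by a quantity approximately equal to $\pi/2$ once $\lambda=0.01$; this makes $R_w(1)\geq 1-|z|\sqrt{\pi}/2\geq 1-\pi/6>0$ under the hypothesis $|z|\leq\sqrt{\pi}/3$, and more generally $R_w(\sqrt{n})\geq n/2$ for all $n\geq 2$ by a routine quadratic estimate. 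Combined with the Hecke--Deligne bound $|a_f(n)|\ll n^{k/2}$, this yields exponential decay $e^{-A/u_0}$ in $n=1$ and absolute convergence of the tail, so that altogether $|\psi_f(ie^{-2iw},z)|<C\,u_0^{-k}\,e^{-A/u_0}$ uniformly on the circle. Substituting into the Cauchy integral and using $1/\lambda = 100<2^7$ produces
\[
\left|\left[\frac{d^p}{du^p}\psi_f(ie^{-2iu},z)\right]_{u=u_0}\right|\leq\frac{p!}{(\lambda u_0)^p}\cdot\frac{C}{u_0^k}\,e^{-A/u_0}<C\,\frac{2^{7p}p!}{u_0^{k+p}}\,e^{-A/u_0},
\]
which is (\ref{bound derivative only in direction of cusp forms}). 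The main obstacle is the numerical verification that the threshold $|z|\leq\sqrt{\pi}/3$ is compatible with the choice $\lambda=0.01$ so that $R_w(1)$ is bounded below by an absolute positive constant; this replays the chain of elementary inequalities (2.42)--(2.44) in Lemma 2.2 but with a different numerical coefficient arising from the $\sqrt{2\pi n}$ scaling in Lemma 4.1, and introduces no new ideas.
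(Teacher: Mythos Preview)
Your proposal is correct and follows essentially the same route as the paper's own proof: Cauchy's formula on the circle $C_{\lambda u_0}(u_0)$, then Lemma~4.1 with the substitution $x=2e^{-iw}\sin(w)$, $y=e^{i(\pi/4-w)}z$, followed by the uniform bound (2.36) on $I_{k-1}$ and the Jordan-type inequalities (2.38)--(2.42), with the final choice $\lambda=0.01$. The only organizational difference is that the paper packages the $\sinh(2\,\text{Im}(w))$ contribution into its polynomial $P_w^\star(X)$ (which therefore has a constant term), whereas you keep it separate as a ``harmless'' prefactor; both bookkeeping choices lead to the same numerical verification, though you should be explicit that this prefactor is not bounded as $u_0\to 0$ but rather contributes $\exp\bigl(O(\lambda z^2)/u_0\bigr)$, which is dominated by the main decay once $\lambda=0.01$.
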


\begin{proof}
The proof is exactly the same as in Lemma \ref{lemma 2.2}, so we only give a brief
sketch. Starting with Cauchy's formula,
\begin{equation}
\left[\frac{d^{p}}{du^{p}}\psi_{f}\left(ie^{-2iu},z\right)\right]_{u=u_{0}}=\frac{p!}{2\pi i}\,\intop_{C_{\lambda u_{0}}(u_{0})}\,\frac{\psi_{f}\left(ie^{-2iw},z\right)}{(w-u_{0})^{p+1}}\,dw,\label{Cauchy integral as starting point}
\end{equation}
we need to find a suitable bound for $\psi_{f}\left(ie^{-2iw},z\right),\,w\in C_{\lambda u_{0}}(u_{0})$.
This is done through Lemma \ref{lemma 4.1} above: replacing there $x$ by $2e^{-iw}\sin(w)$ and $y$ by $e^{i\left(\frac{\pi}{4}-w\right)}\,z$,
we obtain the transformation formula
\begin{align*}
\psi_{f}\left(ie^{-2iw},z\right) &=(k-1)!\,\left(\sqrt{\frac{\pi ie^{-2iw}}{2}}\,z\right)^{1-k}\sum_{n=1}^{\infty}a_{f}(n)n^{\frac{1-k}{2}}\,e^{-2\pi n\cdot2e^{-iw}\sin(w)}\,J_{k-1}\left(\sqrt{2\pi nie^{-2iw}}\,z\right)\\
 & =(k-1)!\,\left(\sqrt{\frac{\pi}{2}}e^{i\left(\frac{\pi}{4}-w\right)}z\right)^{1-k}\frac{(-1)^{k/2}\,e^{-\frac{ie^{-iw}z^{2}}{8\sin(w)}}}{2e^{-iw}\sin(w)}\,\sum_{n=1}^{\infty}(-1)^{n}a_{f}(n)\,n^{\frac{1-k}{2}}\,e^{-\frac{\pi n}{\tan(w)}}\,I_{k-1}\left(\sqrt{\frac{\pi n}{2}}\frac{e^{i\frac{\pi}{4}}z}{\sin(w)}\right).
\end{align*}
From this point on, we can easily adapt
the steps given in the proof of the bounds (\ref{first inequality in the combination}) and (\ref{bound for the modulus in fact}). Extracting the first term
of the series in the above expression,\footnote{without any loss of generality, we suppose that $a_{f}(1)\neq0$.
If $c$ is the smallest integer such that $a_{f}(c)\neq0$, then it
would be possible to enlarge the interval (\ref{interval z in cusp form case}) to $\left[-\frac{\sqrt{\pi}c}{3},\frac{\sqrt{\pi}c}{3}\right]$.} we obtain
\begin{align}
\left|\psi_{f}\left(ie^{-2iw},z\right)\right| & \leq d_{k}\,\frac{\pi^{k}e^{-\frac{z^{2}}{8}+\frac{u_{0}}{2}}}{(2u_{0})^{k}}\exp\left\{ -\frac{\pi\sin(2\text{Re}(w))}{\cosh(2\text{Im}(w))-\cos(2\text{Re}(w))}P_{w}^{\star}(1)\right\} \nonumber \\
 & \times\sum_{n=1}^{\infty}\left|\frac{a_{f}(n)}{a_{f}(1)}\right|\exp\left[-\frac{\pi\sin(2\text{Re}(w))}{\cosh(2\text{Im}(w))-\cos(2\text{Re}(w))}\left(P_{w}^{\star}(\sqrt{n})-P_{w}^{\star}(1)\right)\right],\label{bound intermediate for psi f cusp}
\end{align}
where $P_{w}^{\star}(X)$ is the real-valued polynomial
\[
P_{w}^{\star}\left(X\right):=X^{2}-\frac{|z|}{\sqrt{\pi}\sin(2\text{Re}(w))}\,\sqrt{\cosh(2\text{Im}(w))-\cos(2\text{Re}(w))}\,X+\frac{z^{2}}{8\pi}\,\frac{\sinh(2\text{Im}(w))}{\sin(2\text{Re}(w))}.
\]
Note that our expression (\ref{bound intermediate for psi f cusp}) is completely analogous to (\ref{bound for the modulus in fact}). Repeating
the steps in (\ref{bound for the series!}), we find that the series
on the right-hand side of (\ref{bound intermediate for psi f cusp})
is uniformly bounded for any $w\in C_{0.01u_{0}}(u_{0})$ and $z$ satisfying (\ref{condition cusp forms 1/6}). The resulting constant, $M_{k}$, will only dependent on $k$. Analogously to (\ref{intermediate bound})
and (\ref{bound isolated exponential term!}),
\begin{align*}
\left|\psi_{f}\left(ie^{-2iw},z\right)\right| & \leq\frac{d_{k}M_{k}e^{\frac{\pi}{8}}\,\pi^{k}e^{-\frac{z^{2}}{8}}}{(2u_{0})^{k}}\exp\left\{ -\frac{\pi\sin(2\text{Re}(w))}{\cosh(2\text{Im}(w))-\cos(2\text{Re}(w))}P_{w}^{\star}(1)\right\} \\
 & <\frac{d_{k}M_{k}e^{\frac{\pi}{8}}\,\pi^{k}e^{-\frac{z^{2}}{8}}}{(2u_{0})^{k}}\exp\left[-\frac{1}{u_{0}}\left\{ \frac{1-\lambda}{e^{\frac{\pi^{2}\lambda^{2}}{24}}(1+\lambda)^{2}}-\frac{\pi|z|}{2(1-\lambda)}-\frac{\pi z^{2}\lambda e^{\frac{\pi^{2}\lambda^{2}}{24}}}{16(1-\lambda)^{2}}\right\} \right]\\
 & <\frac{d_{k}M_{k}e^{\frac{\pi}{8}}\,\pi^{k}e^{-\frac{z^{2}}{8}}}{(2u_{0})^{k}}\exp\left[-\frac{1}{u_{0}}\left\{ \frac{1-\lambda}{e^{\frac{\pi^{2}\lambda^{2}}{24}}(1+\lambda)^{2}}-\frac{\pi^{\frac{3}{2}}}{6(1-\lambda)}-\frac{\pi^{2}\lambda e^{\frac{\pi^{2}\lambda^{2}}{24}}}{144(1-\lambda)^{2}}\right\} \right].
\end{align*}
Thus, if we pick $\lambda=0.01$, the term on the braces is greater than $0.03$: this implies that
\[
\left|\psi_{f}\left(ie^{-2iw},z\right)\right|<\frac{d_{k}M_{k}e^{\frac{\pi}{8}}\,\pi^{k}e^{-\frac{z^{2}}{8}}}{(2u_{0})^{k}}\exp\left[-\frac{0.03}{u_{0}}\right]=\frac{C}{u_{0}^{k}}\,e^{-\frac{A}{u_{0}}},
\]
where $C$ depends on $k$ and $A$ is an absolute constant, say $A=0.03$.
Using Cauchy's formula (\ref{Cauchy integral as starting point})
and mimicking the steps in (\ref{steps to get bound derivativeees}),
we find that (\ref{bound derivative only in direction of cusp forms})
must hold for every $p\in\mathbb{N}_{0}$.
\end{proof}

We finish this section with two lemmas analogous to Lemmas \ref{lemma 2.3} and
\ref{lemma 2.4}. Since their proofs are exactly the same, we shall omit them. 

\begin{lemma}\label{lemma 4.3}
Let $g:\,\mathbb{C}\longmapsto\mathbb{C}$ be an analytic function.
If $z\in\left[-\frac{1}{6}\sqrt{\frac{\pi\alpha}{2}},\frac{1}{6}\sqrt{\frac{\pi\alpha}{2}}\right]$
then, for arbitrary $p\in\mathbb{N}_{0}$ and $0<u<\frac{\pi}{4}$,
the following relation holds
\begin{equation}
\left|\frac{d^{2p}}{du^{2p}}\left\{ g(u)\,\psi_{f}\left(ie^{-2iu},z\right)\right\} \right|<D\frac{2^{14p}(2p)!e^{-\frac{A}{u}}}{u^{k+2p}}\left\Vert g\right\Vert _{L^{\infty}(C_{1}(0))},\label{bound derivative with analytic function cussp form case}
\end{equation}
where $A$ is some absolute constant and $D$ only depends on $k$.
\end{lemma}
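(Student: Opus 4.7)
The plan is to mimic exactly the proof of Lemma~\ref{lemma 2.3}, with Lemma~\ref{lemma 4.2} playing the role of Lemma~\ref{lemma 2.2}. The only conceptual simplification is that here $\psi_f(ie^{-2iu},z)$ itself (rather than $1+\psi_\alpha(ie^{-2iu},z)$) already decays fast enough as $u\to 0^+$ by Lemma~\ref{lemma 4.2}, so there is no need to split off a main term of the form $-2\sinh(z^2/8)\,g^{(2p)}(u)$: the statement is phrased directly as an upper bound. I note in passing that the condition on $z$ displayed in the statement of Lemma~\ref{lemma 4.3} is almost certainly a typo carried over from Lemma~\ref{lemma 2.3}; the hypothesis actually needed is (\ref{condition cusp forms 1/6}), which is the one required to invoke Lemma~\ref{lemma 4.2}.

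First, I would apply the general Leibniz rule,
\[
\frac{d^{2p}}{du^{2p}}\bigl\{ g(u)\,\psi_f(ie^{-2iu},z)\bigr\} \;=\; \sum_{j=0}^{2p}\binom{2p}{j}\,g^{(2p-j)}(u)\,\frac{d^{j}}{du^{j}}\psi_f(ie^{-2iu},z),
\]
so that the problem reduces to bounding the two factors in the sum. For the derivatives of $g$, since $g$ is entire and $0<u<\pi/4$ the closed disk $\overline{D(u,\,1-\pi/4)}$ lies inside $\overline{D(0,1)}$, so Cauchy's estimate gives
\[
|g^{(n)}(u)| \;\leq\; \frac{n!}{(1-\pi/4)^{n}}\,\|g\|_{L^{\infty}(C_{1}(0))} \;\leq\; C_{0}^{\,n}\,n!\,\|g\|_{L^{\infty}(C_{1}(0))}
\]
for an absolute constant $C_0>0$. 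For the derivatives of $\psi_f(ie^{-2iu},z)$, Lemma~\ref{lemma 4.2} supplies the bound
\[
\left|\frac{d^{j}}{du^{j}}\psi_f(ie^{-2iu},z)\right| \;<\; C\,\frac{2^{7j}\,j!}{u^{k+j}}\,e^{-A/u}.
\]

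Combining these estimates via the identity $\binom{2p}{j}(2p-j)!\,j! = (2p)!$ and factoring out $u^{-k}$, the Leibniz sum is bounded above by
\[
C\,\|g\|_{L^{\infty}(C_{1}(0))}\,\frac{(2p)!\,e^{-A/u}}{u^{k}}\,\sum_{j=0}^{2p}C_{0}^{\,2p-j}\left(\frac{2^{7}}{u}\right)^{j}.
\]
For $0<u<\pi/4$ the ratio $2^{7}/(C_{0}\,u)$ exceeds $1$ comfortably, so the geometric sum is dominated (up to a multiplicative constant depending only on $C_0$) by its largest term $C_0^{\,0}(2^{7}/u)^{2p}=2^{14p}/u^{2p}$. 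Absorbing the remaining multiplicative constants into a new constant $D$ (depending only on $k$, through the $C$ of Lemma~\ref{lemma 4.2}) yields precisely (\ref{bound derivative with analytic function cussp form case}).

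The entire argument is bookkeeping; all of the analytic content is already packaged in Lemma~\ref{lemma 4.2}. The only point demanding a little care is checking that $D$ depends only on $k$ and not on $u$, $p$, or $g$, which is transparent from the chain of inequalities above. No substantive obstacle is expected.
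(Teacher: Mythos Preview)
Your proposal is correct and follows exactly the approach the paper intends: the paper explicitly states that the proof of Lemma~\ref{lemma 4.3} is ``exactly the same'' as that of Lemma~\ref{lemma 2.3} and omits it, and you have reproduced that argument (Leibniz rule, Cauchy estimates for $g^{(n)}$, Lemma~\ref{lemma 4.2} for the $\psi_f$-derivatives, then summing the resulting geometric series). Your treatment is in fact slightly more careful than the paper's own proof of Lemma~\ref{lemma 2.3} --- you use radius $1-\pi/4$ in the Cauchy estimate so the disk genuinely lies inside $\overline{D(0,1)}$, and you justify why the geometric sum is bounded by a constant times its top term --- and your remark that the displayed $z$-interval is a typo carried over from Lemma~\ref{lemma 2.3} (it should be condition~(\ref{condition cusp forms 1/6})) is well observed.
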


\begin{lemma} \label{lemma 4.4}
Let $-\frac{\pi}{4}<\omega<\frac{\pi}{4}$, $p\in\mathbb{N}$ and
$\tilde{G}_{z,f}\left(\frac{k}{2}+it\right)$ be the function defined
by (\ref{function defining coooombinations}). Then the following
integral representations hold\footnote{Note that the integral is written in terms of $i^{-k/2}\tilde{G}_{z,f}\left(\frac{k}{2}+it\right)$
because it defines a real function of $t\in\mathbb{R}$ which is even if $k\equiv0 \mod4$ or odd if $k\equiv 2\mod4$.}
\begin{equation}
\intop_{0}^{\infty}i^{-\frac{k}{2}}\,t^{2p}\tilde{G}_{z,f}\left(\frac{k}{2}+it\right)\,\cosh\left(2\omega t\right)dt=\frac{2\pi\,e^{\frac{z^{2}}{4}}}{2^{2p}}\,\text{Re}\,\left(i^{-\frac{k}{2}}\,\frac{d^{2p}}{d\omega^{2p}}\left\{ \sum_{j\neq0}c_{j}\,e^{i\omega k-2\omega\lambda_{j}}\,\psi_{f}\left(e^{2i\omega},z\right)\right\} \right),\,\,\,\,k\equiv 0\mod4\label{Formula in Lemma 4}
\end{equation}
and 
\begin{equation}
\intop_{0}^{\infty}i^{-\frac{k}{2}}\,t^{2p+1}\tilde{G}_{z,f}\left(\frac{k}{2}+it\right)\,\cosh\left(2\omega t\right)dt=\frac{2\pi}{2^{2p}}\,e^{\frac{z^{2}}{4}}\,\text{Re}\,\left(i^{-\frac{k}{2}}\,\frac{d^{2p+1}}{d\omega^{2p+1}}\left\{ \sum_{j\neq0}c_{j}\,e^{i\omega k-2\omega\lambda_{j}}\,\psi_{f}\left(e^{2i\omega},z\right)\right\} \right),\,\,\,\,k\equiv 2 \mod4,\label{formula odd case moments}
\end{equation}
where $\psi_{f}\left(x,z\right)$ is the ``cusp form analogue''
of Jacobi's $\psi-$function (\ref{Jacobi theta function cusp forms lemmata section}).
\end{lemma}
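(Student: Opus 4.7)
The plan is to mimic the proof of Lemma \ref{lemma 2.4}, replacing the transformation formula (\ref{equation in the setting of zeta alpha}) by the cusp form analogue (\ref{Reflection formula for cusp forms}). Since $L(s,f)$ is entire, the right-hand side of (\ref{Reflection formula for cusp forms}) has no residue-type constant term to account for, which simplifies the computation relative to Lemma \ref{lemma 2.4}. On the other hand, a new complication arises from the factor $i^{-k/2}$, which forces a case split on $k\bmod 4$.

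First I would specialize (\ref{Reflection formula for cusp forms}) to $x=e^{2i\omega}$ with $-\pi/4<\omega<\pi/4$, then apply Kummer's transformation (\ref{Kummer confluent transformation}) to replace the argument $-z^{2}/4$ of the hypergeometric by $+z^{2}/4$. The resulting identity reads
\[
\frac{e^{-z^{2}/4}}{2\pi}\intop_{-\infty}^{\infty}\eta_{f}\left(\tfrac{k}{2}+it\right)\,{}_{1}F_{1}\!\left(\tfrac{k}{2}-it;\,k;\,\tfrac{z^{2}}{4}\right)\,e^{2\omega t}\,dt=e^{ik\omega}\,\psi_{f}(e^{2i\omega},z).
\]
Next I would make the shift $t\mapsto t+\lambda_{j}$, multiply by $c_{j}$, and sum over $j\neq0$ (absolute convergence being guaranteed by $\sum|c_{j}|<\infty$ combined with the bound (\ref{bound 1F1 and G cusp forms})). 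Writing $A(t):=\sum_{j\neq0}c_{j}\,\eta_{f}(\tfrac{k}{2}+i(t+\lambda_{j}))\,{}_{1}F_{1}(\tfrac{k}{2}-i(t+\lambda_{j});k;z^{2}/4)$, this produces
\[
\frac{1}{2\pi}\intop_{-\infty}^{\infty}A(t)\,e^{2\omega t}\,dt=e^{z^{2}/4}\,\psi_{f}(e^{2i\omega},z)\sum_{j\neq0}c_{j}\,e^{ik\omega-2\omega\lambda_{j}}.
\]
Differentiating $n$ times in $\omega$ (justified by the same decay estimates) yields the analogous identity with $(2t)^{n}$ inserted inside the integral and the $n$-th $\omega$-derivative on the right.

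The next step is to identify $\tilde{G}_{z,f}(\tfrac{k}{2}+it)$ in terms of $A(t)$. Using the functional equation $\eta_{f}(s)=(-1)^{k/2}\eta_{f}(k-s)$, the reality of $a_{f}(n)$ and $z$, and Assumption \ref{assumption 1} (so that swapping $j\leftrightarrow-j$ maps the first hypergeometric term into the second one, up to a factor $(-1)^{k/2}$), a direct calculation gives $\tilde{G}_{z,f}(\tfrac{k}{2}+it)=A(t)+(-1)^{k/2}A(-t)$. In parallel, the symmetries $h_{k}(-\omega)=\overline{h_{k}(\omega)}$ (from $c_{-j}=c_{j}$, $\lambda_{-j}=-\lambda_{j}$, where $h_{k}(\omega):=\sum_{j\neq0}c_{j}e^{ik\omega-2\omega\lambda_{j}}$) and $\psi_{f}(e^{-2i\omega},z)=\overline{\psi_{f}(e^{2i\omega},z)}$ (from the reality of the coefficients and the entire/real-coefficient nature of $J_{k-1}$) show that the $\omega\mapsto-\omega$ companion of the right-hand side is its complex conjugate. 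Combining these two observations converts the sum $A(t)+(-1)^{k/2}A(-t)$, integrated against $e^{2\omega t}$ over $\mathbb{R}$, into a real-part expression.

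The final step is the case split. For $k\equiv0\bmod4$, the factor $(-1)^{k/2}=1$, the multiplier $i^{-k/2}$ is real, and $i^{-k/2}\tilde{G}_{z,f}(\tfrac{k}{2}+it)$ is even in $t$; consequently, $\int_{0}^{\infty}=\tfrac{1}{2}\int_{-\infty}^{\infty}$ and $\cosh(2\omega t)$ can be replaced by $e^{2\omega t}$ inside the symmetric integral, producing (\ref{Formula in Lemma 4}) with $t^{2p}$ and the $2p$-th derivative. For $k\equiv2\bmod4$ one has $(-1)^{k/2}=-1$, $i^{-k/2}$ is purely imaginary, and $i^{-k/2}\tilde{G}_{z,f}(\tfrac{k}{2}+it)$ is odd in $t$; the parity is restored by inserting $t^{2p+1}$, and the same argument yields (\ref{formula odd case moments}) with the $(2p+1)$-st derivative. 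The main obstacle is the careful bookkeeping in this final case split: one must track signs coming from the odd power of $t$, the factor $(-1)^{k/2}$ attached to $A(-t)$, and the behavior of $\omega$-derivatives under $\omega\mapsto-\omega$, then verify that these signs conspire to produce precisely the real part of a single expression $i^{-k/2}\,\frac{d^{n}}{d\omega^{n}}\{h_{k}(\omega)\,\psi_{f}(e^{2i\omega},z)\}$ in both cases.
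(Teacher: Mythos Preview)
Your proposal is correct and is precisely the route the paper intends: the authors state that the proof of Lemma~\ref{lemma 4.4} is ``exactly the same'' as that of Lemma~\ref{lemma 2.4} and omit all details. Your outline---specializing (\ref{Reflection formula for cusp forms}) at $x=e^{2i\omega}$, applying Kummer's relation, shifting $t\mapsto t+\lambda_{j}$ and summing over $j\neq 0$, differentiating in $\omega$, and then using $\tilde G_{z,f}(\tfrac{k}{2}+it)=A(t)+(-1)^{k/2}A(-t)$ together with the conjugation symmetry under $\omega\mapsto-\omega$ to separate the cases $k\equiv 0$ and $k\equiv 2\pmod 4$---is therefore more detailed than what the paper itself provides.
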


\section{Proof of Theorem \ref{theorem 1.2}}\label{proof of theorem 1.2 section}

In this section we follow closely the author's variant of de la Vall\'ee
Poussin's method \cite{RHALF, Poussin_zeros}. The details of the proof are given for the case where $k\equiv 0 \mod 4$ and a similar argument for $k\equiv 2 \mod4 $ will be devised at the end of the proof. 
Let $\left(\rho_{n}\right)_{n\in\mathbb{N}}$
be the sequence of zeros of odd order of $\tilde{G}_{z,f}(s)$ such
that $\text{Re}(\rho_{n})=\frac{k}{2}$. Then we can write $\rho_{n}:=\frac{k}{2}+i\tau_{n}$,
with $\tau_{n}>0$ being an increasing sequence\footnote{Note that if $\tilde{G}_{z,f}\left(\frac{k}{2}\right)=0$, we are
excluding this real zero from the sequence.}. If we show that there is some $h>0$ such that, for infinitely many
values of $n$, $\tau_{n}<h\,n^{2}$, our Theorem \ref{theorem 1.2} is proved. This
is the case because if we choose the sequence $T_{n}:=hn^{2}$, then
we find that $N_{f,z}\left(T_{n}\right)\geq N_{f,z}(\tau_{n})=n=\sqrt{\frac{T_{n}}{h}}$.
This establishes that
\begin{equation}
\limsup_{T\rightarrow\infty}\,\frac{N_{f,z}(T)}{\sqrt{T}}>\frac{1}{\sqrt{h}},\,\,\,\,\text{or equivalently }N_{f,z}(T)=\Omega\left(T^{\frac{1}{2}}\right).\label{explicit lower bound with explicit constant}
\end{equation}

Hence, for the sake of contradiction, let us assume that there is
some $N_{0}$ such that, for every $n\geq N_{0}$ and any $h>0$,
$\tau_{n}\geq h\,n^{2}$. We will now show that there exists some $h$ large enough that this assumption is contradicted. Indeed,
if we construct the real and entire function\footnote{The infinite product can be written because, due to Theorem 1.4 of
\cite{RYCE}, $\tilde{G}_{z,f}(s)$ has infinitely many zeros at the
critical line $\text{Re}(s)=\frac{k}{2}$.}
\begin{equation}
\varphi_{f,z}\left(y\right)=\prod_{\ell=1}^{\infty}\left(1-\frac{y^{2}}{\tau_{\ell}^{2}}\right)=\sum_{\ell=0}^{\infty}(-1)^{\ell}a_{2\ell}\,y^{2\ell},\label{Powert series for varphi (y)}
\end{equation}
we see that $a_{0}=1$ and, for $\ell\geq1$,
\begin{equation}
a_{2\ell}=\sum_{r_{1}\geq1}\sum_{r_{2}>r_{1}}...\sum_{r_{\ell}>r_{\ell-1}}\frac{1}{\tau_{r_{1}}^{2}\cdot...\cdot\tau_{r_{\ell}}^{2}}=\sum_{1\leq r_{1}<r_{2}<...<r_{\ell}}\frac{1}{\tau_{r_{1}}^{2}\cdot...\cdot\tau_{r_{\ell}}^{2}},\label{coefficient for the bound}
\end{equation}
where we are summing over $(r_{1},...,r_{\ell})\in\mathbb{N}^{\ell}\text{ such that }r_{1}<r_{2}<...<r_{\ell}$.
Note that, in the $k^{\text{th}}$ nested series in (\ref{coefficient for the bound}),
the index $r_{k}$ always satisfies $r_{k}\geq k$, due to the condition
$r_{k}>r_{k-1}>...>r_{1}\geq1$. From this point on, we just need
to find a suitable bound for $a_{2j}$. Considering the nested sum
above, we have two possibilities: if $1\leq k\leq\ell$ and $r_{k}\geq N_{0}$,
we know by the contradiction hypothesis that $\tau_{r_{k}}^{-2}\leq\frac{r_{k}^{-4}}{h^{2}}$,
while if $1\leq r_{k}\leq N_{0}-1$, we have that $\tau_{r_{k}}^{-2}\leq\frac{r_{k}^{-4}}{h^{\star2}}$
where $h^{\star}:=\min_{1\leq n\leq N_{0}-1}\left\{ \frac{\tau_{n}}{n^{2}}\right\} $.
But then if follows from the argument given in {[}\cite{RHALF}, p.
16, eq. (4.6){]} that for some $\mathscr{B}$ (only depending on $h$
and $N_{0}$), 
\begin{equation}
a_{2\ell}\leq\frac{\mathscr{B}}{h^{2\ell}}\,\sum_{1\leq r_{1}<r_{2}<...<r_{\ell}}\frac{1}{r_{1}^{4}\cdot...\cdot r_{\ell}^{4}}.\label{coefficient in the proof new}
\end{equation}
Defining the new coefficient on the right-hand side of (\ref{coefficient in the proof new})
\begin{equation}
b_{2\ell}:=\sum_{1\leq r_{1}<...<r_{\ell}}\,\frac{1}{r_{1}^{4}\cdot...\cdot r_{\ell}^{4}},\label{b2j definition}
\end{equation}
and following the steps given in [\cite{RHALF}, p. 17], this sequence
of numbers comes from Euler's infinite product representation of the
function 
\begin{equation}
\frac{\sinh(\pi\sqrt{y})\sin(\pi\sqrt{y})}{\pi^{2}y}=\prod_{\ell=1}^{\infty}\left(1+\frac{y}{\ell^{2}}\right)\prod_{j=1}^{\infty}\left(1-\frac{y}{\ell^{2}}\right)=\prod_{j=1}^{\infty}\left(1-\frac{y^{2}}{\ell^{4}}\right):=1+\sum_{\ell=1}^{\infty}(-1)^{\ell}b_{2\ell}\,y^{2\ell}.\label{sinh sin product}
\end{equation}
Thus, we can get precise information about $b_{2\ell}$ (and, consequently,
about $a_{2\ell}$) by interpreting them as the coefficients of the
Taylor series for the function on the left-hand side of (\ref{sinh sin product}). A standard computation of these coefficients gives 
\begin{equation}
b_{2\ell}:=\sum_{1\leq r_{1}<...<r_{\ell}}\,\frac{1}{r_{1}^{4}\cdot...\cdot r_{\ell}^{4}}=\frac{2^{2\ell+1}\pi^{4\ell}}{(4\ell+2)!}\implies a_{2\ell}\leq\mathscr{B}\,\frac{2^{2\ell+1}\pi^{4\ell}}{h^{2\ell}(4\ell+2)!}.\label{bound for this guy}
\end{equation}

Our proof will be concluded by seeing that (\ref{bound for this guy})
contradicts (\ref{Formula in Lemma 4}) and the bounds found
in Lemma \ref{lemma 4.2} above. Recall that, under the assumption $k\equiv 0 \mod4$, $i^{-k/2}\tilde{G}_{z,f}\left(\frac{k}{2}+it\right)$
is an even and real function and $\varphi_{f,z}(t)$ has the same
odd zeros as this function. Hence, by this construction, $i^{-k/2}\tilde{G}_{z,f}\left(\frac{k}{2}+it\right)\,\varphi_{f,z}(t)$
must be a real and even function with constant sign for any $t\in\mathbb{R}$.
Considering the continuous $Q_{f}:\,\left(0,\,\frac{\pi}{4}\right)\longmapsto\mathbb{R}$
defined by the integral 
\[
Q_{f}(u):=\intop_{0}^{\infty}i^{-\frac{k}{2}}\tilde{G}_{z,f}\left(\frac{k}{2}+it\right)\varphi_{f,z}(t)\,\cosh\left(\left(\frac{\pi}{2}-2u\right)t\right)\,dt,\,\,\,\,0<u<\frac{\pi}{4},
\]
we then have that $|Q_{f}(u)|$ will be positive decreasing. Our proof
will now show that this cannot be true if the bound (\ref{bound for this guy})
takes place.

If we use the power series (\ref{Powert series for varphi (y)}) for
$\varphi_{f,z}(t)$, we see that $Q_{f}(u)$ can be written as an
infinite series of the form
\begin{align}
Q_{f}(u)&=\intop_{0}^{\infty}i^{-\frac{k}{2}}\tilde{G}_{z,f}\left(\frac{k}{2}+it\right)\,\varphi_{f,z}(t)\,\cosh\left(\left(\frac{\pi}{2}-2u\right)t\right)\,dt\nonumber\\
&=\sum_{j=0}^{\infty}(-1)^{\ell}a_{2\ell}\,\intop_{0}^{\infty}i^{-\frac{k}{2}}\tilde{G}_{z,f}\left(\frac{k}{2}+it\right)\,t^{2\ell}\,\cosh\left(\left(\frac{\pi}{2}-2u\right)t\right)\,dt.\label{the interchange necessary for the proof}
\end{align}
Note that the interchange of the orders of summation and integral
in (\ref{the interchange necessary for the proof}) comes from Fubini's
theorem and the bounds (\ref{bound for this guy}): indeed 
\begin{align*}
\intop_{0}^{\infty}\sum_{\ell=0}^{\infty}|a_{2\ell}|t^{2\ell}\,\left|\tilde{G}_{z,f}\left(\frac{k}{2}+it\right)\right|\,\cosh\left(\left(\frac{\pi}{2}-2u\right)t\right)&\,dt \leq\mathscr{B}\,\intop_{0}^{\infty}\,\sum_{j=0}^{\infty}\frac{2^{2\ell+1}\pi^{4\ell}}{h^{2\ell}(4\ell+2)!}t^{2\ell}\,\left|\tilde{G}_{z,f}\left(\frac{k}{2}+it\right)\right|\,\cosh\left(\left(\frac{\pi}{2}-2u\right)t\right)\,dt\\
 & \leq2\mathcal{\mathscr{B}}\,\intop_{0}^{\infty}\sum_{j=0}^{\infty}\frac{1}{(4\ell)!}\,\left(\frac{2\pi^{2}t}{h}\right)^{2\ell}\left|\tilde{G}_{z,f}\left(\frac{k}{2}+it\right)\right|\,\cosh\left(\left(\frac{\pi}{2}-2u\right)t\right)\,dt\\
 & \leq2\mathscr{B}\,\intop_{0}^{\infty}\sum_{j=0}^{\infty}\frac{1}{\ell!}\,\left(\pi\sqrt{\frac{2t}{h}}\right)^{\ell}\left|\tilde{G}_{z,f}\left(\frac{k}{2}+it\right)\right|\,\cosh\left(\left(\frac{\pi}{2}-2u\right)t\right)\,dt\\
 & <2\mathscr{B}\,\intop_{0}^{\infty}\left|\tilde{G}_{z,f}\left(\frac{k}{2}+it\right)\right|\exp\left(\pi\sqrt{\frac{2t}{h}}+\left(\frac{\pi}{2}-2u\right)t\right)\,dt\\
 & \ll_{k,z,\Lambda}\,\intop_{0}^{\infty}|t|^{B(k)}\,\exp\left(-2ut+\left(\pi\sqrt{\frac{2}{h}}+|z|\right)\sqrt{t}\right)\,dt<\infty,
\end{align*}
where in the last step we have used the estimate (\ref{bound 1F1 and G cusp forms}).
Having assured that we can perform the operation (\ref{the interchange necessary for the proof}),
it now follows from Lemma \ref{lemma 4.4} above that   
\begin{align}
Q_{f}(u)=\sum_{j=0}^{\infty}(-1)^{\ell}a_{2\ell}\,\intop_{0}^{\infty}i^{-k/2}\tilde{G}_{z,f}\left(\frac{k}{2}+it\right)\,t^{2\ell}\,\cosh\left(\left(\frac{\pi}{2}-2u\right)t\right)\,dt\nonumber \\
=\sum_{\ell=0}^{\infty}(-1)^{\ell}a_{2\ell}\,\frac{2\pi}{2^{2\ell}}\,e^{\frac{z^{2}}{4}}\,\text{Re}\,\left(i^{-\frac{k}{2}}\frac{d^{2\ell}}{du^{2\ell}}\left\{ \sum_{j\neq0}c_{j}\,e^{i\left(\frac{\pi}{4}-u\right)k-\left(\frac{\pi}{2}-2u\right)\lambda_{j}}\,\psi_{f}\left(ie^{-2iu},z\right)\right\} \right).\label{formula for Qf(u)}
\end{align}
Using Lemma \ref{lemma 4.3} together with the estimate for $a_{2\ell}$ (\ref{bound for this guy}),
we can bound uniformly the previous series with respect to $u$. Indeed,
since the function
\[
g(u)=e^{i\left(\frac{\pi}{4}-u\right)k}\,\sum_{j\neq0}c_{j}\,e^{-\left(\frac{\pi}{2}-2u\right)\lambda_{j}}
\]
is analytic and $\left\Vert g(u)\right\Vert _{L^{\infty}(C_{0}(1))}\leq\mathcal{M}$,
with $\mathcal{M}$ only depending on the sequence $\sum_{j\neq0}|c_{j}|$
(see (\ref{bound for the h alpha Linfinity}) above), it follows from 
(\ref{bound derivative with analytic function cussp form case}) that
\begin{equation}
\left|\frac{d^{2\ell}}{du^{2\ell}}\left\{ g(u)\,\psi_{f}\left(ie^{-2iu},z\right)\right\} \right|<\mathcal{D}\frac{2^{14\ell}(2\ell)!e^{-\frac{A}{u}}}{u^{k+2\ell}},\label{estimate with respect to derivative lemma 2.3}
\end{equation}
for some constant $\mathcal{D}$ depending on the weight of the cusp
form and on the sequences $\left(c_{j}\right)_{j\in\mathbb{N}}$ and
on an upper bound for the sequence $\left(\lambda_{j}\right)_{j\in\mathbb{N}}$.
Thus, returning to (\ref{formula for Qf(u)}) and invoking (\ref{bound for this guy})
\begin{align}
|Q_{f}(u)| & <2\pi\mathcal{D}e^{\frac{z^{2}}{4}}\frac{e^{-A/u}}{u^{k}}\,\sum_{\ell=0}^{\infty}|a_{2\ell}|\frac{2^{12\ell}(2\ell)!}{u^{2\ell}}\leq2\pi\mathscr{C}e^{\frac{z^{2}}{4}}\frac{e^{-A/u}}{u^{k}}\sum_{\ell=0}^{\infty}\,\frac{2^{14\ell+1}\pi^{4\ell}(2\ell)!}{h^{2\ell}(4\ell+2)!}\cdot\frac{1}{u^{2\ell}}\nonumber \\
 & =\mathscr{C}\pi^{\frac{3}{2}}e^{\frac{z^{2}}{4}}\frac{e^{-A/u}}{u^{k}}\,\sum_{\ell=0}^{\infty}\frac{2^{10\ell}\pi^{4\ell}}{\Gamma\left(2\ell+\frac{3}{2}\right)\left(2\ell+1\right)\left(hu\right)^{2\ell}}<\mathscr{C}\pi^{\frac{3}{2}}e^{\frac{z^{2}}{4}}\frac{e^{-A/u}}{u^{k}}\,\cosh\left(\frac{32\pi^{2}}{hu}\right)\nonumber \\
 & <\mathscr{C}\frac{\pi^{\frac{3}{2}}e^{\pi/36}}{u^{k}}\exp\left(-\left(A-\frac{32\pi^{2}}{h}\right)\frac{1}{u}\right),\label{Qf(u) bound at lllas alsht}
\end{align}
where we have used the fact that $|z|<\frac{1}{3}\sqrt{\pi}$, (\ref{interval z in cusp form case}). From
(\ref{Qf(u) bound at lllas alsht}), if we choose the constant $h$
in such a way that $h>\frac{32\pi^{2}}{A}$, then
\[
\lim_{u\rightarrow0^{+}}\left|Q_{f}(u)\right|<\mathscr{C}\pi^{\frac{3}{2}}e^{\pi/36}\,\lim_{u\rightarrow0^{+}}u^{-k}\,\exp\left(-\left(A-\frac{32\pi^{2}}{h}\right)\frac{1}{u}\right)=0
\]
which contradicts the fact that $|Q_{f}(u)|$ is positive decreasing.
Consequently, we have found $h>\frac{32\pi^{2}}{A}$ such that $\tau_{n}<h\,n^{2}$ for infinitely many values of $n$. This shows (\ref{explicit lower bound with explicit constant})
and so we complete the proof of our Theorem. Finally, we note that we can replace the value of $d$ in (\ref{lim sup estimate}) by an explicit constant. Using the value $A=0.03$ (calculated in the proof of Lemma \ref{lemma 4.2}) and taking $h=\frac{36\pi^{2}}{0.03}$, we find from (\ref{explicit lower bound with explicit constant}) that $d=\frac{1}{36\pi}$ works. 
\bigskip{}

In order to consider the case where $k\equiv2\mod4$, the argument
is the same with a small modification. Instead of considering $Q_{f}(u)$
we will study the similar function,
\[
P_{f}(u):=\intop_{0}^{\infty}i^{-\frac{k}{2}}\,t\,\tilde{G}_{z,f}\left(\frac{k}{2}+it\right)\varphi_{f,z}(t)\,\cosh\left(\left(\frac{\pi}{2}-2u\right)t\right)\,dt,\,\,\,\,0<u<\frac{\pi}{4},
\]
where $\varphi_{f,z}(t)$ is given by (\ref{Powert series for varphi (y)}). Invoking the second integral representation (\ref{formula odd case moments}) and using the bounds for $a_{2j}$ found in (\ref{bound for this guy}), one can prove that $|P_{f}(u)|\rightarrow0$ as $u\rightarrow0^{+}$, which assures the result. $\blacksquare$

\section{Concluding Remarks}\label{concluding remarks section}
In this paper we have presented estimates for the number of critical
zeros of the arbitrary shifted combinations
\begin{equation}
\tilde{F}_{z,\alpha}(s):=\sum_{j\neq0}c_{j}\,\eta_{\alpha}\left(s+i\lambda_{j}\right)\,\left\{ _{1}F_{1}\left(\frac{\alpha}{2}-s-i\lambda_{j};\,\frac{\alpha}{2};\,\frac{z^{2}}{4}\right)+\,_{1}F_{1}\left(\frac{\alpha}{2}-\overline{s}+i\lambda_{j};\,\frac{\alpha}{2};\,\frac{z^{2}}{4}\right)\right\} ,\label{conclusion combination}
\end{equation}
\begin{equation}
\tilde{G}_{z,f}(s):=\sum_{j\neq0}c_{j}\,\eta_{f}\left(s+i\lambda_{j}\right)\,\left\{ _{1}F_{1}\left(k-s-i\lambda_{j};\,k;\,\frac{z^{2}}{4}\right)+\,_{1}F_{1}\left(k-\overline{s}+i\lambda_{j};\,k;\,\frac{z^{2}}{4}\right)\right\} .\label{conclusion combination cusp}
\end{equation}
The proofs of both estimates relied on the summation formulas (\ref{final formula for 1f1 theorem-1})
and (\ref{summation formula cusp forms lemma}). One may wonder how
to extend our results in different directions. For example, one may
ask if we can relax the class of hypergeometric functions appearing
in the combinations (\ref{conclusion combination}) and (\ref{conclusion combination cusp}).
Note that the second parameter of the hypergeometric functions above
depends on the Dirichlet series that preceeds them: in the first case,
it is equal to $\alpha/2$ and in the second is $k$. Therefore, one
may consider the study of the critical zeros of the functions,
\begin{equation}
\tilde{F}_{z,\alpha}(s;\nu):=\sum_{j\neq0}c_{j}\,\eta_{\alpha}\left(s+i\lambda_{j}\right)\,\left\{ _{1}F_{1}\left(\frac{\alpha}{2}-s-i\lambda_{j};\,\nu+1;\,\frac{z^{2}}{4}\right)+\,_{1}F_{1}\left(\frac{\alpha}{2}-\overline{s}+i\lambda_{j};\,\nu+1;\,\frac{z^{2}}{4}\right)\right\} ,\label{conclusion combination-1}
\end{equation}
\begin{equation}
\tilde{G}_{z,f}(s;\nu):=\sum_{j\neq0}c_{j}\,\eta_{f}\left(s+i\lambda_{j}\right)\,\left\{ _{1}F_{1}\left(k-s-i\lambda_{j};\,\nu+1;\,\frac{z^{2}}{4}\right)+\,_{1}F_{1}\left(k-\overline{s}+i\lambda_{j};\,\nu+1;\,\frac{z^{2}}{4}\right)\right\} ,\label{conclusion combination cusp-1}
\end{equation}
where $\nu$ is now a real parameter independent of $\alpha$ (in
the first case) and of $k$ in the second.
We remark that it is indeed possible to get analogues of Theorems \ref{theorem 1.1} and \ref{theorem 1.2} to the shifted combinations (\ref{conclusion combination-1})
and (\ref{conclusion combination cusp-1}) provided $\nu\geq\frac{\alpha}{2}-1$
in the first case and $\nu\geq k-1$ in the second. However, instead
of using the summation formulas (\ref{final formula for 1f1 theorem-1})
and (\ref{summation formula cusp forms lemma}), we will need more
general transformations. Indeed, it is not difficult to show\footnote{The proof of this summation formula is quite standard and it follows
the same steps as the ones given in {[}\cite{RYCE}, pp. 18-19{]}. Although
our formulas (\ref{first identity humber zeta alpha}) and (\ref{second identity humbert cusp forms})
do not have an intrinsic interest themselves, they can be used to
establish new summation formulas for $\sum_{n=1}^{\infty}r_{k}(n)\,n^{\frac{\nu-\mu}{2}}\,I_{\mu}\left(Y\sqrt{n}\right)\,K_{\nu}\left(X\sqrt{n}\right)$,
where $X>Y$ and $\text{Re}(\mu),\text{Re}(\nu)>0$. Such a formula
will be given in a forthcoming investigation and it constitutes a
generalization of a formula of Berndt, Dixit, Kim and Zaharescu \cite{sum_of_squares_berndt}.} that, when $\text{Re}(\nu)>-1$ and $\text{Re}(x)>0$, $y\in\mathbb{C}$,
the following summation formula takes place
\begin{align}
\sum_{n=1}^{\infty}(-1)^{n}r_{\alpha}(n)\,n^{-\nu/2}\,e^{-\pi nx}\,J_{\nu}(\sqrt{\pi n}y)=-\frac{y^{\nu}\pi^{\nu/2}}{2^{\nu}\Gamma(\nu+1)}\nonumber\\
+\frac{\pi^{\nu/2}y^{\nu}e^{-\frac{y^{2}}{4x}}}{2^{\nu}\Gamma(\nu+1)x^{\alpha/2}}\,\sum_{n=1}^{\infty}\tilde{r}_{\alpha}(n)e^{-\frac{\pi}{x}\left(n+\frac{\alpha}{4}\right)}\,\Phi_{3}\left(1-\frac{\alpha}{2}+\nu;\nu+1;\frac{y^{2}}{4x},\frac{\pi y^{2}}{4x^{2}}\left(n+\frac{\alpha}{4}\right)\right),\label{first identity humber zeta alpha}
\end{align}
where $\Phi_{3}(b;c;w,z)$ is the usual Humbert function,
\begin{equation}
\Phi_{3}(b;c;\,w,z)=\sum_{k,m=0}^{\infty}\frac{(b)_{k}}{(c)_{k+m}}\frac{w^{k}z^{m}}{k!\,m!},\label{definition humbert}
\end{equation}
whose series converges absolutely for any $w,z\in\mathbb{C}$. By using (\ref{first identity humber zeta alpha}) instead of (\ref{final formula for 1f1 theorem-1}),
we can establish a lower bound for the number of critical zeros of
the function (\ref{conclusion combination-1}). Analogously, Theorem \ref{theorem 1.2} can be extended to shifted combinations of the form (\ref{conclusion combination cusp-1})
by using the transformation formula
\begin{equation}
\sum_{n=1}^{\infty}a_{f}(n)\,n^{-\frac{\nu}{2}}\,e^{-2\pi nx}\,J_{\nu}\left(\sqrt{2\pi n}\,y\right)=\frac{(-1)^{k/2}\pi^{\nu/2}y^{\nu}x^{-k}}{2^{\nu/2}\Gamma(\nu+1)}e^{-\frac{y^{2}}{4x}}\,\sum_{n=1}^{\infty}a_{f}(n)e^{-\frac{2\pi n}{x}}\,\Phi_{3}\left(1-k+\nu;\nu+1;\frac{y^{2}}{4x},\frac{\pi y^{2}n}{2x^{2}}\right).\label{second identity humbert cusp forms}
\end{equation}

Note that (\ref{first identity humber zeta alpha}) and (\ref{second identity humbert cusp forms})
reduce to (\ref{final formula for 1f1 theorem-1}) and (\ref{summation formula cusp forms lemma})
when $\nu=\frac{\alpha}{2}-1$ and $\nu=k-1$ respectively. This is
the case due to the well-known reduction formula for Humbert's function
\[
\Phi_{3}\left(0;c;w,z\right)=\Gamma(c)\,z^{-\frac{c-1}{2}}I_{c-1}\left(2\sqrt{z}\right),
\]
which can be immediately established from the power series (\ref{definition humbert}).
Analogous theorems can be established when the combination of confluent
hypergeometric functions $_{1}F_{1}\left(a;c;z\right)$ is replaced
by $_{2}F_{2}\left(a,b;c,d;z\right)$ and Gauss' hypergeometric function
$_{2}F_{1}(a,b;c;z)$.

\bigskip{}

Another direction that this work can take concerns even more general
shifted combinations of completed Dirichlet series. To continue, recall
again that $\eta(s):=\pi^{-s/2}\Gamma(s/2)\,\zeta(s)$. In the paper
\cite{DKMZ}, the authors have posed the following interesting problem:
if one considers a combination of shifted products of the form
\begin{equation}
H_{z}(s):=\sum_{m,n=1}^{\infty}c_{m,n}\,\eta\left(s+i\lambda_{m}\right)\,\eta\left(s+i\lambda_{n}\right)\,\text{Re}\left(_{1}F_{1}\left(\frac{1-s-i\lambda_{m}}{2};\,\frac{1}{2};\,\frac{z^{2}}{4}\right)\right)\,\text{Re}\left(_{1}F_{1}\left(\frac{1-s-i\lambda_{n}}{2};\,\frac{1}{2};\,\frac{z^{2}}{4}\right)\right),\label{quadratic form in combination}
\end{equation}
under what circumstances will $H_{z}(s)$ have infinitely many zeros
on the critical line? Even when restrict this problem to $z=0$, it
seems complicated to formulate a result covering this case. Indeed,
consider the case where $c_{m,n}:=d_{m}\,\delta_{m,n}$, $d_{m}\geq0$
and $z=0$: then we are dealing with the study of the zeros of the function
\begin{equation}
\mathcal{H}(s)=\sum_{m=1}^{\infty}d_{m}\,\eta^{2}\left(s+i\lambda_{m}\right),\,\,\,\,d_{m}\geq0.\label{H(s) positivity consideration}
\end{equation}
It is simple to give an example where $\mathcal{H}(s)$ does not have
a single zero at the critical line $\text{Re}(s)=\frac{1}{2}$. For
some sufficiently large $M$, consider the finite sequence $\lambda_{m}=m,\,\,\,1\leq m\leq M$
and assume that $d_{j}=0$ when $m\geq M+1$. If $\mathcal{H}(s)=\sum_{1\leq m\leq M}d_{m}\,\eta^{2}\left(s+i\,m\right)$
has a zero on the critical line, say $s=\frac{1}{2}+i\tau$, then
due to the non-negativity of $d_{m}$, we have that 
\begin{equation}
\eta\left(\frac{1}{2}+i(\tau+m)\right)=0,\,\,\,\,\,\text{for }1\leq m\leq M.\label{condition periodic subsequence zeros}
\end{equation}
But this is impossible for $N$ large enough because every arithmetical
sequence contains infinitely many elements which are not zeros of
$\zeta(z)$.\footnote{This is a beautiful observation (with a simple proof) made by Putnam \cite{putnam_I, putnam_II}. Note that, if we pick $\lambda_{m}:=\frac{2\pi m}{\log(2)}$,
then our reasoning would also contradict a very interesting result
due to Steuding and Wegert \cite{steuding_wegert}, which states that
\[
\frac{1}{M}\sum_{0\leq m<M}\zeta\left(\frac{1}{2}+i\tau+i\frac{2\pi m}{\log(2)}\right)=\frac{1}{1-2^{-\frac{1}{2}-i\tau}}+O\left(\frac{\log(M)}{\sqrt{M}}\right),\,\,\,\,M\rightarrow\infty.
\]
} Therefore, it seems difficult to assure a theorem of the form given
here for combinations of shifted products. 
However, one may conjecture that a combination of the form (\ref{H(s) positivity consideration})
has infinitely critical zeros for ``infinitely many'' pairs of bounded
shifts.

\begin{conjecture}
Let $(c_{j,k})_{j,k\in\mathbb{\mathbb{N}}}$ be a double sequence
of non-zero real numbers such that $\sum_{j,k=1}^{\infty}\,|c_{j,k}|<\infty$
and $(\lambda_{j})_{j\in\mathbb{N}}$, $(\lambda_{k}^{\prime})_{k\in\mathbb{N}}$
be two bounded sequences of distinct real numbers that attain their
bounds. Then there are infinitely many $\tau\neq0$ such that the function
\[
G(s;\,\tau):=\sum_{j,k=1}^{\infty}c_{j,k}\,\eta_{\alpha}\left(s+i\lambda_{j}\right)\,\eta_{\alpha}\left(s+i\tau+i\lambda_{k}^{\prime}\right)
\]
has infinitely many zeros at the critical line $\text{Re}(s)=\frac{\alpha}{4}$.
\end{conjecture}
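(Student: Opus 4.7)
The plan is to attempt an extension of the Fekete--Fej\'er method of Theorem \ref{theorem 1.1} from shifted sums to shifted products, combined with an averaging argument in the variable $\tau$. The first observation is that whenever $\tau\in\mathbb{R}$, the function $G(\alpha/4+it;\tau)$ is real-valued: each factor $\eta_\alpha\bigl(\alpha/4+i(t+\lambda_j)\bigr)$ and $\eta_\alpha\bigl(\alpha/4+i(t+\tau+\lambda_k')\bigr)$ is real by the functional equation (\ref{completed Dirichlet series definition in lemmata section}), so it suffices to force sign changes in $t$.

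The second step is to derive a two-variable analogue of the integral representation (\ref{equation in the setting of zeta alpha}). Starting from (\ref{equation in the setting of zeta alpha}) with $z=0$ and applying Parseval on the line $\mathrm{Re}(s)=\alpha/4$, the product $\eta_\alpha(\alpha/4+iu)\eta_\alpha(\alpha/4+iv)$ can be recognised as the Mellin transform in two variables of a kernel built from $\psi_\alpha$; after inserting the weight $t^{2p}\cosh(2\omega t)$ and summing against $c_{j,k}$, one should obtain a representation of
\[
\mathcal{I}_{2p}(\omega;\tau):=\int_0^\infty t^{2p}\,G(\alpha/4+it;\tau)\,\cosh(2\omega t)\,dt
\]
as a finite sum of "arithmetic" main terms (produced by the poles of $\eta_\alpha$ at $s=0,\alpha/2$ interacting with the shifts) plus an "analytic" remainder given by derivatives of a product of two generalised theta functions evaluated at $e^{2i\omega}$. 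The main terms will take the shape
\[
\sum_{j,k}c_{j,k}\,r_j^{2p}r_k'(\tau)^{2p}\,\mathcal{G}_{p}\bigl(\theta_j,\theta_k'(\tau)\bigr),
\]
where $(r_k'(\tau),\theta_k'(\tau))$ are the polar coordinates of $i\alpha/2-2\tau-2\lambda_k'$ in analogy with (\ref{polar coordinates}).

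The third step is the sign-change analysis. For each fixed $\tau$, select the pair $(M,M'(\tau))$ that jointly maximises $r_jr_k'(\tau)$; a two-dimensional version of Claim \ref{claim 3.1} should produce a sequence $(q_n)$ along which $\mathcal{G}_{q_n}\bigl(\theta_M,\theta_{M'(\tau)}'(\tau)\bigr)$ alternates in sign, provided the pair $(\theta_M,\theta_{M'(\tau)}'(\tau))$ satisfies a Kronecker-type condition (the relevant orbit is dense in a two-torus). The remainder estimates from Lemma \ref{lemma 2.2} and Lemma \ref{lemma 2.3}, applied to the product $\psi_\alpha\bigl(ie^{-2iu},0\bigr)^2$, should survive essentially unchanged because the product decays like $e^{-2A/u}u^{-\alpha-2k}$, which is still dominated by the power of $r_Mr_{M'(\tau)}'$ along $(q_n)$.

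The main obstacle, and the reason this is phrased only as a conjecture, is that the joint-maximum pair can degenerate or the Kronecker condition on $(\theta_M,\theta_{M'(\tau)}'(\tau))$ can fail on a thin set of $\tau$, and cancellation between different pairs $(j,k)$ can destroy the dominance of the main term. The proposed way to cope with this is an averaging argument: show that the set of $\tau\in\mathbb{R}$ for which the dominance and equidistribution conditions fail is contained in a countable union of algebraic varieties (and hence has measure zero), and then conclude that the complement is uncountable, yielding infinitely many admissible $\tau$. Making this rigorous will probably require input from value-distribution theory for $\eta_\alpha$ in the $\tau$-aspect (Bohr--Jessen or Voronin-type universality), which seems to be the genuinely hard ingredient.
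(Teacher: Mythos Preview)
This statement is a \emph{Conjecture} in the paper and no proof is given there; it appears in Section~\ref{concluding remarks section} as an open problem, motivated by the observation (around (\ref{H(s) positivity consideration})--(\ref{condition periodic subsequence zeros})) that the analogous question with $\tau=0$ can fail outright. There is therefore no proof in the paper to compare your proposal against.

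Your write-up is a strategy sketch rather than a proof, and you acknowledge as much. Two specific points where the sketch would need substantial work before it could become an argument. First, the ``main terms'' for the $2p$-th moment of a product $\eta_\alpha(s+i\lambda_j)\,\eta_\alpha(s+i\tau+i\lambda_k')$ do not factor as $r_j^{2p}(r_k'(\tau))^{2p}$ times a bounded oscillating quantity. In the single-sum case of Lemma~\ref{lemma 2.4} the main term comes from the residue of a single $\eta_\alpha$, and the factor $r_j^{2p}$ arises from differentiating $e^{(\frac{i\alpha}{2}-2\lambda_j)\omega}$; for a product one picks up residues from each factor separately and the dependence on $p$ is governed by a \emph{sum} of the two exponents rather than a product, so the dominance analysis you propose (selecting the pair maximising $r_j r_k'(\tau)$) does not match the actual shape of the expression. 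Second, the orbit $\{(2p\,\theta_M,\,2p\,\theta_{M'(\tau)}')\bmod 2\pi\}_{p\in\mathbb{N}}$ is a one-parameter subgroup of the two-torus, since the same integer $p$ drives both coordinates; density is controlled by the single ratio $\theta_M/\theta_{M'(\tau)}'$, not by a genuinely two-dimensional Kronecker condition, and even granting density, extracting a sequence along which a two-variable analogue of $\mathcal{G}_p$ alternates in sign is not covered by the interval-partition argument of Claim~\ref{claim 3.1}. The averaging-in-$\tau$ idea at the end is reasonable in spirit, but as you note, the value-distribution input it would require is itself an open-ended problem.
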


\bigskip{}

Finally, we should also mention a case where a zero counting problem
of the form considered in this paper has a better estimate than those
given by our Theorems. Under the conditions of Theorem \ref{theorem 1.1},
one may consider the problem of counting the zeros of the function
\begin{equation}
\mathcal{Z}\left(t\right):=\sum_{j=1}^{\text{\ensuremath{\infty}}}c_{j}\,Z\left(t+\lambda_{j}\right),\label{combinations Hardy Z function}
\end{equation}
where $Z(t)$ represents Hardy's $Z-$function \cite{ivic_hardyz},
\begin{equation}
Z(t):=\zeta\left(\frac{1}{2}+it\right)\left(\chi\left(\frac{1}{2}+it\right)\right)^{-1/2},\,\,\,\,\chi(s):=2^{s}\pi^{s-1}\sin\left(\frac{\pi s}{2}\right)\Gamma(1-s).\label{Definition Hardy Z function}
\end{equation}
Building on Ingham's work \cite{ingham} and following an approach very
similar to Atkinson's \cite{atkinson}, Hall {[}\cite{hall}, p. 103,
Theorem 5{]} deduced a curious integral formula involving the product
of $\mathcal{Z}(t)$ with a shifted version of itself. He proved the
asymptotic formula
\begin{equation}
\intop_{0}^{T}\mathcal{Z}\left(t\right)\mathcal{Z}\left(t+\frac{\beta}{\log(T)}\right)\,dt=\sum_{j=1}^{\infty}c_{j}^{2}\cdot\frac{\sin(\beta/2)}{\beta/2}\,T\log(T)+O(T),\label{formula of Hall similar to Inghams}
\end{equation}
where $0<\beta<1$. A formula like (\ref{formula of Hall similar to Inghams})
was derived for the first time by Atkinson \cite{atkinson} \footnote{The reader may check {[}{[}\cite{ivic_hardyz}{]}, p. 121{]} to see
the differences between the results of Hall and Atkinson.}and it was used by him to give a very short proof that the Riemann
zeta function has $\gg T/\log(T)$ critical zeros. Under the assumption
that $\sum c_{j}^{2}<\infty$ (which is actually implied by our condition
$\sum|c_{j}|<\infty$), Hall proved the analogous result to a shifted
combination of the form (\ref{combinations Hardy Z function}), establishing
that
\begin{equation}
\mathcal{N}_{0}(T)\gg\frac{T}{\log(T)},\label{estimate atkinson type}
\end{equation}
where $\mathcal{N}_{0}(T)$ denotes the number of the zeros of the form $s=\frac{1}{2}+it,\,\,0<t<T$
of the function $\mathcal{Z}\left(t\right)$.
Although the function (\ref{combinations Hardy Z function}) is very
different from the completed Dirichlet series $\eta(s)$, it is not
unreasonable to conjecture that lower bounds like (\ref{estimate atkinson type})
hold true in the setting of shifted combinations of our class of completed
Dirichlet series.

We state a conjecture which, if proven, would drastically improve on our Theorem \ref{theorem 1.1} when $z=0$. 
\begin{conjecture}
Assume the conditions of Theorem \ref{theorem 1.1} (with $z:=0$) and let $\mathcal{N}_{\alpha}(T)$
denote the number of zeros written in the form $s=\frac{\alpha}{4}+it,\,0\leq t\leq T$,
of the function
\[
\tilde{F}_{\alpha}(s):=\sum_{j\neq0}c_{j}\,\pi^{-(s+i\lambda_{j})}\Gamma\left(s+i\lambda_{j}\right)\,\zeta_{\alpha}\left(s+i\lambda_{j}\right).
\]
Then there exists some $c>0$ such that
\begin{equation}
\liminf_{T\rightarrow\infty}\,\frac{\mathcal{N}_{\alpha}(T)}{T}\geq c.\label{lim inf zeta alpha in general}
\end{equation}
\end{conjecture}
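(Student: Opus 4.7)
The plan is to imitate the Atkinson--Ingham--Hall philosophy underlying (\ref{estimate atkinson type}) and adapt it to shifted combinations of $\eta_{\alpha}$. Using the functional equation (\ref{completed Dirichlet series definition}) and Assumption \ref{assumption 1}, I would first multiply $\tilde{F}_{\alpha}(\alpha/4+it)$ by the non-vanishing normaliser $e^{\pi|t|/2}$ to produce a real-valued, even ``Hardy-type'' function $\mathcal{Z}_{\alpha}(t)$ of polynomial growth whose real zeros coincide with the critical zeros of $\tilde{F}_{\alpha}$; this reduces the problem to counting sign changes of $\mathcal{Z}_{\alpha}$ on $[0,T]$.

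The heart of the approach would be to establish, for some growth exponent $\kappa=\kappa(\alpha)$ and some $\delta>0$, an analogue of the Ingham--Atkinson mean value
\begin{equation*}
\int_{0}^{T}\mathcal{Z}_{\alpha}(t)^{2}\,dt \;=\; C_{\mathbf{c}}\,T^{\kappa} + O\bigl(T^{\kappa-\delta}\bigr),
\end{equation*}
together with a Hall-type shifted counterpart
\begin{equation*}
\int_{0}^{T}\mathcal{Z}_{\alpha}(t)\,\mathcal{Z}_{\alpha}\!\left(t+\tfrac{\beta}{\log T}\right)\,dt \;=\; C_{\mathbf{c}}\,\phi(\beta)\,T^{\kappa} + O\bigl(T^{\kappa-\delta}\bigr),
\end{equation*}
where $C_{\mathbf{c}}>0$ depends on the weights $(c_{j})$ and $\phi$ is continuous with $\phi(0)=1$ and $0\leq\phi(\beta)<1$ for $\beta>0$. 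I would derive both by applying an approximate functional equation for $\zeta_{\alpha}(s)$, followed by a diagonal/off-diagonal decomposition in which the $\theta^{\alpha}$ summation formula (\ref{making point summation formula}) plays the role of the Kloosterman-sum cancellation that handles $\zeta^{2}$.

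Granted these asymptotics, the inequality $\mathcal{N}_{\alpha}(T)\gg T$ would follow by a classical Atkinson--Hall sign-change argument. Supposing towards contradiction that $\mathcal{N}_{\alpha}(T)<\varepsilon T$ along a sequence $T\to\infty$, the exceptional set $E_{h}\subset[0,T]$ on which $\mathcal{Z}_{\alpha}(t)$ and $\mathcal{Z}_{\alpha}(t+h)$ have opposite sign, with $h=\beta/\log T$, has measure at most $h\,\mathcal{N}_{\alpha}(T)=O(\varepsilon T/\log T)$. A Selberg-type short-interval upper bound for $\int_{I}\mathcal{Z}_{\alpha}^{2}$, combined with Cauchy--Schwarz on $E_{h}$, then forces
\begin{equation*}
\int_{0}^{T}\mathcal{Z}_{\alpha}(t)\,\mathcal{Z}_{\alpha}(t+h)\,dt \;=\; \int_{0}^{T}\mathcal{Z}_{\alpha}(t)^{2}\,dt + o\bigl(T^{\kappa}\bigr)
\end{equation*}
as $\varepsilon\to 0$, which is incompatible with $\phi(\beta)<1$ for a fixed $\beta$ of order one.

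The principal obstacle is the derivation of the two mean-value asymptotics themselves. For non-integral $\alpha$ the coefficients $r_{\alpha}(n)$ lack any multiplicative structure, so the standard Atkinson dissection of $\int|\zeta_{\alpha}|^{2}$ cannot be carried through via Kloosterman-sum cancellation as for $\zeta^{2}$; one would instead need to exploit (\ref{making point summation formula}) and uniform bounds on Bessel functions in the spirit of Lemma \ref{lemma 2.2}. Pinning down the correct value of $\kappa(\alpha)$, verifying that $C_{\mathbf{c}}$ is a positive-definite quadratic form in the weights $(c_{j})$, and extracting a power saving in the error term of the shifted integral --- especially in the range $\alpha\leq 2$, where the classical case for $\zeta$ yields only $T/\log T$ and a genuinely different input (such as a Selberg-type mollified second moment) seems required --- appear to be the chief technical hurdles.
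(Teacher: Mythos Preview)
The statement you are attempting to prove is a \emph{conjecture} in the paper, not a theorem: the paper offers no proof and explicitly presents it as an open problem whose resolution ``would drastically improve on our Theorem~\ref{theorem 1.1}''. There is therefore no proof in the paper to compare your proposal against.

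Your outline is a reasonable research plan in the Atkinson--Hall spirit, but it is not a proof, and the gaps you yourself flag are precisely why the statement remains conjectural. The two mean-value asymptotics you postulate are not known for $\zeta_{\alpha}$ with non-integral $\alpha$: the coefficients $r_{\alpha}(n)$ have no multiplicative structure, so neither the Atkinson dissection nor Kloosterman-type cancellation is available, and the summation formula (\ref{making point summation formula}) does not by itself supply the required off-diagonal saving. The positivity of the constant $C_{\mathbf{c}}$ as a quadratic form in the $c_{j}$ is also unproven in this setting. Moreover, even for $\zeta(s)$ the Atkinson--Hall argument with shift $h\asymp 1/\log T$ yields only $\gg T/\log T$ (this is exactly the content of (\ref{estimate atkinson type}) quoted in the paper), not the $\gg T$ demanded by the conjecture; reaching $\gg T$ would require a genuinely different input such as a mollified second moment, which you acknowledge but do not supply. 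Until those ingredients are established, the proposal remains a heuristic strategy rather than a proof.
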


\bigskip{}

As it can be seen from the 4-square Theorem,
\[
\zeta_{4}(s)=8(1-2^{2-2s})\,\zeta(s)\,\zeta(s-1),
\]
the estimate (\ref{lim inf zeta alpha in general}) cannot be in general improved to $\mathcal{N}_{\alpha}(T)\gg T\,\log(T)$.
By Siegel's result \cite{Siegel_Contributions}, whenever $\alpha\in\mathbb{N}_{\geq4}$, $\zeta_{\alpha}\left(s\right)$
has $\asymp T$ zeros of the form $s=\frac{\alpha}{4}+it$, $0\leq t\leq T$
so, again, (\ref{lim inf zeta alpha in general}) seems to be the
best estimate that one may be able to prove. In any case, it can be
conjectured that, in the situation of $\zeta(s)$, one has the estimate
\begin{equation}
\liminf_{T\rightarrow\infty}\,\frac{\mathcal{N}_{1}(T)}{T\log(T)}\geq c,\label{positive proportion zeros combinations}
\end{equation}
which could act as an extension of Selberg's famous result \cite{selberg_zeros}
in a different direction than \cite{selberg_class}.

Analogously, we may formulate the following conjecture for holomorphic
cusp forms which, if true, would constitute a massive generalization
of Hafner's result \cite{Hafner_cusp forms} in a different setting than Selberg's result adapted to combinations of degree 2 $L-$ functions \cite{rezvyakova}.
\begin{conjecture}
Assume the conditions of Theorem \ref{theorem 1.2} (with $z:=0$) and let $\mathcal{N}_{f}(T)$
denote the number of zeros written in the form $s=\frac{k}{2}+it,\,\,\,0\leq t\leq T$,
of the function
\[
\tilde{G}_{f}(s):=\sum_{j\neq0}c_{j}\,\left(2\pi\right)^{-s-i\lambda_{j}}\Gamma(s+i\lambda_{j})\,L\left(s+i\lambda_{j},f\right).
\]
Then there exists some $d>0$ such that
\[
\liminf_{T\rightarrow\infty}\,\frac{N_{f,z}(T)}{T\log(T)}\geq d.
\]
\end{conjecture}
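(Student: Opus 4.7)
The natural plan is to adapt Hafner's proof that $L(s,f)$ has $\gg T\log T$ critical zeros to the setting of a shifted combination, following the strategy outlined in the concluding remarks (Hall's extension of Atkinson's method to combinations of Hardy's $Z$-function), but strengthened by a Selberg-type mollifier. First, I would set up a Hardy-style analogue: define $Z_f(t) := i^{-k/2}(2\pi)^{-k/2-it}\Gamma(k/2+it)L(k/2+it,f)$, which is a real-valued function of $t$, and use the symmetry assumptions $c_{-j}=c_j,\lambda_{-j}=-\lambda_j$ together with Hecke's functional equation (\ref{functional equation cusp form}) to rewrite $i^{-k/2}\tilde{G}_f(k/2+it)$ as (a real constant multiple of) $\mathcal{Z}_f(t):=\sum_{j\neq 0}c_j Z_f(t+\lambda_j)$ when $k\equiv 0\!\!\mod 4$, with an analogous odd version for $k\equiv 2\!\!\mod 4$. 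The zero-counting problem for $\tilde{G}_f$ on $\operatorname{Re}(s)=k/2$ thereby reduces to counting sign changes of a real continuous function $\mathcal{Z}_f$ on $[0,T]$.

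Next, I would introduce a Selberg mollifier $\psi_X(s):=\sum_{n\leq X}\mu_f(n)P\!\left(\tfrac{\log(X/n)}{\log X}\right)n^{-s}$, where $\mu_f(n)$ are the coefficients of $1/L(s,f)$, $P$ is a suitable polynomial with $P(0)=0,P(1)=1$, and $X=T^\theta$ for some admissible $\theta\in(0,1/2)$. Writing $\Psi_j(t):=\psi_X(k/2+it+i\lambda_j)$, I would introduce the weighted combination $\Phi(t):=\sum_{j\neq 0}c_j Z_f(t+\lambda_j)\,\overline{\Psi_j(t)}$ (or a symmetrised real version thereof). The strategy is then the classical Selberg one: obtain a lower bound $\int_0^T \Phi(t)^2\,dt\gg T\log T$ from the mollified second moment, combined with an upper bound $\int_0^T \Phi(t)^4\,dt\ll T\log^A T$ for some bounded $A$, and deduce via Cauchy–Schwarz and the Selberg sign-change argument (as in Hafner \cite{Hafner_cusp forms} and Rezvyakova \cite{rezvyakova}) that $\mathcal{N}_f(T)\gg T\log T$.

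The second-moment step should be tractable: the diagonal terms $\sum_j c_j^2\int_0^T|Z_f(t+\lambda_j)\Psi_j(t)|^2\,dt$ contribute the desired $(\sum c_j^2)\cdot T\log T$ by Hafner's mollified second-moment computation for $L(s,f)$ applied to each shift, while off-diagonal terms $c_jc_k\int_0^T Z_f(t+\lambda_j)Z_f(t+\lambda_k)\overline{\Psi_j\Psi_k}\,dt$ with $j\neq k$ should be absorbed into the error via a Hall–Atkinson type asymptotic analogous to (\ref{formula of Hall similar to Inghams}), provided $\lambda_j-\lambda_k$ stays bounded; the shifted-Dirichlet-series machinery of \cite{RYCE} together with an Atkinson-type dissection of the approximate functional equation should suffice.

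The main obstacle, unquestionably, will be the mollified fourth moment of the shifted combination, which reduces to controlling
\[
\sum_{j_1,j_2,j_3,j_4}c_{j_1}c_{j_2}c_{j_3}c_{j_4}\int_0^T Z_f(t+\lambda_{j_1})Z_f(t+\lambda_{j_2})Z_f(t+\lambda_{j_3})Z_f(t+\lambda_{j_4})\,|\Psi(t)|^4\,dt.
\]
Even for a single cusp-form $L$-function, the fourth moment $\int_0^T|L(k/2+it,f)|^4\,dt$ requires Good's theorem and deep spectral input, and its mollified version is delicate. For arbitrary bounded shifts one is led to shifted convolution sums $\sum_{n_1n_2=n_3n_4+h}a_f(n_1)a_f(n_2)\overline{a_f(n_3)a_f(n_4)}$ twisted by additional oscillating exponentials in the $\lambda_j$, and obtaining bounds uniform in the $\lambda_j$ with power savings is the heart of the difficulty. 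A realistic route might be to invoke Kuznetsov's formula and spectral large-sieve inequalities in the style of Deshouillers–Iwaniec, combined with the Voronoi summation formula for $a_f(n)$; however, making all the shift-dependent oscillations cooperate so as to leave the $T\log T$ main term intact seems to require a genuine new idea beyond what is presently available in the literature, which is why we stated the estimate only as a conjecture.
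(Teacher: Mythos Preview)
The statement you are addressing is a \emph{conjecture} in the paper's concluding remarks; the paper does not prove it and does not claim to. There is therefore no proof in the paper to compare your proposal against. Your outline is a plausible Selberg--Hafner-style strategy, and you yourself correctly identify that the mollified fourth moment with arbitrary bounded shifts is the essential obstruction and that overcoming it ``seems to require a genuine new idea beyond what is presently available in the literature.'' That diagnosis is consistent with the paper's own stance: the author presents the estimate precisely as an open problem analogous to extending Selberg's positive-proportion theorem to shifted combinations, citing Hafner \cite{Hafner_cusp forms} and Rezvyakova \cite{rezvyakova} as the relevant benchmarks but offering no argument.

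In short, your proposal is not a proof, you do not claim it is, and neither does the paper; both agree the statement is conjectural.
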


\bigskip{}

\textit{Acknowledgements:} This work was partially supported by CMUP, member of LASI, which is financed by national funds through FCT - Fundação para a Ciência e a Tecnologia, I.P., under the projects with reference UIDB/00144/2020 and UIDP/00144/2020. We also acknowledge the support from FCT (Portugal) through the PhD scholarship 2020.07359.BD. The author would like to thank to Semyon Yakubovich for unwavering support and guidance throughout the writing of this paper.
\footnotesize

\end{document}